\newcommand{\N}{\mathbb{N}}
\renewcommand{\Pr}{\mathbb{P}}
\newcommand{\R}{\mathbb{R}}
\newcommand{\Bij}{B_i^j}
\newcommand{\Cij}{C_i^j}
\newcommand{\Tij}{T_i^j}
\newcommand{\sij}{s_i^j}
\newcommand{\uij}{u_i^j}
\newcommand{\xij}{x_i^j}
\newcommand{\covering}{\mathcal{C}}
\newcommand{\CC}{\mathcal{C}}
\newcommand{\HH}{\mathcal{H}}
\newcommand{\PP}{\mathcal{P}}
\newcommand{\Q}{\mathcal{P}}
\newcommand{\QQ}{\mathcal{P}}
\newcommand{\graphs}{\mathcal{U}}
\newcommand{\comp}{\mathcal{CG}}
\newcommand{\CI}{\mathcal{CVX}}
\newcommand{\DC}{\mathcal{DC}}
\newcommand{\incomp}{\mathcal{ICG}}
\renewcommand{\NG}{\mathcal{NG}}
\newcommand{\OSG}{\mathcal{OSG}}
\newcommand{\SG}{\mathcal{SG}}
\newcommand{\abs}[1]{\lvert #1 \rvert}
\newcommand{\deltacut}{\delta_{\square}}
\newcommand{\limitsx}[1]{\widehat{#1}}
\newcommand{\maxlimits}[1]{\widehat{#1}^*}
\renewcommand\P{\mathcal{P}}
\DeclareMathOperator{\col}{col}
\DeclareMathOperator{\Ent}{Ent}
\DeclareMathOperator{\Forb}{Forb}
\numberwithin{equation}{section}
\newtheorem{theorem}{Theorem}[section]
\newtheorem{lemma}[theorem]{Lemma}
\newtheorem{conjecture}[theorem]{Conjecture}
\theoremstyle{definition}
\newtheorem{definition}[theorem]{Definition}
\theoremstyle{remark}
\newtheorem{remark}[theorem]{Remark}
\newenvironment{romenumerate}[1][0pt]{% optional argument changes indentation
\addtolength{\leftmargini}{#1}\begin{enumerate}% gives (i), (ii) etc.
 }{\end{enumerate}}
\newcommand\bigpar[1]{\bigl(#1\bigr)}
\newcommand\set[1]{\ensuremath{\{#1\}}}
\newcommand\bigset[1]{\ensuremath{\bigl\{#1\bigr\}}}
\newcommand\Bigset[1]{\ensuremath{\Bigl\{#1\Bigr\}}}
\newcommand\oi{[0,1]}
\newcommand\ooi{[0,1)}
\newcommand\punkt{.\spacefactor=1000}    % om problem!
\newcommand\eg{e.g\punkt}
\newcommand{\aex}{a.e\punkt}
\newcounter{jeppe}
\newcommand\Claim[1]{\refstepcounter{jeppe}%
  \smallskip\noindent\emph{Claim \textup{\arabic{jeppe}:} #1}}
\newcommand\ga{\alpha}
\newcommand\cA{\mathcal A}
\newcommand\cI{\mathcal I}
\newcommand\cP{\mathcal P}
\newcommand\tC{\tilde C}
\newcommand\gG{\Gamma}
\newcommand\bbN{\mathbb N}
\newcommand{\refT}[1]{Theorem~\ref{#1}}
\newcommand{\refL}[1]{Lemma~\ref{#1}}
\newcommand\floor[1]{\lfloor#1\rfloor}
\newcommand{\tind}{p}
\newcommand\Psiq{\Psi^*}
\newcommand\psiqgx[1]{\Psiq_{G,#1}}
\newcommand\psiqgw{\psiqgx W}
\newcommand\psiqfw{\psiqgw}
\newcommand\xxn{x_1,\dots,x_n}
\newcommand\xxm{\xxn}
\newcommand\xx{\mathbf{x}}
\newcommand\hcP{\widehat{\cP}}
\newcommand\ffree{$G$-free}
\newcommand\qij{_{ij}}
\newcommand\QE{E}
\newcommand\setoi{\set{0,1}}
\newcommand{\tend}{\longrightarrow}
\newcommand\pto{\overset{\mathrm{p}}{\tend}}
\newcommand\dD{\partial D}
\xdef\klockan{\the\count1.0\the\count255}
\xdef\klockan{\the\count1.\the\count255}\fi
\title{On String Graph Limits and the Structure of a Typical String Graph}
\author{Svante Janson \and Andrew J. Uzzell}
\address{Department of Mathematics, Uppsala University, P.O.~Box 480, SE-751 06 Uppsala, Sweden}
\email{\href{mailto:svante.janson@math.uu.se}{svante.janson@math.uu.se}}
\email{\href{mailto:andrew.uzzell@math.uu.se}{andrew.uzzell@math.uu.se}}
\date{12 March, 2014}
\thanks{Partly supported by the Knut and Alice Wallenberg Foundation.}
\begin{document}

\begin{abstract}
We study limits of convergent sequences of string graphs, that is, graphs with an intersection representation consisting of curves in the plane.  We use these results to study the limiting behavior of a sequence of random string graphs.  We also prove similar results for several related graph classes.
\end{abstract}

\maketitle

\section{Introduction}\label{se:intro}

Given a graph property~$\PP$, it is interesting to study the structure of a typical graph that satisfies $\PP$.  A natural definition of a ``typical'' graph is a graph chosen uniformly at random from all graphs of a given order that satisfy $\PP$.  One can choose a sequence of random graphs in this way and then study its limiting behavior.  The theory of graph limits concerns the asymptotic behavior of certain sequences of graphs, and therefore provides a natural framework for studying the structure of a typical graph with a given property.  In this paper, we will study the structure of string graphs.  We will study graph limits of string graphs and of related graph classes and draw some conclusions about random string graphs and random elements of these other classes.  

A \emph{planar curve} is the image of a continuous function~$f : [0,1] \to
\R^2$.  The points $f(0)$ and~$f(1)$ are called the \emph{endpoints} of the
curve.  A \emph{string representation} of a graph~$G$ is a collection of
planar curves~$\{A_v : v \in V(G)\}$ such that $A_u \cap A_v \neq \emptyset$
if and only if $uv \in E(G)$.   We say that a graph $G$ is a \emph{string graph} if it has a string
representation and we let $\SG$ denote the family of string graphs.  String graphs have been studied by many authors, see e.g.\ 
\cite{Sinden,EET,KGK,PT06}
and the further references given there.

It is intuitively clear that a graph has a string representation if and only
if it has an intersection representation consisting of arcwise-connected
sets. Alternatively, we may assume that the curves in the definition are
homeomorphic images of $\oi$, and there are several other variations of the
definition that give the same class of graphs.
Although such equivalences are well-known, 
we have not found a detailed proof of these equivalences in the
literature, so we give a %fairly detailed 
proof in
Appendix~\ref{se:arcwiserep}.

As mentioned above, we will also study several special classes of string graphs.  First, an \emph{outer-string representation} of a
graph~$G$ is a string representation such that all of the curves~$A_v$ lie
in a disk and such that each $A_v$ has an endpoint on the boundary of the
disk.  We say that a graph $G$ is an \emph{outer-string graph} if it has an outer-string
representation and let $\OSG$ denote the family of
outer-string graphs.  (Outer-string graphs were first so called
in~\cite{Kra91}, but were studied in the monograph~\cite{KGK}, there denoted~$Ng^0$.
Sinden~\cite{Sinden} studied a special case in which the strings are
required to meet the boundary of the disk in a prescribed order.) 
It is clear from the definition that every outer-string
graph is a string graph.  
It was shown in \cite{KGK} that
the converse does not hold; 
one consequence of our results is that only a very small fraction
of all string graphs are outer-string graphs
(see Remark~\ref{re:proper}). 

Next, we consider the class of graphs with an intersection representation consisting of strings lying between two parallel segments, with one endpoint on each segment.  It has been discovered several times~\cite{GRU83,KGK,Lov83} that this class is equal to the class of incomparability graphs.  (Recall that if $<$ is a partial order on $[n]$, then the \emph{incomparability graph} of~$<$ is the graph with vertex set~$[n]$ in which $x \sim y$ if and only if neither $x < y$ nor $y < x$.)  We let $\incomp$ denote the class of incomparability graphs.

Finally, we say that a graph is a \emph{two-clique graph} if it is the disjoint union of at most two cliques.
%we consider the class of \emph{two-clique graphs}, that is,
%graphs~$G = K_n \cup K_m$, where $n$,~$m \geq 0$.
We let $\mathcal{TCG}$ denote the class of two-clique graphs.

It is easy to see that all of these classes are hereditary.  They are
related by the following theorem~\cite[Theorem 5.8]{KGK}.  For $k \geq 1$, the authors of~\cite{KGK}
defined $k\mathcal{NG}$ to be the set of graphs $G$ with the property that if
$H_1$, \dots, $H_k$ are cliques such that the vertex sets of~$G$ and of the $H_i$ are all pairwise disjoint,
then any graph~$F$ with $V(F) = V(G \cup
H_1 \cup \dots \cup H_k)$ and $E(F) \supseteq E(G \cup H_1 \cup \dots \cup
H_k)$ is a string graph.

\begin{theorem}[\cite{KGK}]\label{th:classes}
We have the following characterizations of the classes~$k\NG$.
\begin{romenumerate}
\item $1\NG$ equals the class~$\OSG$ of outer-string graphs;
\item $2\NG$ equals the class~$\incomp$ of incomparability graphs;
\item $3\NG$ equals the class~$\mathcal{TCG}$ of two-clique graphs.
\end{romenumerate}
Moreover, $4\NG = \set{K_0}$, where $K_0$ denotes the null graph, and $k\NG = \emptyset$ for $k \geq 5$.
\qed
\end{theorem}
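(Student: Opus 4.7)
The plan is to establish both inclusions in each of (i)--(iii) separately, and then to derive the two concluding statements from a single non-string construction.

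For the three forward inclusions $\OSG \subseteq 1\NG$, $\incomp \subseteq 2\NG$ and $\mathcal{TCG} \subseteq 3\NG$, I would start with a geometric representation of $G$ and augment it to a string representation of the combined graph $F$. In (i), extend each curve of an outer-string representation of $G$ slightly past the boundary of the disk $D$; place the added clique $H_1$ as small curves meeting at a common point external to $D$, and route each extension into this external cluster exactly as the extra edges of $F$ demand. In (ii), the two parallel segments of an incomparability-graph representation naturally allow one additional clique to be placed beyond each segment, with each string of $G$ reaching one or both clusters as needed. In (iii), combine the two cliques of $G$ with the three added cliques to obtain five ``blobs''; since $K_5$ admits a string representation in which every pair of strings intersects in a prescribed number of disjoint arcs, one can realize arbitrary edge patterns between the blobs by running appropriately many thin tentacles through these intersection regions.

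For the reverse inclusions, I would instantiate the defining property of $k\NG$ with carefully chosen cliques $H_i$ and extra edges. For (i), take $H_1 = \set{h}$ joined to every vertex of $G$; in the resulting string representation, the complement of a small open neighborhood $N$ of $A_h$ is, after one-point compactification, homeomorphic to a disk, and the portions of each $A_v$ outside $N$ give the required outer-string representation of $G$. For (ii), use two singleton cliques in an analogous way to produce two boundary curves which, after a homeomorphism of the plane, straighten into the two parallel segments of an incomparability representation. For (iii), use three singleton cliques joined to all of $G$ and argue, via $K_5$-non-planarity combined with properties of the three resulting anchor strings, that $G$ cannot contain three pairwise non-adjacent vertices, hence $G \in \mathcal{TCG}$.

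Both $4\NG = \set{K_0}$ and $k\NG = \emptyset$ for $k \geq 5$ reduce to exhibiting a non-string graph built from five pairwise-disjoint cliques together with a suitable choice of additional edges: any non-null $G$ contains a single vertex which is itself a clique, so in the $k = 4$ case this vertex together with $H_1,\ldots,H_4$ supplies five disjoint cliques. The natural candidate is a ``thickening'' of $K_5$ in which each of the five cliques is a blown-up vertex and the inter-clique edges encode $K_5$'s non-planar structure tightly enough to obstruct any string representation.

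The main obstacle I anticipate is the reverse inclusion in (iii), which is truly topological: extracting a two-clique structure of $G$ from a single string representation of a specifically chosen $F$ requires a careful $K_5$-non-planarity argument, closely related to the one used for the concluding statements. The reverse inclusions in (i) and (ii), while also requiring topological care, should be more tractable via compactification and straightening arguments.
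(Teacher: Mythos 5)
First, a caveat: the paper does not prove this theorem at all --- it is quoted from \cite[Theorem 5.8]{KGK} and used as a black box --- so there is no in-paper argument to compare yours against; I can only assess your proposal on its merits. Its architecture (two inclusions per item, plus one non-string obstruction driving the last two claims) is right, but several steps would fail as written. The most serious is the forward inclusion of (iii). You propose to realize ``arbitrary edge patterns between the five blobs'' formed by the two cliques of $G$ and the three added cliques. That is false: the graph $G_5$ of Lemma~\ref{le:stringforb} is partitioned into five cliques with edges only between distinct parts, yet it is not a string graph, so not every graph covered by five cliques is a string graph; if your tentacle construction through a string representation of $K_5$ worked, it would prove $G_5\in\SG$. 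The inclusion $\mathcal{TCG}\subseteq 3\NG$ holds only because no edges may be added \emph{inside} $G$ (note the paper's ``$E(F)\supseteq E(G\cup H_1\cup\dots\cup H_k)$'' must be read as $F$ inducing exactly $G$ on $V(G)$, else (iii) is false), so the pair of blobs coming from the two cliques of $G$ carries no edges, the quotient clique-cover graph is a subgraph of $K_5$ minus an edge, which is \emph{planar}, and one invokes the fact used throughout this paper that every graph with a planar clique covering is a string graph \cite[Theorem 2.3]{KGK}.

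There are further gaps. In the reverse inclusion of (iii), ``$G$ has no three pairwise non-adjacent vertices'' does not imply $G\in\mathcal{TCG}$ (consider $C_5$); you must also rule out an induced $P_3$, i.e.\ show $G$ is a disjoint union of cliques \emph{and} has independence number at most two. The entire negative side --- all three reverse inclusions and the claims about $4\NG$ and $k\NG$ for $k\ge 5$ --- ultimately rests on exhibiting an explicit non-string graph such as $G_5$ or $B_5$ and proving it is not a string graph (the Hanani--Tutte argument of Lemma~\ref{le:stringforb}); your ``thickening of $K_5$'' gestures at this but supplies neither the graph nor the proof, and this is the crux of the theorem. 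You also omit the positive half of $4\NG=\set{K_0}$, namely that every graph covered by four cliques \emph{is} a string graph. Finally, the cut-and-compactify arguments for the reverse inclusions of (i) and (ii) need the regularized representations of Appendix~\ref{se:arcwiserep}: for an arbitrary curve $A_h$ the complement of a neighborhood need not be a disk, the truncated strings $A_v$ may become disconnected, and intersections between two $A_v$'s lying near $A_h$ may be lost; and in (ii) cutting along two disjoint dominating strings produces an annulus, not a strip, so the ``straightening'' requires an additional argument about winding. A cleaner route for the reverse inclusions of (iii) and for $k\ge4$ is purely combinatorial: if $G$ contains an independent triple or an induced $P_3$, one can choose the $H_i$ and the cross edges so that $F$ contains $G_5$ as an induced subgraph, and hereditariness of $\SG$ finishes the job.
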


It is easy to see that $k\NG \subseteq (k - 1)\NG$ for all $k \geq 2$, and it is shown in~\cite{KGK} that (for $k \leq 5$) all of these inclusions are strict.

\subsection{Preliminaries}\label{se:prelims}

As noted above, we will use the theory of graph limits to study these graph
classes.  For the basic notions of graph limits, see the recent book 
by Lov\'asz \cite{LovaszBook}
or, e.g., \cite{BCLSV1,DJ08,LS06}.
We recall that certain sequences of graphs~$(G_n)$, with
$|V(G_n)|\to\infty$, are defined to be \emph{convergent}. A convergent
sequence of graphs has a limit, a \emph{graph limit}; these objects can be
defined in several different ways.
A \emph{graphon} is a (measurable) symmetric function $\oi^2\to\oi$.
Each graphon~$W$ defines a unique graph limit~$\Gamma$
(we say that $W$ \emph{represents} $\Gamma$), and every graph limit is
represented by some graphon; however, the representing graphon is not
unique. We say that two graphons $W$ and~$W'$ are \emph{equivalent},
and write $W\cong W'$, 
if they represent the same graph limit. (Hence, the graph limits correspond to
equivalence classes of graphons, and may be defined in this way.)

The \emph{entropy} of a graphon~$W$ is defined as
\begin{equation*}
    \label{entw}
\Ent(W) = \int_0^1 \int_0^1  h(W(x,y)) \,d x \,d y,
\end{equation*}
where $h(x) = -x \log_2(x)-(1-x)\log_2(1-x)$ is the usual binary entropy
function, see \cite{HJS} and \cite{ChatterjeeVaradhan11}. 
The entropy $\Ent(\Gamma)$
of a graph limit $\Gamma$ is the entropy of any representing graphon;
equivalent graphons have the same entropy, so this defines $\Ent(\Gamma)$
uniquely.

If $\P$ is a graph class (or graph property, we do not distinguish between
these), then $\limitsx \P$ denotes the set of all graph limits $\Gamma$ such that
there exists a sequence~$G_n\in\P$ with $G_n\to \Gamma$. 
We will also, slightly abusing the notation, let $\limitsx \P$ denote the set of
all graphons that represent such a graph limit.
Furthermore, let $\maxlimits{\P}$ denote the set of graph limits (or graphons)
in $\limitsx\P$ with
maximal entropy. (This set is nonempty, except in the trivial case when 
the graph class $\P$ is finite, see \cite{HJS}.)
For the importance of the set $\maxlimits{\P}$ of maximum-entropy graph limits,
see \cite{HJS}.

We define some special graphons and sets of graphons, see further
\cite{HJS}. 
For convenience, we
define them on $\ooi^2$ instead of $\oi^2$; this is clearly immaterial.

\begin{definition}
Fix $k \geq 1$.  For each $i \in [k]$, let $I_i = [(i - 1)/k, i/k)$.  
  \begin{romenumerate}[-6pt]
%\item 
%Let $W_{K_k}$ be the graphon that is $1$
%on $\bigcup_{i\neq j} I_i\times I_j$ and $0$ on each $I_i\times I_i$.
  \item 
Let $R_k$ be the set of all graphons $W$ such that $W(x,y)=1/2$ 
on $\bigcup_{i\neq j} I_i\times I_j$ and $W(x,y)\in\set{0,1}$ on each
$I_i\times I_i$.  We also let $R_{\infty}$ consist of the constant graphon~$W = 1/2$.
\item 
For $s \in \{0, \dots, k\}$, let $W^*_{k,s}$ be the graphon that is $1/2$
on $\bigcup_{i\neq j} I_i\times I_j$,
$1$ on each $I_i\times I_i$, $i\le s$, and $0$ on each $I_i\times I_i$, $i>s$.
\item 
For $a\in\oi$, let $W^k_a$ denote the graphon obtained from $W^*_{k,k}$ by 
changing it on $I_1 \times I_1$ 
such that
%to a rescaled copy of~$W_{K_2}$---that is,
$W^k_a = 1$ on $[0, a/k)^2 \cup [a/k, 1)^2$ and $W^k_a = 0$ on $[0, a/k)
	  \times [a/k, 1) \cup [a/k, 1) \times [0, a/k)$. 
  \end{romenumerate}
\end{definition}

Note that $W^*_{k,s}\in R_k$ for $s=0,\dots,k$, and $W^k_a\in R_k$.
Furthermore, 
%$\Ent(W_{K_k})=0$, while 
$\Ent(W)=1-1/k$ for every $W\in R_k$;
in particular, $\Ent(W^*_{k,s})=\Ent(W^k_a)=1-1/k$.

We note that $W^k_0=W^k_1=W^*_{k,k}$ and that
$W^k_a \cong W^k_{1-a}$.  However, if $b \notin \{a, 1- a\}$,
then $W^k_a \ncong W^k_{b}$, because the two graphons have different edge
densities.  Indeed, for any~$a \in [0, 1]$, we have 
\begin{equation}\label{eq:Wdensity}
\int_0^1\int_0^1 W^k_a 
= \dfrac{a^2 + (1-a)^2}{k^2} + \dfrac{k-1}{k^2} + 
\frac{k(k-1)}{2k^2}.
\end{equation}

Given $t \geq 1$ and $0 \leq s \leq t$, we define $\CC(t, s)$ to be the set
of graphs whose vertex sets can be partitioned into $s$ (possibly empty)
cliques and $t-s$ (possibly empty) independent sets.  In particular, $\CC(t, 0)$ is the class of $t$-colorable graphs.  If $\Q$ is a hereditary
property, the \emph{coloring number} of~$\Q$, denoted $\col(\Q)$,
is the largest $t$ for which $\mathcal{C}(t,s) \subseteq \Q$ for some $s
\leq t$, see e.g.\ the survey \cite{Bol07}.  
(We define $\col(\Q) = \infty$ if $\Q$ is the class of
all unlabeled finite graphs; otherwise, $\col(\Q)$ is finite.)

Let $\graphs$ denote the set of unlabeled finite graphs and let $\graphs_n$
denote the set of unlabeled graphs on $n$ vertices.  Given a graph
property~$\PP$, let $\PP_n = \PP \cap \graphs_n$ and let $\PP^L_n$ denote the set of labeled graphs in $\PP$ with vertex set~$[n]$. The function $n \mapsto \abs{\PP_n}$ is called the (unlabeled) \emph{speed} of~$\PP$. 
(The labeled speed is defined similarly.
%as the number of graphs with vertex set $[n]$ in $\P$.
For our purposes, it does not matter whether we consider labeled or
unlabeled graphs, since the difference is at most a factor $n!=2^{o(n^2)}$,
  which will be negligible.)
The speed of graph properties has been studied in many papers,
see e.g.\ \cite{Ale92,BT,Bol07,PT06}.

Hatami, Janson, and Szegedy~\cite[Theorem 1.9]{HJS} proved the following
result that relates the maximum-entropy graph limits of a hereditary
property~$\Q$, the speed of~$\Q$, and the coloring number of~$\Q$. 

\begin{theorem}[\cite{HJS}]\label{th:maxlimits}
If $\Q$ is a hereditary class of graphs, there exists $r \in \N \cup \set{\infty}$ such that $\max_{\Gamma \in \limitsx{\Q}} \Ent(\Gamma) = 1 - 1/r$ and every
graph limit $\Gamma \in \maxlimits{\Q}$ can be represented by a graphon~$W
\in R_r$; hence $\maxlimits{\Q} = \limitsx{\Q}\cap R_r$.  
 Moreover,
\begin{align*}
	r 	  &= \sup \{t \, : \, W^*_{t,s} \in \limitsx{\Q} \text{ for some $s \leq t$}\}\\
&= \sup \{t \, : \, \mathcal{C}(t,s) \subseteq \Q \text{ for some $s \leq t$}\}\\
&= \col(\PP).
\end{align*}
Moreover,
\begin{equation}\label{eq:speed}
  \abs{\PP_n} = 2^{\bigl(1 - \frac{1}{r} + o(1)\bigr)\binom{n}{2}}.
\end{equation}
\end{theorem}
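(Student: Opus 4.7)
The plan is to set $r := \col(\Q)$ at the outset and then verify, in order: the lower bound on the maximum entropy (via achievability by a $W^*_{r,s}$), the upper bound with its structural characterization, the identification with the other two suprema, and the speed formula.

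For achievability, pick $s \leq r$ with $\mathcal{C}(r,s) \subseteq \Q$, which exists by the definition of $\col(\Q)$. Sampling the $W$-random graph $G(n, W^*_{r,s})$ with $X_1, \dots, X_n$ \iid{} uniform on $\oi$, the induced partition by the blocks $I_1, \dots, I_r$ yields $s$ cliques and $r - s$ independent sets, so $G(n, W^*_{r,s}) \in \mathcal{C}(r,s) \subseteq \Q$ almost surely; combined with the standard \as{} convergence $G(n, W^*_{r,s}) \to W^*_{r,s}$ in the graph-limit sense, this places $W^*_{r,s} \in \limitsx{\Q}$, and since $\Ent(W^*_{r,s}) = 1 - 1/r$ we have $\max \Ent \geq 1 - 1/r$. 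Conversely, if $W^*_{t,s} \in \limitsx{\Q}$ for some $s \leq t$, then every $H \in \mathcal{C}(t,s)$ has positive induced-subgraph density in $W^*_{t,s}$ (assign its vertices appropriately among the $t$ blocks), hence appears as an induced subgraph of every sufficiently large graph in any $\Q$-sequence converging to $W^*_{t,s}$; hereditariness of $\Q$ then yields $\mathcal{C}(t,s) \subseteq \Q$ and therefore $t \leq r$. This shows the three suprema all equal $r$.

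The heart of the argument is the upper entropy bound $\Ent(\Gamma) \leq 1 - 1/r$ for every $\Gamma \in \limitsx{\Q}$. Given a representing graphon $W$ for $\Gamma$, consider the $W$-random graph $G(n, W)$: its Shannon entropy, conditioned on the vertex sample, is $\binom{n}{2}\Ent(W) + o(n^2)$. Concentration of the cut norm for $W$-random graphs forces typical samples to be cut-close to $W$, and a standard deletion/completion argument modifies such a sample into an element of $\Q_n$ by altering only $o(n^2)$ edges; hence the essential support of $G(n, W)$ sits inside a set of size $|\Q_n| \cdot 2^{o(n^2)}$, so $H(G(n, W)) \leq \log_2 |\Q_n| + o(n^2)$. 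Combined with the Alekseev--Bollob\'as--Thomason bound $|\Q_n| = 2^{(1 - 1/r + o(1))\binom{n}{2}}$, this yields $\Ent(W) \leq 1 - 1/r$. Equality forces $h(W(x,y)) \in \{0, 1\}$ almost everywhere (strict concavity of $h$), so $W$ takes only the values $0$, $1/2$, $1$ a.e.; a further stability/structural argument then identifies the $1/2$-support with the off-diagonal part of a measurable partition of $\oi$ into $r$ blocks, with constant values $0$ or $1$ on each diagonal block---precisely $W \in R_r$.

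The speed formula \eqref{eq:speed} I would treat as an input from the Alekseev--Bollob\'as--Thomason theorem (already invoked above), whose lower bound comes from counting graphs in $\mathcal{C}(r,s) \subseteq \Q$. The main obstacle is the upper entropy bound, specifically the deletion/completion step: one must control the cut-metric distance of a typical sample $G(n, W)$ from $\Q_n$ tightly enough that only $o(n^2)$ edge modifications suffice to land in $\Q_n$. This demands careful synchronization of $W$-random-graph concentration with the hereditary structure of $\Q$; everything else in the proof is bookkeeping around this core estimate.
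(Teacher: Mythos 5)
First, note that the paper does not prove this theorem at all: it is quoted verbatim from Hatami--Janson--Szegedy \cite{HJS} (with the speed formula \eqref{eq:speed} attributed further back to Alekseev and to Bollob\'as--Thomason), so there is no in-paper argument to compare against. Judged on its own terms, your skeleton is right in outline --- the achievability of $1-1/r$ via $G(n,W^*_{r,s})\in\CC(r,s)\subseteq\Q$, and the identification of the three suprema via positivity of $p(H,W^*_{t,s})$ for $H\in\CC(t,s)$ plus hereditariness, are both correct and are essentially how \cite{HJS} proceeds --- but two steps have genuine gaps.

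First, the upper bound $\Ent(W)\le 1-1/r$. Your ``standard deletion/completion argument'' does not follow from cut-norm concentration: being cut-close to a graphon $W\in\limitsx{\Q}$ does not by itself put a sample within $o(n^2)$ edge edits of $\Q_n$; to get that you would need the infinite induced removal lemma of Alon--Shapira (or an equivalent), which is a major theorem you cannot wave in as ``standard.'' Moreover the detour is unnecessary: since $\Q$ is hereditary, $W\in\hcP$ forces $\tind(G;W)=0$ for every $G\notin\Q$ (this is \cite[Theorem 3.3]{Jan13b}, quoted in Section~\ref{se:tools}), so $G(n,W)\in\Q_n^L$ with probability one and $\binom{n}{2}\Ent(W)=H\bigl(G(n,W)\mid X_1,\dots,X_n\bigr)\le H(G(n,W))\le\log_2\abs{\Q_n^L}$, which with the Alekseev--Bollob\'as--Thomason bound gives $\Ent(W)\le 1-1/r$ directly.

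Second, and more seriously, the structural claim that every maximizer is represented by some $W\in R_r$ is asserted rather than proved. Your intermediate step --- that equality $\Ent(W)=1-1/r$ ``forces $h(W(x,y))\in\{0,1\}$ almost everywhere by strict concavity'' --- is a non sequitur: equality of the integral $\int h(W)$ with $1-1/r$ places no pointwise constraint on $W$, and strict concavity of $h$ is not being applied to anything (there is no averaging step in sight). The deduction that the $\tfrac12$-set is exactly the off-diagonal part of a measurable $r$-partition with $\setoi$-valued diagonal blocks is precisely the hard variational/stability argument in \cite{HJS}, and ``a further stability/structural argument then identifies\dots'' is naming the conclusion, not supplying a proof. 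Since this structural statement is what the rest of the paper actually uses (it is the input to Lemma~\ref{le:magic} and to the proof of Theorem~\ref{th:general}), the gap is material.
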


We note that~\eqref{eq:speed} was originally proved independently by Alekseev~\cite{Ale92} and by Bollob\'as and Thomason~\cite{BT}.

\subsection{Main Results}\label{se:results}

One of our aims is to classify maximum-entropy graph limits of string graphs and of related families of graphs.  In order to do so, we prove a somewhat more general result about the maximum-entropy graph limits of certain hereditary properties.  Before we can state this result, we need to define a few special graphs.

Let $k \geq 3$.  We define three graphs, each with vertex set consisting of
the $k+\binom k2$ subsets of~$[k]$ of size either one or~two.  We denote the
vertex 
corresponding to a subset~$S$ by $v_S$.
%(In order to simplify notation, we will sometimes denote the set~$\{i\}$ by $\set{i, i}$.)
First, let $G_k$ be
the intersection graph of this family of subsets, that is, the graph in
which two vertices are adjacent if and only if their corresponding subsets
have non-empty intersection.  Second, we define $B_k$ in the same way,
except that we do not allow edges between two vertices that correspond to
subsets of size~two.  Hence, $B_k$ is bipartite.  Finally, for $k \geq 4$, we define $H_k$
be the subgraph of $G_k$ with
$E(H_k) = E(G_k) \setminus \{v_{\set{1,2}}v_{\set{1,3}}, 
v_{\set{1,2}}v_{\set{2,3}}, v_{\set{1,3}}v_{\set{2,3} }\}$.
(Equivalently, $H_k$ is such that $v_{\{s\}} \sim v_{\{t, u\}}$ if and only if $s = t$
or $s = u$, while $v_{\{s_1, s_2\}} \sim v_{\{t_1, t_2\}}$ if and only if
$\{s_1, s_2\} \cap \{t_1, t_2\} \neq \emptyset$ and $\max\{s_1, s_2, t_1,
t_2\} \geq 4$.)

Given a family of graphs~$\mathcal{H}$, we define $\Forb^*(\HH)$ to be the class of graphs that do not contain a copy of any $H \in \HH$ as an induced subgraph.  It is easy to see that $\Forb^*(\HH)$ is a hereditary class.

\begin{theorem}\label{th:general}
Let $\PP$ be a hereditary class of graphs and let $r = \col(\PP)$.  
If\/ $3 \leq r < \infty$ and $\PP \subseteq \Forb^*(\{G_{r + 1}, B_{r+1}, H_{r+1}\})$, then
\[
  \maxlimits{\PP} \subseteq \bigl\{ W^r_a \, : \, a \in [0, 1] \bigr\}
\]
up to equivalence of graphons.

If $r = 2$ and $\PP \subseteq \Forb^*(\{G_{3}, B_{3}\})$, then
\[
  \maxlimits{\PP} \subseteq \bigl\{ W^2_a \, : \, a \in [0, 1] \bigr\}
\]
up to equivalence of graphons.
\end{theorem}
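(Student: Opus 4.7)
The plan is to first apply Theorem~\ref{th:maxlimits} to reduce to graphons in $R_r$, and then use the forbidden-subgraph hypotheses to pin down the structure of the diagonal blocks. Since $\col(\PP) = r$, every maximum-entropy graphon $W$ of $\PP$ is (up to equivalence) in $R_r$: it equals $1/2$ on $\bigcup_{i \neq j} I_i \times I_j$ and is $\{0,1\}$-valued on each diagonal block $I_i \times I_i$. Writing $U_i$ for the rescaling of $W|_{I_i \times I_i}$ to a graphon on $[0,1)^2$, the goal is to show that $(U_1, \dots, U_r)$ must have the shape underlying $W^r_a$.

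The key reduction to verify first is the following factorization: for $W \in R_r$ and any finite graph $F$, the induced density $t_{\mathrm{ind}}(F, W)$ is positive if and only if there is a partition $V(F) = V_1 \sqcup \dots \sqcup V_r$ such that $t_{\mathrm{ind}}(F[V_i], U_i) > 0$ for every $i$. The ``if'' direction is immediate: each off-diagonal block has density exactly $1/2$, so any prescribed adjacency pattern across $V_i, V_j$ ($i \neq j$) occurs with positive probability; the ``only if'' follows from the $r$-block structure of $W$. Applying this to $F \in \{G_{r+1}, B_{r+1}, H_{r+1}\}$, the hypothesis $W \in \limitsx{\PP}$ forces $t_{\mathrm{ind}}(F, W) = 0$, i.e., no partition of $V(F)$ into $r$ parts is blockwise realizable in $(U_1, \dots, U_r)$.

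The bulk of the work is then a combinatorial analysis of which $(U_1, \dots, U_r)$ satisfy these non-embeddability constraints. I aim to prove three structural claims: (i) no $U_i$ is a.e.\ equal to $0$ (no ``empty'' block); (ii) any non-constant $U_i$ is equivalent to a two-clique graphon (equal to $1$ on $[0,b)^2 \cup [b,1)^2$ and $0$ elsewhere) for some $b \in (0,1)$; and (iii) at most one $U_i$ is non-constant. Taken together, these force $W \cong W^r_a$ after measure-preservingly reordering the blocks $I_1, \dots, I_r$. The three forbidden graphs play complementary roles: $B_{r+1}$ excludes configurations with two or more empty blocks, by placing the two independent sides of $B_{r+1}$ into two empty blocks, with the singleton--pair edges realized by the cross-block density $1/2$; $G_{r+1}$ excludes both the mixed configuration of one empty block with cliques (the singletons fit in the empty block, while the $\binom{r+1}{2}$ pairs are organized as stars or triangles in the Johnson graph $J(r+1,2)$ inside the clique blocks) and the configuration with two non-trivial two-clique blocks (enough ``splits'' become available to embed all of $G_{r+1}$); and $H_{r+1}$ (used only when $r \geq 3$) removes the triangle $v_{\{1,2\}}, v_{\{1,3\}}, v_{\{2,3\}}$ from $G_{r+1}$, turning those three pair-vertices into an independent set that embeds into any block with three or more pairwise non-adjacent sub-regions, thereby forcing a non-constant $U_i$ to have at most two clique components.

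The main obstacle will be the case-by-case construction of blockwise-realizable partitions witnessing these exclusions. In particular, showing that a non-constant $U_i$ must be a two-clique, and not some more intricate $\{0,1\}$-valued graphon, relies crucially on the three-vertex-independent-set gadget supplied by $H_{r+1}$, which is why the statement for $r \geq 3$ includes it. For $r = 2$, the graph $H_3$ is unavailable, but the fact that there are only two blocks makes the possible configurations much more rigid, and the required exclusions can be carried out using only $G_3$ and $B_3$.
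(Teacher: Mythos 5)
Your overall architecture (restrict to $R_r$, factor induced densities blockwise, then do combinatorics on the diagonal blocks $U_i$) matches the paper's, and your factorization lemma is correct. But the combinatorial core --- which you defer as ``the bulk of the work'' --- contains two genuine errors. First, you have the roles of $G_{r+1}$ and $H_{r+1}$ essentially reversed. Forbidding $G_{r+1}$ does \emph{not} exclude two non-trivially split blocks: already for $r=3$ one can check that $V(G_4)$ admits no partition into two parts each inducing a disjoint union of at most two cliques plus one part inducing a clique (the pair-vertex $v_{\{a,b\}}$ is adjacent to both singletons in the two-singleton part, and chasing where $v_{\{a,c\}}$ and $v_{\{b,c\}}$ can go always creates an edge between the two would-be cliques of some part). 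That is exactly why $H_{r+1}$ --- in which the triangle on $v_{\{1,2\}},v_{\{1,3\}},v_{\{2,3\}}$ is deleted, so that for instance $\{v_{\{1\}},v_{\{1,3\}}\}\cup\{v_{\{2,3\}}\}$ becomes a union of two cliques with no edges between them --- appears in the hypothesis; it, not $G_{r+1}$, kills the two-split-blocks configuration. Conversely, ``at most two clique components per block'' comes from $G_{r+1}$ (its three pairwise non-adjacent singletons $v_{\{1\}},v_{\{2\}},v_{\{3\}}$ land in three distinct components), not from the independent triple of pairs in $H_{r+1}$. Second, your step (ii) is missing an essential ingredient: excluding an independent triple inside a block does not force the block to be a union of two cliques. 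A $\{0,1\}$-valued block equivalent to a blow-up of $C_5$ by cliques has no independent triple yet is not a disjoint union of cliques at all. What is needed is transitivity of the adjacency relation within a block, i.e.\ that each block has zero induced $P_3$-density; the paper gets this from the partition of $G_{r+1}$ with one part $\{v_{\{1\}},v_{\{2\}},v_{\{1,2\}}\}$ inducing a path. Your sketch never produces this.

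There is also a bootstrapping problem specific to your a.e./density formulation. Every one of your exclusion arguments needs the ``other'' blocks to carry arbitrarily large cliques with positive probability, and at the outset nothing rules out, say, a triangle-free block. The paper avoids this by passing, via Petrov's removal lemma (Lemma~\ref{le:magic}), to a representative for which $W(x,x)=1$ pointwise on $r-1$ of the blocks, so that repeating a single point manufactures a clique of any size. In your framework you must first prove that at least $r-1$ blocks have positive $K_m$-density for every $m$ --- for example via a Ramsey dichotomy for $\{0,1\}$-valued graphons (each block has either all clique densities or all independent-set densities positive) combined with the $B_{r+1}$ exclusion --- before any of the $G_{r+1}$ or $H_{r+1}$ embeddings can be run. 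As written, the proposal is circular on this point.
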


Note that the assumption $\PP \subseteq \Forb^*(\{G_{r + 1}, B_{r+1},
H_{r+1}\})$ is equivalent to 
$G_{r + 1},\allowbreak B_{r+1}, H_{r+1}\notin \PP$.

Using Theorem~\ref{th:general}, we can characterize maximum-entropy limits of string graphs, outer-string graphs, and incomparability graphs as follows.  See also Theorem \ref{th:convexlimits} for another related result.

\begin{theorem}\label{th:stringlimits}
Up to equivalence of graphons,
\[
	\maxlimits{\SG} = \bigl\{ W^4_a \, : \, a \in [0, 1] \bigr\}.
\]
\end{theorem}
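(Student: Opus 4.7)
The plan is to apply Theorem~\ref{th:general} with $\PP = \SG$ and $r = 4$ to obtain the inclusion $\maxlimits{\SG} \subseteq \{W^4_a : a \in [0,1]\}$, and then to establish the reverse inclusion by constructing, for each $a$, a sequence of string graphs converging to $W^4_a$. First I would compute $\col(\SG) = 4$. The lower bound follows from Theorem~\ref{th:classes}: since $K_0 \in 4\NG$, applying the definition of $4\NG$ with $G = K_0$ shows that any graph obtained as a disjoint union of four (possibly empty) cliques together with arbitrary extra edges is a string graph, so $\CC(4,4) \subseteq \SG$ and hence $\col(\SG) \geq 4$ by Theorem~\ref{th:maxlimits}. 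For the upper bound, I would invoke the Pach--T\'oth speed estimate $|\SG_n| = 2^{(3/4 + o(1))\binom{n}{2}}$ from~\cite{PT06}, which together with~\eqref{eq:speed} forces $\col(\SG) \leq 4$.

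Next I need to check the hypothesis of Theorem~\ref{th:general}, namely $\SG \subseteq \Forb^*(\{G_5, B_5, H_5\})$, which (since $\SG$ is hereditary) reduces to $G_5, B_5, H_5 \notin \SG$. This topological assertion about the non-existence of string representations of three specific $15$-vertex graphs is the main obstacle, and I expect it to be established in a dedicated lemma by combinatorial bookkeeping on a hypothetical string representation, using the fact that $5\NG = \emptyset$ in Theorem~\ref{th:classes}; concretely, inside a supposed string representation of $G_5$, $B_5$, or $H_5$, one identifies substructures which, together with the five ``pure'' cliques $V_i = \{v_S : i \in S\}$, would certify that some non-empty graph belongs to $5\NG$, contradicting Theorem~\ref{th:classes}. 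Granting this, Theorem~\ref{th:general} yields $\maxlimits{\SG} \subseteq \{W^4_a : a \in [0,1]\}$ up to equivalence of graphons.

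For the reverse inclusion, fix $a \in [0,1]$ and construct a random graph $G_n^{(a)}$ by partitioning $[n]$ into four nearly equal parts $V_1, V_2, V_3, V_4$, splitting $V_1 = V_1' \sqcup V_1''$ with $|V_1'| = \lfloor a|V_1|\rfloor$, making each of $V_1'$, $V_1''$, $V_2$, $V_3$, $V_4$ a complete graph and placing no edges between $V_1'$ and $V_1''$, and then inserting each edge between two distinct parts $V_i$, $V_j$ independently with probability $1/2$. Standard convergence results for $W$-random graphs give $G_n^{(a)} \to W^4_a$ almost surely. Moreover, $G_n^{(a)}[V_1]$ is a two-clique graph, hence lies in $\mathcal{TCG} = 3\NG$ by Theorem~\ref{th:classes}(iii); adjoining the three further cliques $V_2$, $V_3$, $V_4$ (with the between-parts edges playing the role of the ``extra'' edges in the definition of $3\NG$) shows that $G_n^{(a)} \in \SG$ deterministically. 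Therefore $W^4_a \in \limitsx{\SG}$, and since $\Ent(W^4_a) = 1 - 1/4$ equals the maximum entropy of $\limitsx{\SG}$ by Theorem~\ref{th:maxlimits}, we conclude $W^4_a \in \maxlimits{\SG}$. Combining the two inclusions yields the theorem.
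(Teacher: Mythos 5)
Your overall architecture matches the paper's: Theorem~\ref{th:general} supplies the inclusion $\maxlimits{\SG} \subseteq \{W^4_a : a \in [0,1]\}$, and a block-model construction supplies the reverse inclusion. The reverse inclusion is handled correctly, and your justification via $\mathcal{TCG} = 3\NG$ (adjoining the three cliques $V_2, V_3, V_4$ to the two-clique graph on $V_1$) is a legitimate variant of the paper's appeal to planar clique coverings from \cite{KGK}. The derivation of $\col(\SG) = 4$ from $\CC(4,4) \subseteq \SG$ together with the Pach--T\'oth speed estimate and \eqref{eq:speed} is also sound.

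The genuine gap is the hypothesis check $G_5, B_5, H_5 \notin \SG$, which you defer to a ``dedicated lemma'' with a sketch that does not work. Knowing $5\NG = \emptyset$ (equivalently, $K_0 \notin 5\NG$) tells you only that \emph{there exists} some graph partitionable into five cliques that is not a string graph; it does not identify that graph, and in particular does not imply that $G_5$, $B_5$, or $H_5$ fails to be a string graph. (Note also that $B_5$ and $H_5$ are spanning subgraphs, not induced subgraphs, of $G_5$, so even non-string-ness of $G_5$ would not pass to them by heredity.) Your plan to ``certify that some non-empty graph belongs to $5\NG$'' is also backwards: membership in $k\NG$ is a universally quantified property over all clique augmentations, so exhibiting one string representation cannot establish it, and deriving a contradiction from it would require disproving string-ness of yet another graph. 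What is actually needed is a topological argument: the paper's Lemma~\ref{le:stringforb} takes a hypothetical string representation $\{A_i, A_{ij}\}$ of any $F$ with $B_5 \subseteq F \subseteq G_5$, routes for each pair $i \neq j$ a curve $e_{ij} \subseteq A_i \cup A_{ij} \cup A_j$ between chosen points $v_i \in A_i$ and $v_j \in A_j$, checks that curves with disjoint index pairs are disjoint, and thereby obtains a drawing of $K_5$ in which no two independent edges cross --- contradicting the Hanani--Tutte theorem. Without this (or an equivalent) argument, the forward inclusion remains unproved.
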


We note that the inclusion $\supseteq$ in \refT{th:stringlimits} was shown in
\cite[Example 2.4]{HJS}, but the question of equality was left open there.

\begin{theorem}\label{th:outerstringlimits}
Up to equivalence of graphons,
\[
	\maxlimits{\OSG} = \bigl\{ W^3_a \, : \, a \in [0, 1] \bigr\}.
\]
\end{theorem}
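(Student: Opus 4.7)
The plan is to apply \refT{th:general} with $\PP = \OSG$ and $r = 3$, which requires verifying two things: (a) $\col(\OSG) = 3$, and (b) $G_4, B_4, H_4 \notin \OSG$. Assuming both, \refT{th:general} immediately yields the inclusion $\maxlimits{\OSG} \subseteq \{W^3_a : a \in [0,1]\}$, and the reverse inclusion is obtained by exhibiting, for each $a \in [0,1]$, a sequence of outer-string graphs whose graph-limit representative is $W^3_a$; since $\Ent(W^3_a) = 1 - 1/3$ matches the maximum entropy $1 - 1/\col(\OSG)$, each $W^3_a$ will then automatically lie in $\maxlimits{\OSG}$.

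For the construction (the $\supseteq$ direction and the bound $\col(\OSG) \geq 3$), I would work directly in a disk. Partition the boundary into three arcs $\alpha_1, \alpha_2, \alpha_3$. For vertices of type $i \in \{2,3\}$ I assign a bundle of $\lfloor n/3 \rfloor$ curves starting on $\alpha_i$ and going inward so that within each bundle the curves pairwise intersect (giving a clique on each of these two diagonal blocks). For the first bundle on $\alpha_1$, I split it into two subbundles of sizes $\lfloor an/3\rfloor$ and $\lfloor(1-a)n/3\rfloor$ with routes chosen so that curves within a subbundle all meet but no curve in one subbundle meets any curve in the other (two cliques, no cross-edges, reproducing the $W^k_a$ modification on $I_1 \times I_1$). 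To realize the $1/2$ off-diagonal density between different bundles, each pair of curves from different arcs independently chooses between a crossing route and a non-crossing route with probability $1/2$; a standard perturbation shows this can be done inside the disk without unwanted extra intersections. The resulting random outer-string graph converges, in the cut metric, to $W^3_a$.

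For the forbidden-subgraph step (which I expect to be the main obstacle), I would give a topological argument showing $G_4, B_4, H_4 \notin \OSG$. Suppose an outer-string representation of $G_4$ existed. The four singleton curves $A_{v_{\{i\}}}$, $i \in [4]$, have prescribed endpoints on the boundary circle in some cyclic order, and they must be pairwise disjoint (since singletons are independent in $G_4$); hence they divide the disk into regions whose closures form a planar tree-like structure. Each pair-curve $A_{v_{\{i,j\}}}$ must meet exactly $A_{v_{\{i\}}}$ and $A_{v_{\{j\}}}$ among singletons, while the six pair-curves must pairwise intersect. One then uses the Jordan curve theorem together with case analysis on the cyclic order of the four singleton endpoints to derive a topological obstruction (an unavoidable crossing that would produce a forbidden edge to a singleton, or a pair-curve that must be disconnected). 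The analyses for $B_4$ (where the pair-curves are additionally required to be pairwise disjoint) and $H_4$ (where three specified pairs are disjoint but the others still intersect) are variants of the same scheme, with $H_4$ being the most delicate since the additional allowed intersections rule out the easy contradictions.

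Finally, to conclude $\col(\OSG) \leq 3$ from the forbidden-subgraph information, observe that $B_4$ is bipartite, so $B_4 \in \CC(4,s)$ for every $s \in \{0,1,2,3\}$ (take any empty cliques together with a bipartition), while $G_4$ admits the partition assigning each pair vertex $v_{\{i,j\}}$ to $\min(i,j)$, giving $G_4 \in \CC(4,4)$. Since $B_4, G_4 \notin \OSG$, we obtain $\CC(4,s) \not\subseteq \OSG$ for every $s \in \{0,\dots,4\}$, hence $\col(\OSG) \leq 3$ by the characterization in \refT{th:maxlimits}. Combining with $\col(\OSG) \geq 3$ from the construction, $\col(\OSG) = 3$, and \refT{th:general} delivers the stated equality.
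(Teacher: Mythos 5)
Your overall architecture matches the paper's: apply Theorem~\ref{th:general} with $r=3$, establish $\col(\OSG)=3$, show $G_4,B_4,H_4\notin\OSG$, and exhibit outer-string graphs converging to each $W^3_a$. The construction for the $\supseteq$ direction is essentially the paper's (the paper takes $G(n,W^3_a)$ and observes that it almost surely admits a clique covering $\covering$ with $G_\covering=K_4-e$, which is outerplanar, then invokes the fact that graphs with outerplanar clique coverings are outer-string graphs; your three-arc bundle construction is a hands-on version of the same thing). Your deduction of $\col(\OSG)\le 3$ is also fine in substance, though your justification that $B_4\in\CC(4,3)$ (``empty cliques together with a bipartition'') only covers $s\le 2$; for $s=3$ you need either an explicit partition of $B_4$ into three cliques and one independent set, or, more simply, Lemma~\ref{le:Ginclusion}, which already gives $G_{4}\in\CC(4,s)$ for \emph{every} $s$, so $G_4\notin\OSG$ alone settles the coloring number.

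The genuine gap is the forbidden-subgraph step, which you correctly identify as the crux but do not actually prove. Your proposed direct Jordan-curve case analysis is only a plan, and it already misdescribes $G_4$: the six pair-curves do \emph{not} pairwise intersect in $G_4$ (e.g.\ $\{1,2\}\cap\{3,4\}=\emptyset$, so $v_{\{1,2\}}\not\sim v_{\{3,4\}}$), so the intended obstruction would have to be rebuilt from scratch, and for $H_4$ in particular there is no indication the case analysis closes. The paper avoids all of this with a short reduction: given an outer-string representation of $G_4$ (or $B_4$, $H_4$) in a disk with each curve reaching the boundary, one adds curves $A_{\{5\}},A_{\{1,5\}},\dots,A_{\{4,5\}}$ \emph{outside} the disk to obtain a string representation of $G_5$ (resp.\ $B_5$, $H_5$); these graphs are not string graphs by Lemma~\ref{le:stringforb}, whose proof builds a drawing of $K_5$ with no two independent edges crossing and invokes the Hanani--Tutte theorem. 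Without this reduction (or a completed topological argument of comparable strength), the inclusion $\maxlimits{\OSG}\subseteq\{W^3_a\}$ is not established.
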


\begin{theorem}\label{th:incomplimits}
Up to equivalence of graphons,
\[
	\maxlimits{\incomp} = \bigl\{ W^2_a \, : \, a \in [0, 1] \bigr\}.
\]
\end{theorem}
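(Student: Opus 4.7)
The plan is to combine an application of \refT{th:general} with an explicit construction of approximating incomparability graphs. For the inclusion $\maxlimits{\incomp}\subseteq\bigl\{W^2_a:a\in[0,1]\bigr\}$ I would apply \refT{th:general} with $\PP=\incomp$ and $r=2$, so the first task is to verify $\col(\incomp)=2$. The lower bound $\col(\incomp)\ge 2$ follows from $\mathcal{C}(2,2)\subseteq\incomp$: a graph $G$ lies in $\mathcal{C}(2,2)$ iff $\bar G$ is bipartite, and every bipartite graph is a comparability graph via the trivial transitive orientation sending all edges from one part to the other. The upper bound $\col(\incomp)\le 2$ comes from the fact that $C_5\in\mathcal{C}(3,s)$ for every $s\in\{0,1,2,3\}$ (the required partitions into at most three edges and isolated vertices are easy to exhibit), while $C_5\notin\incomp$ since comparability graphs, and hence incomparability graphs, are perfect.

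The second hypothesis of \refT{th:general} requires $G_3,B_3\notin\incomp$, \ie, that $\bar{G_3}$ and $\bar{B_3}$ are not comparability graphs. One first notes that $B_3$ is the $6$-cycle $C_6$, so $\bar{B_3}$ is the triangular prism $K_3\square K_2$, and I would argue by case analysis that the prism has no transitive orientation: after fixing a linear orientation of one of its two triangles, each of the three matching edges is forced into a direction that completes a length-two chain $x\to y\to z$ with $xz$ a non-edge, contradicting transitivity. The same style of argument handles $\bar{G_3}$, which consists of a triangle on the three singleton-vertices together with a perfect matching to the three $2$-set-vertices; any transitive orientation of the central triangle forces the matching edges into an inconsistent configuration. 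This purely combinatorial check is the main obstacle in the plan; once it is in hand, \refT{th:general} yields $\maxlimits{\incomp}\subseteq\bigl\{W^2_a:a\in[0,1]\bigr\}$.

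For the reverse inclusion I would realize each $W^2_a$ as a limit of incomparability graphs by the following poset construction. For large $n$, partition $[n]$ into three antichains $A$, $B$, $C$ of sizes close to $an/2$, $(1-a)n/2$, and $n/2$; impose $x<y$ for every $x\in A$ and $y\in B$; and for each $c\in C$ and each $x\in A$, independently set $x<c$ with probability $1/2$ (otherwise declare $x$ and $c$ incomparable), and similarly for each $y\in B$ independently set $c<y$ with probability $1/2$. The only potential non-trivial two-step chain that can arise is $x<c<y$ with $x\in A$ and $y\in B$, and $x<y$ is already imposed, so transitivity is automatic. The resulting random incomparability graph $G_n$ has the distribution of the stochastic block model corresponding to $W^2_a$, so $G_n\to W^2_a$ almost surely in the graph-limit sense, and hence there is a deterministic sequence in $\incomp$ with this limit. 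Since $\Ent(W^2_a)=1-1/2$ already equals the maximum entropy of any element of $\limitsx{\incomp}$ by \refT{th:maxlimits}, we conclude $W^2_a\in\maxlimits{\incomp}$ for every $a\in[0,1]$, completing the proof.
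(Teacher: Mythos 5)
Your proposal is correct and follows essentially the same route as the paper: the forward inclusion via Theorem~\ref{th:general} with $r=\col(\incomp)=2$ after checking $G_3,B_3\notin\incomp$, and the reverse inclusion via a three-block random poset whose incomparability graph is the stochastic block model for $W^2_a$ (the paper phrases this complementarily, as a transitive orientation of $G(n,1-W^2_a)$, but it is the same construction). Your self-contained substitutes for the paper's citations all check out: $C_5\in\CC(3,s)$ for all $s$ with $C_5$ imperfect gives $\col(\incomp)\le2$ in place of Lemma~\ref{le:Ginclusion}, and the case analysis showing the prism $\overline{C_6}$ and the net $\overline{G_3}$ admit no transitive orientation (the matching/pendant edge at the middle vertex of any linear order on a triangle is contradictory in both directions) replaces the appeal to Gallai.
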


\begin{theorem}\label{th:2cliquelimits}
Up to equivalence of graphons,
\[
	\maxlimits{\mathcal{TCG}} = \bigl\{ W^1_a \, : \, a \in [0, 1] \bigr\}.
\]
\end{theorem}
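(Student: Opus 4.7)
The plan is to bypass \refT{th:general} (which in any case does not apply since a routine check gives $\col(\mathcal{TCG}) = 1$) and instead describe $\limitsx{\mathcal{TCG}}$ completely. The key observation is that $\mathcal{TCG}$ carries only one nontrivial degree of freedom: every two-clique graph on $n$ vertices is isomorphic to $K_p \sqcup K_{n-p}$ for some $0 \le p \le n/2$, and hence is determined up to isomorphism by the single parameter $a := p/n \in [0, 1/2]$.

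Given this, I would proceed as follows. Let $G_n \in \mathcal{TCG}$ with $|V(G_n)| \to \infty$ and $G_n \to \Gamma$. Write $G_n \cong K_{p_n} \sqcup K_{n-p_n}$ and set $a_n := p_n/n \in [0, 1/2]$; by compactness, pass to a subsequence with $a_n \to a \in [0, 1/2]$. If the vertices of $G_n$ are ordered so that the smaller clique comes first, the associated graphon $W_{G_n}$ is $1$ on $[0, a_n)^2 \cup [a_n, 1)^2$ and $0$ elsewhere, which converges to $W^1_a$ in $L^1$ (and hence in cut norm). Thus $\Gamma$ is represented by $W^1_a$, and since $(G_n)$ was convergent the value of $a$ is independent of the chosen subsequence (modulo the symmetry $a \leftrightarrow 1-a$, which is absorbed into the relation $W^1_a \cong W^1_{1-a}$ recorded in Section~\ref{se:prelims}).

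For the reverse inclusion I would simply exhibit $W^1_a$ as a limit: the sequence $K_{\lfloor an \rfloor} \sqcup K_{n - \lfloor an \rfloor}$ lies in $\mathcal{TCG}$ and, by the same computation, converges to $W^1_a$. Combining both directions gives $\limitsx{\mathcal{TCG}} = \bigl\{W^1_a : a \in [0, 1]\bigr\}$ up to equivalence. To pass from $\limitsx$ to $\maxlimits$, note that each $W^1_a$ lies in $R_1$ and hence has entropy $1 - 1/1 = 0$; so every graph limit of $\mathcal{TCG}$ attains the same (maximum) entropy, and $\maxlimits{\mathcal{TCG}} = \limitsx{\mathcal{TCG}}$, proving the theorem.

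There is no serious obstacle here: the whole argument is essentially bookkeeping once one notices that two-clique graphs are classified by a single real parameter. The only small care needed is to verify the $L^1$-convergence $W_{G_n} \to W^1_a$ with a consistent ordering of vertices, which in this bounded-block setting is immediate.
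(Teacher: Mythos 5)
Your proof is correct, and it follows exactly the route the paper has in mind: the paper omits the proof as trivial but remarks that in fact $\limitsx{\mathcal{TCG}} = \maxlimits{\mathcal{TCG}} = \{W^1_a : a \in [0,1]\}$, which is precisely the stronger statement you establish by parametrizing $K_p \sqcup K_{n-p}$ by $a = p/n$. Nothing further is needed.
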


The proof of Theorem~\ref{th:2cliquelimits} is trivial, so we omit it.
(The other theorems are proved later.) 
In fact, it is  easy to see that 
$\limitsx{\mathcal{TCG}} = \maxlimits{\mathcal{TCG}} = 
\bigl\{ W^1_a \, : \, a \in [0, 1] \bigr\}$.
For the graph classes $\P$
in Theorems \ref{th:stringlimits}--\ref{th:incomplimits},
$\limitsx\P\supsetneq\maxlimits\P$, i.e., these classes have graph limits
that do not have maximum entropy. We leave it as an open problem to classify all
graph limits for these classes.

\subsection{Results from Graph Limit Theory}\label{se:tools}

Now we assemble the tools from graph limit theory that we will need in order to prove the results in Section~\ref{se:results}.

%Given a graph~$G$ on $n$ vertices with adjacency matrix~$A$, we define a corresponding graphon~$W_G$ as follows: if $x \in [(i - 1)/n, i/n$ and $y \in [(j - 1)/n, j/n)$, we let $W(x, y) = A(i, j)$, and we let $W(x, y) = 0$, otherwise.

Let $W : [0, 1]^2 \to [0, 1]$ be a graphon and let $G$ 
be  a graph with vertex set~$[n]$.
We define 
\begin{equation}\label{psiqfw}
  \psiqgw(\xxn)=\prod_{ij\in E(G)} W(x_i,x_j) 
\prod_{ij\not\in E(G)}\bigpar{1- W(x_i,x_j)}
\end{equation}
and recall that the induced subgraph density of $G$ in $W$ is defined as
\begin{equation}\label{tindfw}
  \tind(G,W)=\int_{\oi^n}\psiqgw.
\end{equation}

We further say that the graph $G$
 is \emph{$W$-constructible} if there exist (not
necessarily distinct) points $x_1$, \dots, $x_n \in [0, 1]$ such that 
$\psiqgw(x_1,\dots,x_n)>0$, i.e., more explicitly,
\begin{align}
W(x_i, x_j) = 0 \quad &\Longrightarrow \quad ij \notin E(G) \label{eq:Wconst0}\\
\intertext{and}
W(x_i, x_j) = 1 \quad &\Longrightarrow \quad ij \in E(G). \label{eq:Wconst1}
\end{align}
(If $0 < W(x_i, x_j) < 1$, then there is no restriction on~$ij$.)  If
$\mathbf{x} = (x_1, \dots, x_n)$ is such that \eqref{eq:Wconst0}
and~\eqref{eq:Wconst1} hold, 
i.e.\ $\psiqgw(\xx)>0$,
then we say that $\mathbf{x}$ is a
\emph{witnessing vector for $G$}.

Given a graphon~$W$, there is a standard definition
of a $W$-random graph $G(n, W)$: let $X_1, X_2, \dots$ be an i.i.d.~sequence of uniform random variables in $\oi$.  For each~$n$, $G(n, W)$ is a graph with vertex set~$[n]$ in which the edge~$ij$ is present with probability~$W(X_i, X_j)$, independently of all other edges.  By~\cite[Proposition 11.32]{LovaszBook}, $G(n, W) \to W$ almost surely.

\begin{remark}
It follows from~\eqref{tindfw} that
for every graph~$G$ with vertex set $[n]$,
\begin{equation*}\label{eq:Wdistrib}
	\Pr(G(n, W) = G) = p(G; W).
\end{equation*}
Hence,
\begin{equation}\label{eq:0as}
	\Pr(G(n, W) = G) = 0
\iff \tind(G,W)=0
\iff
  \psiqgw(\xx)=0
\quad\text{for a.e.\ } \xx.
\end{equation}
This is thus a condition for a.e.\ $\xx$, while we have defined $G$ to be
not $W$-constructible if $\psiqgw(\xx)=0$ for \emph{every} $\xx$. This looks
like a minor technical difference, and indeed it is 
(although this turns out to be non-trivial to prove).
Petrov's general removal lemma \cite[Theorem 1]{petrov}
shows that if the properties in
\eqref{eq:0as} hold, then $W$ can be modified on a null set such that then
$\psiqgw=0$ everywhere, i.e., $G$ is not $W$-constructible.
%see Section \ref{se:removal}.
(Note that the properties in \eqref{eq:0as} are preserved if $W$ is modified
on a null set, and more generally if $W$ is replaced by an equivalent
graphon, but the property 
that $G$ is (not) $W$-constructible is not.)  
\end{remark}

Let $\cP$ be a hereditary class.
It is easily seen, see \cite[Theorem 3.3]{Jan13b}, that if $W$ is a graphon,
then $W\in\hcP$ if and only if $\tind(G;W)=0$ for every graph $G\notin\cP$.
In particular, see
Diaconis, Holmes, and Janson~\cite[Theorem 3.2]{DHJ},
if $\mathcal{H}$ is a set of graphs and $\Q = \Forb^*(\mathcal{H})$,
then $\Gamma \in \limitsx{\Q}$ if and only if $p(H; \Gamma) = 0$ for all $H
\in \mathcal{H}$. 
We will need the following extension of this,
which is a consequence of 
Petrov's removal lemma~\cite[Theorem 1]{petrov},
see Section \ref{se:removal}.
%We give a proof in Section \ref{se:removal}.

\begin{lemma}\label{le:magic}
Let $\PP$ be a hereditary class of graphs.
If\/
$\Gamma \in \widehat{\PP}$, then there exists a graphon~$W$ representing
$\Gamma$ such that if a graph~$G$ %with vertex set~$[n]$
is
$W$-constructible, then $G \in \PP$.  
Moreover, if\/ $\Gamma \in
\maxlimits{\PP}$  and  $r = \col(\PP)$, 
then there exists~$W \in R_r$ such that if $G$ is $W$-constructible, then $G \in \PP$.  
\end{lemma}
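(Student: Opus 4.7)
The plan is to start from any graphon $W_0$ representing $\Gamma$ and use Petrov's removal lemma \cite[Theorem 1]{petrov} to turn the almost-everywhere vanishing of $\psiqgx{W_0}$ for every forbidden graph $G$ into pointwise vanishing. Since $\Gamma\in\hcP$, the standard characterization of hereditary limits (\cite[Theorem 3.3]{Jan13b}, cited above) gives $\tind(G;W_0)=0$ for every $G\notin\PP$, and by \eqref{eq:0as} this means that the bad set $A_G:=\{\xx\in\oi^{|V(G)|}:\psiqgx{W_0}(\xx)>0\}$ has Lebesgue measure zero for each such $G$.

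The next step is to modify $W_0$ on a null set so that $\psiqgx{W}$ vanishes pointwise for all such $G$ simultaneously. For a single $G$ this is exactly the content of Petrov's removal lemma as recalled in the remark preceding the lemma. Because only countably many isomorphism classes of graphs lie outside $\PP$, I would apply Petrov's lemma to this countable family---either in one step, if the general form of the lemma allows it, or iteratively while taking the union of the resulting null sets---to obtain a single null set $N\subset\oi$ and a graphon $W$ with $W=W_0$ on $(\oi\setminus N)^2$ and $\psiqgx{W}(\xx)=0$ for every $\xx$ and every $G\notin\PP$. Then $W\cong W_0$, so $W$ still represents $\Gamma$, and by construction every $W$-constructible graph belongs to $\PP$; this proves the first assertion.

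For the ``moreover'' part, \refT{th:maxlimits} produces a representing graphon $W_0\in R_r$, and I would run the same argument starting from this particular $W_0$. The twist is that the modified graphon must be redefined on the exceptional null set in a way compatible with the pointwise definition of $R_r$: force $W=1/2$ on the parts of $(N\times\oi)\cup(\oi\times N)$ lying in off-diagonal blocks $I_i\times I_j$ with $i\neq j$, and choose $\{0,1\}$-values on the parts lying in diagonal blocks $I_i\times I_i$.

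The main obstacle is verifying that the $\{0,1\}$-valued modification on the diagonal parts of $N$ can be made so that Petrov's conclusion $\psiqgx{W}(\xx)=0$ for all $\xx$ survives. The observation I would exploit is that off-diagonal entries of $W_0$ equal $1/2$ and hence impose no constraint on constructibility; whether a graph is $W_0$-constructible is determined entirely by the $\{0,1\}$-structure on the union of the diagonal blocks. This should reduce the problem to applying Petrov's lemma block by block, within each $\{0,1\}$-valued restriction $W_0|_{I_i\times I_i}$, producing a block-wise modification on a null set that keeps $W\in R_r$ while ensuring that no $G\notin\PP$ has a witnessing vector with coordinates in $N$. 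Managing this block-wise reduction---and in particular checking that it also kills witnessing vectors whose coordinates lie in several different blocks, some of which may fall inside $N$---is what I expect to be the technically delicate part of the argument.
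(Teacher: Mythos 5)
Your first step is essentially the paper's: since only countably many (isomorphism classes of) graphs lie outside $\PP$, one applies Petrov's removal lemma \emph{once} to the single closed set $M=\bigcap_{G\notin\PP}M_G$, where $M_G\subseteq\oi^{\cI}$ encodes the vanishing of the product defining $\psiqgx{W}$. Note that the iterative variant you offer as a fallback does not work: a second application of the removal lemma modifies the graphon on a further null set and can destroy the \emph{everywhere}-vanishing already achieved for earlier graphs, so the countable family must be handled in one shot (which the statement of Petrov's theorem permits, a countable intersection of closed sets being closed). Also, the exceptional set produced is a null subset of $\oi^2$, not necessarily of the form $(N\times\oi)\cup(\oi\times N)$ for a null $N\subseteq\oi$. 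With these corrections your proof of the first assertion is sound and matches the paper's.

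The ``moreover'' part is where the genuine gaps lie, and you have located them without closing them. First, off-diagonal: after the removal step the modified graphon $W'$ equals $1/2$ only a.e.\ on $\bigcup_{i\neq j}I_i\times I_j$, and your plan to ``force $W=1/2$'' on the exceptional off-diagonal set is precisely the dangerous move. If Petrov's modification happened to place a value $0$ or $1$ at some off-diagonal point, and that value is the zero factor killing $\psiqgx{W'}(\xx)$ for some forbidden $G$, then resetting it to $1/2$ creates a witnessing vector for $G$. The paper resolves this by opening up Petrov's construction (Theorem~\ref{Tfree}, final statement): $W'$ agrees with $W$ on a set $Y$ of density points of $W$, and since $W$ is constant $1/2$ on each open off-diagonal rectangle, every interior point of such a rectangle lies in $Y$; hence $W'=1/2$ there automatically, and the measure-zero boundary is absorbed by a measure-preserving relabeling. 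Second, diagonal: your block-by-block application of Petrov does not control witnessing vectors with coordinates in several blocks, as you yourself admit, and it is not needed. After the single global removal step, $W'\in\{0,1\}$ a.e.\ on each $I_i\times I_i$, and one simply rounds every residual value in $(0,1)$ there to $0$. This keeps the graphon in $R_r$, changes $W'$ only on a null set, and preserves complete $G$-freeness for an elementary reason: any vanishing product $\psiqgx{W'}(\xx)=0$ has a zero factor, that factor comes from an entry with $W'(x_i,x_j)\in\{0,1\}$, and such entries are untouched by the rounding. So the two missing ingredients are a density-point argument inside Petrov's proof (or the more elaborate construction sketched in the paper's remark after Theorem~\ref{Tfree}) for the off-diagonal blocks, and the rounding observation for the diagonal blocks.
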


We conclude this section with a special class of graphons
that will be needed in the proof of Theorem~\ref{th:general}. 
A \emph{disjoint clique graphon} is a graphon of the form~$W=\sum_{\ga\in \cA} \mathbf{1}_{A_\ga \times A_\ga}$
for a family~$(A_\ga)_{\ga\in\cA}$  of non-empty
pairwise disjoint  subsets of~$\oi$.
Here the index set $\cA$ may be finite, countably infinite or uncountable.
We say that $W$ has \emph{parts} $A_\ga$, $\ga\in\cA$.

Since a graphon is assumed to be (Lebesgue) measurable,
the section $\set{y:W(x,y)=1}$ is measurable for a.e.\ $x\in\oi$.
In particular, if $W$ is a disjoint clique graphon and $A_\ga$ one of its
parts, then either $A_\ga$ is a null set, and thus measurable, or there
exists $x\in A_\ga$ such that
$\set{y:W(x,y)=1}$ is measurable. Since the latter set equals $A_\ga$, we
see that in both cases $A_\ga$ is measurable. In other words, the parts of a
disjoint clique graphon are all measurable.

\begin{remark}
  It is easy to see that up to equivalence, we may eliminate all parts with
  measure 0, leaving only a countable number of parts $A_n$. 
Moreover, up to
  equivalence, a disjoint clique graphon is uniquely determined by the 
sequence $|A_n|$ of measures of these parts, arranged in (weakly) decreasing
order. It is also easy to see that if
$\DC$ is the family of 
\emph{disjoint clique graphs}, i.e.~graphs that are disjoint unions of cliques,
then the set of all disjoint clique graphons 
equals
$\limitsx{\DC}$.
See further 
Janson~\cite[Section 7]{Jan13b}.
%It is easy to see
%that $\DC$ is hereditary and that $\DC = \Forb^*(P_3)$. 
\end{remark}

The rest of this paper is organized as follows.
%In Section~\ref{se:tools}, we collect results from the theory of graph limits that we will need to prove our results.
In Section~\ref{se:proofgeneral}, we prove
Theorem~\ref{th:general}.  In Sections \ref{se:stringproof},~\ref{se:outerstringproof}, and~\ref{se:incompproof}, we prove Theorems \ref{th:stringlimits},~\ref{th:outerstringlimits}, and~\ref{th:incomplimits}, respectively.  In Section~\ref{se:random},
we derive one more result each about the structure of typical string graphs 
and the structure of typical outer-string graphs, and make several
conjectures.  Finally, in Section~\ref{se:removal}, we prove
Lemma~\ref{le:magic}.

\section{Proof of Theorem \ref{th:general}}\label{se:proofgeneral}

The plan of the proof of Theorem~\ref{th:general} is straightforward.  We
want to show that if $\PP$ is as in the statement of the theorem, then any
maximum-entropy element~$\Gamma$ of~$\widehat{\PP}$ can be represented by a
graphon~$W$ that has a number of desirable properties, eventually leading to
the conclusion that $W\cong W_a^r$ for some $a$.
We use Lemma~\ref{le:magic} to find a ``nice'' version of $W$; then, 
in each case, we
will show that if $W$ does not have the desired property, then
this implies that $\PP$ contains at least~one of the graphs
$G_{r+1}$,~$B_{r+1}$, and~$H_{r+1}$, which is a contradiction. 

\begin{proof}[Proof of Theorem~\ref{th:general}.]
Suppose that $\Gamma \in \maxlimits{\PP}$ has entropy~$1 - 1/r$
%By Theorem~\ref{th:maxlimits}, $\Gamma$ can be represented by a graphon~$W \in R_r$.
and let $W \in R_r$ be the graphon 
representing $\Gamma$ whose existence is guaranteed by Lemma~\ref{le:magic}.
Thus, every graph that is
$W$-constructible belongs to $\PP$. 
In particular, our assumption on $\PP$ implies that $G_{r+1}$, $B_{r+1}$, and
$H_{r+1}$ are \emph{not} $W$-constructible. 
For (notational) convenience, we let all graphons be defined on $\ooi$ in
this section.

Let $W_i$ denote the restriction of~$W$ to~$I_i \times I_i$.  
By rescaling the interval~$I_i$ to~$\ooi$, we may regard $W_i$ as a graphon.
Recall that by the definition of $R_r$, $W_i$ takes only the values 0 and 1.

\Claim{At most one of the sets~$I_i$ can contain some point~$x$ such that
  $W(x,x) = 0$.}\label{cl00}  
If not, then without loss of generality there exist $x \in I_1$ and $y \in
I_2$ such that $W(x,x) = W(y,y) = 0$.  Let $F$ be a bipartite graph.  If
$(A, B)$ is a bipartition of~$V(F)$ with $\abs{A} = n_1$ and $\abs{B} =
n_2$, then we can construct a witnessing vector for $F$ by choosing $n_1$
copies of~$x$ and $n_2$ copies of~$y$.  Thus, we may conclude that $F$ is
$W$-constructible.   In particular, $B_{r+1}$ is $W$-constructible, 
%which by our choice of $W$ implies that $B_{r+1} \in \PP$, 
which is a contradiction. 

\Claim{$W = 1$ along the diagonal.}\label{cl1}
If not, then without loss of generality there exists~$z \in I_1$ such
that $W(z,z) = 0$. We then partition the vertex set of~$G_{r+1}$ as follows:
let $V_1 = \{1, 2, 3\}$, let $V_2 = \{\set{1,2}, \set{1,3}, \set{2,3}\}$,
and for 
$\ell = 3$, \dots, $r$, let 
$V_{\ell} = \{\set{\ell + 1, j} : 1 \leq j \leq \ell
+ 1\}$.  (Here, we write $\set{i, i}$ for $\{i\}$.)  Observe that each
of $V_2$, \dots, $V_r$ induces a complete graph, while $V_1$ induces an
empty graph.  For $j = 2$, \dots, $r$, let $v_j \in I_j$ be arbitrary.  
By Claim \ref{cl00}, for each~$j$, we have $W(v_j, v_j) = 1$.  Then we
can construct a witnessing vector for $G_{r+1}$ by choosing three copies
of~$z$ and $j + 1$ copies of the point $v_j$ for $j = 2$, \dots, $r$. 
Thus, $G_{r+1}$ is $W$-constructible, 
%so $G_{r+1} \in \PP$, 
which is a contradiction.

\Claim{If\/ $x,y,z\in I_1$ and $W(x,y)=W(y,z)=1$, then $W(x,z)=1$.}
\label{cl111}
We partition the vertex set of~$G_{r+1}$ in a different way.
Let $U_1 = \{1, 2, \set{1,2}\}$ and for $\ell = 2$, \dots, $r$, let
$U_{\ell} = \{\set{\ell + 1, j} : 1 \leq j \leq \ell + 1\}$.  Observe that
$U_1$ induces a copy of~$P_3$ and that each of $U_2$, \dots, $U_r$ induces a
complete graph.  
Suppose that there exist $x,y,z \in I_1$ such
that $W(x, y) = W(y, z) = 1$ but $W(x, z) = 0$.  For $\ell = 2$, \dots, $r$,
let $u_{\ell} \in I_{\ell}$ be arbitrary and note that, by Claim \ref{cl1},
 $W(u_{\ell}, u_{\ell}) = 1$.  Then the vector
consisting of $x$,~$y$,~$z$, and $\ell + 1$ copies of each point~$u_{\ell}$
is a witnessing vector for $G_{r+1}$, 
which implies that $G_{r+1}$ is $W$-constructible.
This contradiction proves the claim.

\Claim{Each $W_i$ is a disjoint clique graphon 
$\sum_{\ga\in\cA_i} \mathbf{1}_{A_{i,\ga} \times A_{i,\ga}}$, 
rescaled to $I_i$.}
\label{clpartition}
It suffices to consider $i=1$.
The relation on $I_1$ defined by $x\equiv y$ if $W(x,y)=1$ is 
symmetric since $W$ is, 
reflexive by Claim \ref{cl1}, and transitive by Claim \ref{cl111}; hence it
is an equivalence relation.
Denote the equivalence classes by $(A_\ga)_{\ga\in\cA}$; then 
$W_1=\sum_{\ga\in\cA}  \mathbf{1}_{A_\ga \times A_\ga}$.

\Claim{Each $W_i$ has at most two parts $A_{i,\ga}$.}\label{cl2}
If not, then without loss of generality $W_1$ has at least three parts
$A_1,A_2,A_3$.
We partition
the vertex set of~$G_{r+1}$ using the partition~$(V_1, \dots, V_r)$ defined
in the proof of Claim \ref{cl1} above.  
Let $a_i \in A_i$ for $i = 1$,~$2$,~$3$ and observe that 
$W(a_i,a_j) = \delta_{ij}$.  For $j = 2$, \dots, $r$, let $v_j \in I_j$ be
arbitrary.  Observe that the vector consisting of $a_1$, $a_2$, $a_3$, and
$j + 1$ copies of each point~$v_{j}$ is a witnessing vector for $G_{r+1}$.
Thus $G_{r+1}$ is $W$-constructible, which is a contradiction.

\Claim{$W = 1$ on at least $r - 1$ of the squares $I_i \times I_i$.}
\label{clr-1}
If not, there are at least two
restrictions~$W_i$ 
such that the partitions $(A_{i,\ga})$ in Claim \ref{clpartition}
have at least two parts, i.e., $|\cA_i|\ge2$.
Suppose without loss of generality that 
$|\cA_1|,|\cA_2|\ge2$, and let $A_1,A_2$ be parts of $W_1$ and $B_1,B_2$ be
parts of $W_2$.
First, suppose that $r \geq 3$.  We claim that
$H_{r+1}$ is $W$-constructible.  To show this, we partition $V(H_{r+1})$ as
follows.  Let $X'_1 = \{1, \set{1,3}\}$, let $X''_1 = \{\set{2,3}\}$, let $X'_2
= \{2, \set{1,2}\}$, and let $X''_2 = \{3\}$.  For $\ell = 3$, \dots, $r$,
let $X_{\ell} = \{\set{\ell + 1, j} : 1 \leq j \leq \ell + 1\}$. Observe that
the sets $X'_1$, $X''_1$, $X'_2$, $X''_2$, $X_3$, \dots, $X_r$ all induce
complete subgraphs, that there are no edges between $X'_1$ and $X''_1$, and
that there are no edges between $X'_2$ and $X''_2$.  Let $x'_1 \in A_1$, let
$x''_1 \in A_2$, let $x'_2 \in B_1$, let $x''_2 \in B_2$, and, for $\ell =
3$, \dots, $r$, let $x_{\ell} \in I_{\ell}$.  Let $\mathbf{x}$ denote the
vector consisting of two copies of~$x'_1$, one copy of~$x''_1$, two copies
of~$x'_2$, one copy of~$x''_2$, and  $\ell + 1$ copies of~$x_{\ell}$ for
$\ell = 3$, \dots, $r$.  Observe that $\mathbf{x}$ is a witnessing vector
for $H_{r+1}$.
Thus $H_{r+1}$ is $W$-constructible,
which is a contradiction.  If $r = 2$, we may repeat the argument above to show that $B_3$ is $W$-constructible, which is again a contradiction. This proves the claim.

To summarize, we have shown that if $\Gamma \in \maxlimits{\PP}$, then
$\Gamma$ may be represented by a graphon~$W \in R_r$ such that there exists
a measurable set~$A_1 \subseteq I_1$  such that $W = 1$ on $(A_1 \times A_1)
\cup ((I_1 \setminus A_1) \times (I_1 \setminus A_1))$, $W = 0$ on the rest
of $I_1 \times I_1$, and $W = 1$ on $\cup_{j=2}^r I_j \times I_j$.  That is,
$W \cong W^r_a$ for some~$a \in [0, 1]$, as claimed. 
\end{proof}

\section{Proof of Theorem~\ref{th:stringlimits}}\label{se:stringproof}

We begin by showing that the class of string graphs satisfies the hypotheses of Theorem~\ref{th:general}.

\begin{lemma}\label{le:stringforb}
	The graphs $G_5$,~$B_5$, and~$H_5$ are not string graphs.
\end{lemma}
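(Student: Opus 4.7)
The plan is to exhibit a common topological obstruction shared by all three graphs and derive a contradiction with the non-planarity of $K_5$.

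First I will record the key structural features common to $G_5$, $B_5$, and $H_5$: in each of these graphs, (a) the five singleton vertices $v_{\{1\}},\dots,v_{\{5\}}$ form an independent set, (b) $v_{\{i\}}\sim v_{\{j,l\}}$ if and only if $i\in\{j,l\}$, and (c) whenever $\{i,j\}$ and $\{l,m\}$ are disjoint pairs in $[5]$, the vertices $v_{\{i,j\}}$ and $v_{\{l,m\}}$ are non-adjacent. Properties (a) and (b) are immediate from the three definitions. Property (c) holds trivially for $G_5$ and $B_5$ since disjoint pairs have empty intersection, and it holds for $H_5$ because the only pair--pair edges of $G_5$ that were deleted are those among $v_{\{1,2\}},v_{\{1,3\}},v_{\{2,3\}}$, which are pairwise \emph{non}-disjoint.

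Assuming for contradiction that some $F\in\{G_5,B_5,H_5\}$ is a string graph, I will use the equivalent characterization by arcwise-connected sets (proved in Appendix~\ref{se:arcwiserep}) and write $A_i$ for the set representing $v_{\{i\}}$ and $B_{ij}$ for the set representing $v_{\{i,j\}}$. I then choose points $p_i\in A_i$ and, for each pair, $q^i_{ij}\in A_i\cap B_{ij}$ and $q^j_{ij}\in A_j\cap B_{ij}$, and I join $p_i$ to $q^i_{ij}$ by a simple arc inside $A_i$, $q^i_{ij}$ to $q^j_{ij}$ by a simple arc inside $B_{ij}$, and $q^j_{ij}$ to $p_j$ by a simple arc inside $A_j$ (using arcwise-connectedness and the standard fact that a path-connected continuum in the plane contains a simple arc between any two of its points). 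Concatenating yields a curve $\gamma_{ij}$ from $p_i$ to $p_j$ contained in $A_i\cup B_{ij}\cup A_j$; the $\binom{5}{2}$ such curves, together with the vertices $p_1,\dots,p_5$, form a drawing of $K_5$ in the plane.

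The crucial point is that when $\{i,j\}$ and $\{l,m\}$ are disjoint the curves $\gamma_{ij}$ and $\gamma_{lm}$ do not meet: one lies in $A_i\cup B_{ij}\cup A_j$ and the other in $A_l\cup B_{lm}\cup A_m$, and properties (a), (b), (c) guarantee that these six sets are pairwise disjoint. Hence every pair of non-adjacent edges in this drawing of $K_5$ crosses an even number of times (namely zero), so by the weak Hanani--Tutte theorem $K_5$ would be planar, contradicting Kuratowski's theorem.

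The main technical obstacle will be making the drawing of $K_5$ regular enough for Hanani--Tutte to apply: the concatenated curves $\gamma_{ij}$ need not be simple, and two adjacent edges may share more than just their common endpoint. These issues can be handled by standard manoeuvres---passing to simple sub-arcs inside each region and then perturbing to a piecewise-linear drawing in general position---since the only thing the theorem requires is the parity of the number of crossings between each pair of non-adjacent edges, and property (c) forces this parity to be $0$ regardless of the perturbation chosen.
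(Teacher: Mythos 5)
Your proof is correct and follows essentially the same route as the paper's: build a drawing of $K_5$ from the string representation by concatenating sub-arcs through $A_i$, $B_{ij}$, $A_j$, observe that independent edges lie in disjoint unions of sets, and invoke Hanani--Tutte; your properties (a)--(c) are just an explicit unpacking of the paper's remark that the argument works for any $F$ with $B_5\subseteq F\subseteq G_5$. One tiny imprecision: the six sets $A_i,B_{ij},A_j,A_l,B_{lm},A_m$ are not \emph{pairwise} disjoint (e.g.\ $A_i\cap B_{ij}\neq\emptyset$); what (a)--(c) actually give, and all you need, is that each set in the first triple is disjoint from each set in the second.
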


\begin{proof}
The result for~$B_5$ has been discovered several
times~\cite{Sinden,EET,KGK}, while the result for $G_5$ was shown
in~\cite[Lemma 3.2]{PT06}.  The argument that we give is a slight
modification of the proof used in~\cite{PT06}.
(It actually works for any graph~$F$ with $B_5\subseteq F\subseteq G_{5}$.)

Suppose that any of $G_5$,~$B_5$, and~$H_5$ had a string representation
\set{A_i,A_{ij}}.
Then, 
we could select points $v_i \in A_i$ for each $i$ and~
points  $v_{ij} \in A_i \cap A_{ij}$, 
for each $i$ and~$j$ with $i\neq j$, 
Then,
for each $i$ and~$j$, we could let $e_{ij}$ be a curve consisting of the
portion of~$A_i$ between $v_i$ and $v_{ij}$, the portion of~$A_{ij}$ between
$v_{ij}$ and $v_{ji}$, and the portion of~$A_j$ between $v_{ji}$ and
$v_{j}$.  Because $e_{ij} \subseteq A_i \cup A_j \cup A_{ij}$, we have
$e_{ij} \cap e_{k\ell} = \emptyset$ if $i$, $j$, $k$, and $\ell$ are all
distinct.  Hence, the points~$v_i$ and the curves~$e_{ij}$ define a drawing
of~$K_5$ in which no two independent edges cross, which contradicts the
Hanani--Tutte theorem~\cite{Hanani,Tutte}.
\end{proof}

Let $G$ be a graph and let $\covering = \{V_1, \dots, V_k\}$ be a clique covering of~$G$.  We can define a graph~$G_{\covering}$ with $V(G_{\covering}) = [k]$ and $E(G_{\covering}) = \{ij : E(V_i, V_j) \neq \emptyset\}$.  We say that a graph~$G$ admits a \emph{planar clique covering} if there is a clique covering~$\covering$ of~$G$ such that $G_{\covering}$ is planar.  Similarly, we say that $G$ admits an \emph{outerplanar clique covering} if there is a clique covering~$\covering$ of~$G$ such that $G_{\covering}$ is outerplanar.

\begin{proof}[Proof of Theorem~\ref{th:stringlimits}.]
It is shown in~\cite{PT06} that $\col(\SG) = 4$ (see also Lemma~\ref{le:Ginclusion}).  This,
Lemma~\ref{le:stringforb}, and Theorem~\ref{th:general} imply that
$\maxlimits{\SG} \subseteq \{ W^4_a \, : \, a \in [0, 1] \}$ up to
equivalence of graphons. 

In the opposite direction, it is shown in~\cite[Example 2.4]{HJS} that
$W^4_{a} \in \maxlimits{\SG}$ for every $a\in\oi$.
For later use, we repeat the argument.

It is shown in~\cite[Corollary 2.7]{KGK} that $\CC(4,4) \subseteq \SG$ and
more generally in~\cite[Theorem 2.3]{KGK} that every graph that admits a
planar clique covering is a string graph.  (See also \cite[Example 2.4]{HJS}.)
Let $a \in [0,1]$ and, for each~$n$, let $G_n = G(n, W^4_a)$, where $G(n, W^4_a)$ is the $W^4_a$-random graph defined in Section~\ref{se:tools}.  Let $V'_1 = \set{i : X_i \in [0, a/4)}$, let $V''_1 = \set{i : X_i \in [a/4, 1/4)}$, and, for $j = 2, 3, 4$, let $V_j = \set{i : X_i \in I_j}$.  Observe that with probability~one, $V'_1$, $V''_1$, $V_2$, $V_3$, and $V_4$ are all cliques and there are no edges between $V'_1$ and $V''_1$.
%be a graph on $n$ vertices
% consisting of cliques $V_1$,~$V_2$,~$V_3$,~$V'_4$, and~$V''_4$
% where $\abs{V_1} = \abs{V_2} = \abs{V_3} = n/4$, $\abs{V'_4} =
% an/4$, and $\abs{V''_4} = (1-a)n/4$ (suitably rounded to integers),
% such that there are no edges
% between $V'_4$ and $V''_4$ and such that all other edges are
% present independently with probability~$1/2$.
This means that each $G_n$ almost surely admits a planar clique covering~$\covering$ with 
$G_{\covering} =  K_5 - e$, which means that each $G_n$ is a string graph.  Because $G_n \to W^4_a$ almost surely, it follows
that $W^4_a \in \limitsx{\SG} \cap R_4$, which, by
Theorem~\ref{th:maxlimits}, implies that $W^4_a \in
\maxlimits{SG}$, as claimed.  
This completes the proof.
\end{proof}

\begin{remark}\label{re:stringspeed}
Because $\col(\SG) = 4$, it follows from Theorem~\ref{th:maxlimits} that
\begin{equation}\label{eq:stringspeed}
	\abs{\SG_n} = 2^{\left(\frac{3}{4} + o(1)\right)\binom{n}{2}},  
\end{equation}
as proved by Pach and T\'oth \cite{PT06}.
\end{remark}

A graph~$G$ is a \emph{convex intersection graph} if it has an intersection
representation consisting of convex sets in the plane.  Let $\CI$ denote the
class of convex intersection graphs.
It is easy to see that $\CI$ is
hereditary and that $\CI \subseteq \SG$.  It is shown in~\cite[Proposition
  8.3.1]{KGK} that $\CI$ is in fact a proper subclass of~$\SG$.  
However, the next result shows that $\CI$ has the same asymptotic speed as the larger class~$\SG$ and furthermore shows that $\CI$ has the same maximum-entropy graph limits as $\SG$, i.e. $\maxlimits{\CI}=\maxlimits{\SG}$.
(The sets of all graph limits for these classes differ,
i.e., $\limitsx{\CI}\subsetneq\limitsx{\SG}$. For example, if $G\in\SG\setminus\CI$,
then the adjacency matrix of $G$ defines a graphon, and thus a graph limit,
that easily is seen to belong to
$\limitsx{\SG}\setminus\limitsx{\CI}$, cf.\ \cite[Section 3]{Jan13b} and
\cite[Proposition 4.10]{LovaszSzegedy:regularity}.)

\begin{theorem}\label{th:convexlimits}
We have
\begin{equation}\label{eq:convexcoloringnumber}
	\col(\CI) = 4
\end{equation}
and
\begin{equation}\label{eq:convexspeed}
	\abs{\CI_n} = 2^{\left(\frac{3}{4} + o(1)\right)\binom{n}{2}}.
\end{equation}
Furthermore, up to equivalence of graphons,
\begin{equation}\label{eq:convexlimits}
  \maxlimits{\CI} = \{ W^4_a : a \in [0, 1] \}. 
\end{equation}
\end{theorem}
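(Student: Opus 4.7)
The plan is to deduce all three claims from the single inclusion
\[
  \{W^4_a : a \in \oi\} \subseteq \limitsx{\CI}.
\]
Once this is in hand, everything else follows from Theorem~\ref{th:maxlimits}: the inclusion gives $\max_{\Gamma \in \limitsx{\CI}} \Ent(\Gamma) \geq 3/4$, hence $\col(\CI) \geq 4$, while the reverse inequality $\col(\CI) \leq \col(\SG) = 4$ is immediate from $\CI \subseteq \SG$, yielding~\eqref{eq:convexcoloringnumber}. Then~\eqref{eq:convexspeed} follows directly from~\eqref{eq:speed}. Finally, using $\col(\CI) = 4$ together with $\limitsx{\CI} \subseteq \limitsx{\SG}$ and Theorem~\ref{th:stringlimits}, we obtain $\maxlimits{\CI} \subseteq \limitsx{\CI} \cap R_4 \subseteq \limitsx{\SG} \cap R_4 = \maxlimits{\SG} = \{W^4_a : a \in \oi\}$ up to equivalence, and the reverse inclusion comes from the displayed inclusion, giving~\eqref{eq:convexlimits}.

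To prove the key inclusion, I would imitate the last paragraph of the proof of Theorem~\ref{th:stringlimits}. Fix $a \in \oi$, set $G_n = G(n, W^4_a)$, and observe that almost surely the vertex set of $G_n$ partitions into five cliques $V'_1, V''_1, V_2, V_3, V_4$ with no edges between $V'_1$ and $V''_1$. Since $G_n \to W^4_a$ almost surely, it suffices to show that every graph admitting such a five-part partition is a convex intersection graph.

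For this geometric step I would begin with five convex regions $C'_1, C''_1, C_2, C_3, C_4$ in the plane realizing the intersection pattern $K_5 - e$ (with $C'_1 \cap C''_1 = \emptyset$); two small disjoint disks for $C'_1$ and $C''_1$ together with three large disks for $C_2, C_3, C_4$, each containing both small disks, suffice. Fix an anchor point inside each region, and for every cross-clique edge of $G$ reserve a distinct ``meeting point'' inside the appropriate pairwise intersection $C_i \cap C_j$. Define each vertex's convex set as the convex hull of its clique's anchor together with the meeting points corresponding to its cross-clique neighbors. Within-clique adjacencies are then handled by the shared anchor, and prescribed cross-clique adjacencies by the shared meeting points.

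The main obstacle is to position the anchors and meeting points so that this encoding is faithful: convex hulls of non-adjacent vertices from different cliques must actually be disjoint. A naive one-dimensional placement --- all meeting points for a given pair $(i,j)$ strung along the segment joining the two anchors --- already fails, since the sub-segments produced by different vertices can overlap even when the corresponding edge is absent. The remedy is to spread the meeting points out in two dimensions, for instance placing them at distinct heights on perpendicular offsets from the anchor segments, so that each vertex's convex hull becomes a thin fan confined to a pair of narrow wedges emanating from its anchor; verifying that two such fans are disjoint unless they share a designated meeting point is then an elementary but careful piece of plane geometry. This is the technical heart of the argument.
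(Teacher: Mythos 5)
Your deductive skeleton is correct and is essentially the paper's: everything reduces to showing that every graph whose vertex set splits into five cliques realizing the pattern $K_5-e$ (equivalently, a graph with a planar clique covering) is a convex intersection graph; from that, $\col(\CI)\ge 4$, the upper bound $\col(\CI)\le\col(\SG)=4$, the speed via \eqref{eq:speed}, and the two inclusions for $\maxlimits{\CI}$ all follow exactly as you say (your route to $\maxlimits{\CI}\subseteq\{W^4_a\}$ via $\limitsx{\CI}\cap R_4\subseteq\limitsx{\SG}\cap R_4=\maxlimits{\SG}$ is a slight variant of invoking Theorem~\ref{th:general} directly, and is fine).

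The genuine gap is the geometric input itself. The paper does not prove it; it cites Kratochv\'il and Kub\v{e}na \cite{KK98} for the fact that $\CC(4,4)\subseteq\CI$ and, more generally, that every graph admitting a planar clique covering is a convex intersection graph. Your sketch of a replacement construction stops exactly where the difficulty lies, and the proposed fix does not obviously work. The problem is the arbitrary bipartite adjacency pattern between two cliques $V_i$ and $V_j$: each vertex's set must contain the convex hull of its meeting points inside the $(i,j)$-cluster, so you need a placement in which, for \emph{every} pair of subsets $S$ (the meeting points of some $v\in V_i$) and $T$ (those of some $u\in V_j$) with $S\cap T=\emptyset$, the hulls $\mathrm{conv}(S)$ and $\mathrm{conv}(T)$ are disjoint. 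Putting the points ``at distinct heights on perpendicular offsets'' does not achieve this: on a segment a point of $T$ can lie between two points of $S$, and on a strictly convex arc two disjoint subsets can interleave so that their chords cross. A correct placement has to be genuinely two-dimensional and tailored to which subsets come from which side (this is what \cite{KK98} supplies), and that is precisely the step your proposal leaves open. Either carry out such a placement in detail or, as the paper does, cite \cite{KK98}.
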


\begin{proof}
It is shown
in~\cite{KK98} that $\CC(4,4) \subseteq \CI$ and,
more generally, that every graph that
admits a planar clique covering is a convex intersection graph.  Hence, we
have $\col(\CI) \ge 4$, and 
since $\CI\subseteq\SG$, we have $\col(\CI)\le\col(\SG)=4$.
This proves \eqref{eq:convexcoloringnumber} and \eqref{eq:convexspeed}
follows by \eqref{eq:speed}. Finally, \eqref{eq:convexlimits} follows by 
the same argument as in the proof of
\refT{th:stringlimits}.
\end{proof}

\section{Proof of Theorem~\ref{th:outerstringlimits}}\label{se:outerstringproof}

\begin{lemma}\label{le:outerstringforb}
The graphs $G_4$,~$B_4$, and~$H_4$ are not outer-string graphs.
\end{lemma}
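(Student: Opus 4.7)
My plan is to follow the template of the proof of Lemma~\ref{le:stringforb}, replacing the Hanani--Tutte theorem (which rules out drawings of $K_5$ in the plane with no crossing independent edges) by its elementary outer analogue for $K_4$: no outer drawing of $K_4$ inside a disk, with vertices on the boundary, can have every pair of independent edges disjoint. The latter is immediate from the Jordan curve theorem, so the bulk of the proof is the same adjacency-counting as in the $k=5$ case.

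Assume for contradiction that $F\in\{G_4,B_4,H_4\}$ has an outer-string representation $\{A_i,A_{ij}\}$ in a disk $D$, with $A_i$ representing the singleton $v_{\{i\}}$ ($i\in[4]$) and $A_{ij}$ representing the pair $v_{\{i,j\}}$. Let $v_i\in A_i\cap\partial D$ be a designated boundary endpoint of $A_i$; the $v_i$ are pairwise distinct since $A_i\cap A_j=\emptyset$ for $i\neq j$. As in the proof of \refL{le:stringforb}, pick $v_{ij}\in A_i\cap A_{ij}$ and $v_{ji}\in A_j\cap A_{ij}$, and define $e_{ij}\subseteq A_i\cup A_{ij}\cup A_j$ to be the curve from $v_i$ to $v_j$ obtained by concatenating arcs of $A_i$, $A_{ij}$, and $A_j$ through $v_{ij}$ and $v_{ji}$.

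The crucial observation is that, for any two disjoint pairs $\{i,j\},\{k,l\}$ with $\{i,j,k,l\}=[4]$, each of the nine pairwise intersections between the sets $\{A_i,A_j,A_{ij}\}$ and $\{A_k,A_l,A_{kl}\}$ is empty in each of $G_4$, $B_4$, and $H_4$. This uses only the non-adjacencies that singletons are pairwise non-adjacent, that a singleton is non-adjacent to any pair not containing it, and that disjoint pairs are non-adjacent; each of these holds in all three graphs. In particular, the three edges deleted from $G_k$ when forming $H_k$ involve only $v_{\{1,2\}},v_{\{1,3\}},v_{\{2,3\}}$, whose labels all meet in $[3]$, so they never contribute to a disjoint-pair adjacency and can be ignored. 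Therefore $e_{ij}\cap e_{kl}=\emptyset$ whenever $\{i,j\}\cap\{k,l\}=\emptyset$.

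Consequently, the six curves $e_{ij}$ form an outer drawing of $K_4$ in $D$ with vertices on $\partial D$ and no crossing between independent edges. After relabelling so that $v_1,v_2,v_3,v_4$ appear on $\partial D$ in this cyclic order, the edges $e_{13}$ and $e_{24}$ are independent. Inside $e_{13}$ one can extract a simple arc $\alpha$ from $v_1$ to $v_3$ (since $e_{13}$ is the continuous image of $[0,1]$); such an $\alpha$ separates $\overline D$ into two closed regions, one containing $v_2$ and the other containing $v_4$, so any curve in $\overline D$ from $v_2$ to $v_4$---in particular, $e_{24}$---must meet $\alpha\subseteq e_{13}$. This contradicts $e_{13}\cap e_{24}=\emptyset$ and completes the proof. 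The main potential snag is the topological step of extracting a simple subarc from a possibly self-intersecting continuous image, but this is a standard fact and causes no real difficulty.
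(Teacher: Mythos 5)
Your proof is correct, but it takes a genuinely different route from the paper. The paper proves this lemma by reduction: given an outer-string representation of $G_4$, $B_4$, or $H_4$ in a disk, it attaches five new curves $A_{\{5\}}, A_{\{1,5\}},\dots,A_{\{4,5\}}$ outside the disk (hooked onto the boundary endpoints of the $A_{\{i\}}$) to obtain a string representation of a graph $F$ with $B_5\subseteq F\subseteq G_5$, contradicting Lemma~\ref{le:stringforb}; so the only topology it uses is the Hanani--Tutte theorem, already invoked there. You instead argue directly inside the disk: you run the same adjacency bookkeeping as in Lemma~\ref{le:stringforb} one level down, producing an outer drawing of $K_4$ with its four vertices on $\partial D$ and no intersection between independent edges, and then observe that the two diagonals of a quadrilateral inscribed in a disk must meet. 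Your adjacency checks are all right (singletons pairwise non-adjacent, singleton non-adjacent to a pair not containing it, disjoint pairs non-adjacent, and $v_{\{i\}}\sim v_{\{i,j\}}$ hold in all three graphs, and the edges deleted in forming $H_4$ never involve a disjoint pair). What each approach buys: the paper's reduction is shorter given Lemma~\ref{le:stringforb} and iterates once more to give Lemma~\ref{le:incompforb}, whereas yours is self-contained, avoids Hanani--Tutte entirely, and makes visible why the outer condition lowers the threshold from $5$ to $4$. Two technical remarks on your final step: extracting a simple arc $\alpha\subseteq e_{13}$ from $v_1$ to $v_3$ is indeed standard; but for the separation claim you should also arrange that $\alpha$ meets the boundary circle only at its endpoints (the curves may touch $\partial D$ elsewhere), which is handled exactly as in the paper's proof by pushing the representation into a slightly larger disk and extending each $A_{\{i\}}$ radially to the new boundary; then $\alpha$ is a crosscut and separates the two boundary arcs containing $v_2$ and $v_4$.
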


\begin{proof}
Suppose to the contrary that $G_4$,~$B_4$, or~$H_4$ has an outer-string representation.  If necessary, we may assume (possibly by extending the outer-string representation to a slightly larger disk)  that all curves meet the disk at distinct points.
Then we may extend this representation to a string representation of
$G_5$,~$B_5$, or~$H_5$ by adding curves 
$A_{\{5\}}$, $A_{\set{1,5}}$, \ldots,~$A_{\set{4,5}}$ outside of the disk
with the appropriate intersection pattern. 
(See also Theorem~\ref{th:classes}.)
However, this contradicts Lemma~\ref{le:stringforb}, and the claimed result follows.
\end{proof}

We will also need the following result of Pach and T\'oth~\cite[Lemma 3.1]{PT06}.

\begin{lemma}[\cite{PT06}]\label{le:Ginclusion}
If $r \geq 1$, then $G_{r+1} \in \CC(r + 1, s)$ for $s = 0$, \dots, $r + 1$.
In particular, if $\PP$ is a hereditary property and $\PP \subseteq
\Forb^*(\set{G_{r+1}})$, then $\col(\PP) \leq r$.
\qed
\end{lemma}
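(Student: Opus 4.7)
The plan is to prove $G_{r+1}\in\CC(r+1,s)$ for every $s\in\{0,1,\ldots,r+1\}$ by reducing everything to the single assertion that $G_m$ is $m$-colorable for each $m\ge 1$, and then to deduce the bound on $\col(\PP)$ from the definition of the coloring number. For the reduction, set $V_i:=\{v_S:i\in S,\ i=\min S\}$ for $i\in[r+1]$; these sets partition $V(G_{r+1})$, and each $V_i$ is a clique because all of its members contain $i$. Given $s$, retain $V_1,\ldots,V_s$ as clique parts; the remaining vertices $V_{s+1}\cup\cdots\cup V_{r+1}$ consist of exactly the singletons and $2$-element subsets of $\{s+1,\ldots,r+1\}$, and their induced subgraph in $G_{r+1}$ is (after relabelling) a copy of $G_{r+1-s}$. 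Any proper $(r+1-s)$-coloring of that copy therefore completes a partition of $G_{r+1}$ into $s$ cliques and $r+1-s$ independent sets.

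For the $m$-colorability of $G_m$, the pair-vertices induce a copy of the line graph $L(K_m)$, so any proper edge-coloring of $K_m$ transports to a proper vertex-coloring of the pair-part. The classical value of $\chi'(K_m)$ then handles the two parities. If $m$ is even, $\chi'(K_m)=m-1$, so I would color the pairs with $m-1$ colors and give all $m$ singletons a single further color; the singletons form an independent set, and each singleton $\{i\}$ is adjacent only to pairs containing $i$, whose colors come from a disjoint palette. If $m$ is odd, $\chi'(K_m)=m$: in any proper $m$-edge-coloring each vertex $i$ of $K_m$ is incident to $m-1$ edges using $m-1$ distinct colors, leaving exactly one missing color $c(i)$, and assigning $\{i\}$ the color $c(i)$ makes $\{i\}$'s color disagree with every adjacent pair. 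Either way $G_m$ receives a proper $m$-coloring, which combined with the reduction gives the first conclusion of the lemma.

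For the ``in particular'' clause, $G_{r+1}$ is an induced subgraph of itself, so the hypothesis $\PP\subseteq\Forb^*(\{G_{r+1}\})$ forces $G_{r+1}\notin\PP$. If $\col(\PP)\ge r+1$, the definition of the coloring number would supply some $s\le r+1$ with $\CC(r+1,s)\subseteq\PP$; but the first part places $G_{r+1}$ in this $\CC(r+1,s)$, a contradiction. Hence $\col(\PP)\le r$. The only step requiring any real argument is the $m$-colorability of $G_m$, and the main nuance there is to split on the parity of $m$; once $\chi'(K_m)$ is invoked, both sub-cases close in one line.
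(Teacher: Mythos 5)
Your argument is correct. The paper itself gives no proof of this lemma --- it is quoted from Pach and T\'oth \cite{PT06} with a \qed{} --- so there is nothing internal to compare against; your write-up is essentially the standard argument for \cite[Lemma 3.1]{PT06}: partition by minimum element to get the clique parts, observe that the leftover vertices induce a smaller copy $G_{r+1-s}$, and handle the all-independent-sets case by identifying the pair-vertices with $L(K_m)$ and transporting a proper edge-coloring of $K_m$, with the parity split on $\chi'(K_m)$ and the ``missing colour at vertex $i$'' trick for odd $m$ both handled correctly. The only step you gloss over is in the final deduction: concluding from $\col(\PP)\ge r+1$ that $\CC(r+1,s)\subseteq\PP$ for some $s\le r+1$ uses the (easy, but worth one line) monotonicity $\CC(r+1,s)\subseteq\CC(t,s')$ for $t\ge r+1$ and a suitable $s'$, obtained by padding with empty parts.
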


\begin{lemma}\label{le:osinclusion}
Every graph in the class~$\mathcal{C}(3, 3)$ is an outer-string graph.
Moreover, every graph that admits an outerplanar clique covering is an
outer-string graph. 
\end{lemma}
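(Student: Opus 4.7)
For the first statement, the plan is to invoke the characterization $\OSG = 1\NG$ from \refT{th:classes}(i). Given $G \in \CC(3,3)$ with partition $V(G) = V_1 \sqcup V_2 \sqcup V_3$ into cliques, take any disjoint clique $H_1$ and any graph $F$ on $V(G) \cup V(H_1)$ with $E(F) \supseteq E(G \cup H_1)$. The four sets $V_1, V_2, V_3, V(H_1)$ are pairwise disjoint cliques in $F$, so $F \in \CC(4,4) \subseteq \SG$ by \cite[Corollary~2.7]{KGK} (already cited in the proof of \refT{th:stringlimits}). Hence $G \in 1\NG = \OSG$.

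For the ``moreover'' clause, I would construct an outer-string representation directly from an outerplanar clique covering $\CC = \{V_1, \ldots, V_k\}$; after removing duplicates, one may assume the $V_i$ are pairwise distinct. First fix an outerplanar embedding of $G_\CC$ inside a disk $D$, with each vertex $i$ placed at $p_i \in \partial D$ and each edge $ij$ drawn as a non-crossing Jordan arc in $D$. Around each $p_i$ choose a small pairwise disjoint hub disk $D_i \subset D$ containing a marked point $m_i$, and thicken each drawn edge $ij$ to a thin corridor $S_{ij}$ from $D_i$ to $D_j$; the non-crossing layout lets the corridors be chosen pairwise disjoint outside the hubs. For $u \in V(G)$ set $I(u) = \{i : u \in V_i\}$; a short argument shows $I(u)$ is a clique of $G_\CC$ (if $u \in V_i \cap V_j$ with $V_i \neq V_j$, any $w \in V_i \triangle V_j$ gives an edge of $G$ between $V_i$ and $V_j$), whence by outerplanarity $|I(u)| \le 3$. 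I would then define $A_u$ to start at a distinct point of $\partial D$ near $p_{i_0}$ for some $i_0 \in I(u)$, enter $D_{i_0}$ through $m_{i_0}$, and then visit every other $m_i$ with $i \in I(u)$ by traversing corridors indexed by edges of the clique $I(u)$.

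The verification should be straightforward: if $i \in I(u) \cap I(v)$, both curves pass through $m_i$ and meet, with $uv \in E(G)$; and if $I(u) \cap I(v) = \emptyset$, then $A_u$ is confined to the hubs and corridors indexed by $I(u)$ (plus its initial arc), while $A_v$ lies in the analogous union for $I(v)$, so any shared hub or corridor would force a common index, and the two regions are thus disjoint. The main obstacle will be the geometric bookkeeping inside each hub and corridor: many curves must be routed continuously through a small region in such a way that all extra crossings occur only between curves whose $I(\cdot)$ sets already overlap. Outerplanarity is used decisively here, both to give the disjoint-corridor layout and to cap $|I(u)| \le 3$, so that each $A_u$ is a walk in $G_\CC$ of length at most two edges.
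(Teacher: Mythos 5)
Your argument for the first statement is correct but takes a genuinely different route: you deduce $\CC(3,3)\subseteq 1\NG$ from $\CC(4,4)\subseteq\SG$ and then invoke the characterization $1\NG=\OSG$ of \refT{th:classes}(i). This is legitimate, but it leans on the nontrivial direction $1\NG\subseteq\OSG$ of the cited theorem, whereas the paper gives a short self-contained outer-string construction (which is then reused in the proof of \refT{th:outerstringlimits}). Note also that this route does not extend to the ``moreover'' clause: for a covering by $k\ge4$ cliques, the augmented graph $F\supseteq G\cup H_1$ may acquire arbitrary new edges between the cliques, so its reduced graph need not stay outerplanar-plus-apex, and $F$ need not land in a class already known to lie in $\SG$.

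The construction you propose for the ``moreover'' clause has a fatal gap: it realizes only the intra-clique adjacencies. A clique covering here covers the \emph{vertex set} by cliques, and $G$ will in general have many edges $uv$ with $u\in V_i\setminus V_j$ and $v\in V_j\setminus V_i$ --- these cross-clique edges are exactly what the edge sets $E(V_i,V_j)$ of $G_{\covering}$ record (see the proof of \refT{th:stringlimits}, where the bipartite graphs between the cliques are dense random graphs). For such a pair one has $I(u)\cap I(v)=\emptyset$ while $uv\in E(G)$, yet your verification step asserts $A_u\cap A_v=\emptyset$ in precisely this case; so the family $\{A_u\}$ you build is an intersection representation of $\bigcup_i K_{V_i}$ rather than of $G$. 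The paper's construction is organized around exactly this point: for each cross-clique edge $xy$ of $G$ with $x\in V_i$, $y\in V_j$, it draws one arc $\tilde A_{xy}$ from $v_i$ to $v_j$ (possible without crossings because $G_{\covering}$ is outerplanar and the $v_i$ lie on $\partial D$), splits it at an interior point $a_{xy}$, and assigns one half to $A_x$ and the other to $A_y$, so that $A_x\cap A_y\ni a_{xy}$ precisely when $xy\in E(G)$, while all curves of vertices in a common clique $V_i$ meet at the boundary point $v_i$. Your corridor walk through the hubs $m_i$ would have to be replaced by something of this kind; as written, the ``geometric bookkeeping'' you defer is not the obstacle --- the adjacency criterion itself is wrong.
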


\begin{proof}
As stated above, the corresponding results for string graphs were proved in
\cite[Theorem 2.3]{KGK} and in
\cite[Example 2.4]{HJS}.  We modify the construction given in the latter.

Let $G$ be a graph whose vertex set can be covered by three cliques
$V_1$,~$V_2$, and~$V_3$.  Place distinct points $v_1$,~$v_2$, and~$v_3$ on
the boundary of a disk $D$.  For each $x \in V_i$ and~$y \in V_j$ with $i \neq
j$, add a curve~$\tilde{A}_{xy}\subset D$ 
between $v_i$ and $v_j$.  We may place the 
curves in such a way that different curves do not meet apart from their
endpoints.  For each $x \in V_i$ and~$y \in V_j$ with $i \neq j$, choose a
point~$a_{xy} \in \tilde{A}_{xy}$ that is different from both $v_i$ and
$v_j$.  Let $\tilde{A}_{xy}^*$ denote the portion of~$\tilde{A}_{xy}$
between $v_i$ and $a_{xy}$ and let $\tilde{A}_{yx}^*$ denote the portion
of~$\tilde{A}_{xy}$ between $v_j$ and $a_{xy}$.   
Let, for $x\in V_i$,
\[
	A_x = \Biggl(\bigcup_{\substack{xy \in E(G),\\y \notin V_i}} \tilde{A}_{xy}^* \Biggr).
\]
It is easy to see that the collection of curves~$\{A_v : v \in V(G)\}$ gives
an outer-string representation of~$G$.
(We can regard each $A_x$ as a curve starting and ending at $v_i$,
traversing each $\tilde{A}_{xy}^*$ in both directions, with these parts in
arbitrary order.) 

In general, if $G$ admits an outerplanar clique covering with $\covering = \{V_1, \dots, V_k\}$,
%is a partition of~$V(G)$ into cliques such that the resulting graph~$H$ is outerplanar,
we may place $k$ distinct points on the boundary of the disk and repeat the construction described above to define an outer-string representation of~$G$.
\end{proof}

\begin{lemma}\label{le:outerstringcoloring}
We have
\begin{equation*}%\label{eq:outerstringcoloringnumber}
	\col(\OSG) = 3
\end{equation*}
and
\begin{equation}\label{eq:outerstringspeed}
	\abs{\OSG_n} = 2^{\left(\frac{2}{3} + o(1)\right)\binom{n}{2}}.
\end{equation}
\end{lemma}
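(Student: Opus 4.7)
The plan is to obtain $\col(\OSG)=3$ by combining the lower bound from \refL{le:osinclusion} with the upper bound coming from the forbidden subgraph $G_4$, and then to read off \eqref{eq:outerstringspeed} directly from \refT{th:maxlimits}.

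First, for the lower bound $\col(\OSG)\ge 3$: by \refL{le:osinclusion} we have $\CC(3,3)\subseteq\OSG$. By the definition of $\col$, this immediately gives $\col(\OSG)\ge 3$.

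Next, for the upper bound $\col(\OSG)\le 3$: by \refL{le:outerstringforb}, $G_4$ is not an outer-string graph, and since $\OSG$ is hereditary we have $\OSG\subseteq\Forb^*(\set{G_4})$. Applying \refL{le:Ginclusion} with $r=3$ then yields $\col(\OSG)\le 3$. Combining both directions gives $\col(\OSG)=3$.

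Finally, the speed estimate \eqref{eq:outerstringspeed} follows immediately from the general formula \eqref{eq:speed} in \refT{th:maxlimits} applied to the hereditary class $\OSG$ with $r=\col(\OSG)=3$, giving $\abs{\OSG_n}=2^{(1-1/3+o(1))\binom{n}{2}}=2^{(2/3+o(1))\binom{n}{2}}$. There is no real obstacle here — every ingredient has already been established earlier in the paper, and the proof is essentially a direct assembly of \refL{le:osinclusion}, \refL{le:outerstringforb}, \refL{le:Ginclusion}, and \refT{th:maxlimits}.
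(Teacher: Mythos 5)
Your proof is correct and is exactly the argument the paper gives: the paper's proof states that $\col(\OSG)=3$ is immediate from Lemmas~\ref{le:outerstringforb}, \ref{le:Ginclusion}, and~\ref{le:osinclusion}, and that \eqref{eq:outerstringspeed} then follows from \eqref{eq:speed}. You have simply spelled out the same assembly of lemmas in more detail, correctly in every step.
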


\begin{proof}
The fact that $\col(\OSG) = 3$ is immediate from Lemmas \ref{le:outerstringforb},~\ref{le:Ginclusion}, and~\ref{le:osinclusion}.  This and \eqref{eq:speed} imply~\eqref{eq:outerstringspeed}.
\end{proof}

\begin{remark}\label{re:proper}
Because $G_4 \in \CC(4,4) \subseteq \SG$, $G_4$ is an example of a string
graph that is not an outer-string graph.  There are many others:
\eqref{eq:stringspeed} and \eqref{eq:outerstringspeed}
imply that almost every string graph is not an outer-string graph. 
\end{remark}

%\begin{corollary}\label{cor:osmaxlimitsinclusion}
%\[
%	\maxlimits{\OSG} \supseteq \{ W^3_a \, : \, a \in [0, 1] \}.
%\]
%\end{corollary}
%
%\begin{proof}
%Let $a \in [0,1]$.  As in the proof of \refT{th:stringlimits}, one can define a sequence of graphs~$(G_n)_{n=1}^{\infty}$ such that for each~$n$, $G_n$ is an outer-string graph that admits a clique covering~$\covering$ such that $G_{\covering} = K_4 - e$; and such that
%%for each~$n$, let $G_n$ be a graph on $n$ vertices consisting of cliques $V_1$,~$V_2$,~$V_3$, and~$V_4$, where $\abs{V_1} = \abs{V_2} = n/3$, $\abs{V_3} = an/3$, and $\abs{V_4} = (1-a)n/3$, such that there are no edges between $V_3$ and $V_4$ and such that all other edges are present independently with probability~$1/2$.  Observe that $G_n$ admits an outerplanar clique covering with $H = K_4 - e$.  Then
%$G_n \to W^3_a$ as $n \to \infty$.
%It then follows from Lemma~\ref{le:osinclusion} that $W^3_a \in \limitsx{\OSG} \cap R_3 = \maxlimits{OSG}$, as claimed.
%\end{proof}

\begin{proof}[Proof of Theorem \ref{th:outerstringlimits}.]
One inclusion is immediate from Lemma~\ref{le:outerstringforb}, Lemma~\ref{le:outerstringcoloring}, and Theorem~\ref{th:general}.  For the other, let $a \in [0,1]$.  As in the proof of \refT{th:stringlimits}, one can define a sequence of graphs~$G_n=G(n,W^3_a)$ such that for each~$n$, with probability~one, $G_n$ admits a clique covering~$\covering$ such that $G_{\covering} = K_4 - e$, and such that $G_n \to W^3_a$ almost surely as $n \to \infty$.
Because $K_4 - e$ is outerplanar, it follows from Lemma~\ref{le:osinclusion} that each $G_n$ is an outer-string graph. Theorem~\ref{th:maxlimits} then implies that $W^3_a \in \limitsx{\OSG} \cap R_3 = \maxlimits{OSG}$, as claimed.
\end{proof}

%\newpage

\section{Proof of Theorem~\ref{th:incomplimits}}\label{se:incompproof}

\begin{lemma}\label{le:incompforb}
The graphs $G_3$ and~$B_3$ are not incomparability graphs.
\end{lemma}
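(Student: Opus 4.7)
The plan is to apply the characterization $\incomp = 2\NG$ from Theorem~\ref{th:classes} and combine it with the already-proved fact (Lemma~\ref{le:stringforb}) that $G_5$ is not a string graph. If either $G_3$ or $B_3$ were an incomparability graph, I will show that the defining property of $2\NG$ forces $G_5$ itself to be a string graph, which is a contradiction.

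For $G_3$, the key observation is that $V(G_5)$---the $15$ subsets of $[5]$ of size one or two---partitions naturally as $V(G_3) \cup V_4 \cup V_5$, where
\[
V_4 = \{v_{\{4\}}, v_{\{1,4\}}, v_{\{2,4\}}, v_{\{3,4\}}\}, \qquad
V_5 = \{v_{\{5\}}, v_{\{1,5\}}, v_{\{2,5\}}, v_{\{3,5\}}, v_{\{4,5\}}\}.
\]
Since every element of $V_i$ is a subset containing $i$, any two elements of $V_i$ intersect, so $V_4$ and $V_5$ induce cliques in $G_5$. Hence, if we assume $G_3 \in 2\NG$ and take the cliques on $V_4$ and $V_5$ as $H_1, H_2$ in the definition of $2\NG$, then $G_5$---which is a graph on $V(G_3) \cup V(H_1) \cup V(H_2)$ whose edge set contains $E(G_3) \cup E(H_1) \cup E(H_2)$---must be a string graph, contradicting Lemma~\ref{le:stringforb}.

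The argument for $B_3$ is essentially identical: $B_3$ has the same vertex set as $G_3$ and $E(B_3) \subseteq E(G_3)$, so the same graph $G_5$ still satisfies $E(G_5) \supseteq E(B_3) \cup E(H_1) \cup E(H_2)$ for the same choice of $H_1, H_2$. Thus $B_3 \in 2\NG$ would again force $G_5 \in \SG$. The only thing to verify carefully is the decomposition $V(G_5) = V(G_3) \cup V_4 \cup V_5$ together with the cliqueness of $V_4$ and $V_5$, but the set-theoretic labelling makes both facts immediate, so I do not expect any genuine obstacle here, and the whole proof should take just a few lines.
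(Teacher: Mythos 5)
Your proof is correct, but it takes a genuinely different route from the paper. The paper's primary argument identifies the two graphs concretely ($B_3 = C_6$, and $G_3$ is the complement of a triangle with a pendant edge at each vertex) and invokes Gallai's classification of comparability graphs; its alternative argument is geometric, extending a hypothetical representation by strings between two parallel segments to an outer-string representation of $G_4$ or $B_4$ and then appealing to Lemma~\ref{le:outerstringforb}. You instead use the characterization $\incomp = 2\NG$ from Theorem~\ref{th:classes}(ii) as a black box: the decomposition $V(G_5) = V(G_3) \cup V_4 \cup V_5$ is correct ($6+4+5 = 15$ vertices), $V_4$ and $V_5$ do induce cliques since all their members contain the element $4$ (resp.\ $5$), $G_5[V(G_3)] = G_3$, and the definition of $2\NG$ only requires $E(F) \supseteq E(G \cup H_1 \cup H_2)$, so the extra edges of $G_5$ between the parts and the fact that $E(B_3) \subsetneq E(G_5[V(B_3)])$ are both harmless. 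What your approach buys is a uniform, purely combinatorial reduction to Lemma~\ref{le:stringforb} that handles $G_3$ and $B_3$ simultaneously with no geometry and no appeal to Gallai's list; what it costs is reliance on the nontrivial inclusion $\incomp \subseteq 2\NG$ from \cite{KGK}, which is essentially the content that the paper's alternative proof establishes by hand (in the special case needed) via the two-step extension through outer-string graphs.
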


\begin{proof}
Observe that $B_3 = C_6$ and that $G_3$ is 
the complement of a triangle with a pendant edge attached to each vertex.  It is a well-known result of Gallai~\cite{Gallai} (see also~\cite{Trotter}) that neither of these graphs is an incomparability graph.

Alternatively, suppose that $G_3$ or $B_3$ had an intersection representation consisting of strings between two parallel segments.  By arguing as in the proof of Lemma~\ref{le:outerstringforb}, we could extend this representation to an outer-string representation of $G_4$ or $B_4$, a contradiction.
\end{proof}

\begin{lemma}\label{le:incompcol}
We have
\begin{equation*}%\label{eq:incompcol}
	\col(\incomp) = 2
\end{equation*}
%(in particular, $\CC(2,2) \subseteq \incomp$)
and
\begin{equation}\label{eq:incompspeed}
	\abs{\incomp_n} = 2^{\left(\frac{1}{2} + o(1)\right)\binom{n}{2}}.
\end{equation}
\end{lemma}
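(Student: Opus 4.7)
The plan is to mirror the proof of Lemma~\ref{le:outerstringcoloring}: first establish $\col(\incomp) = 2$ by sandwiching the coloring number between matching upper and lower bounds, and then invoke \eqref{eq:speed} of Theorem~\ref{th:maxlimits} to read off the speed estimate~\eqref{eq:incompspeed}.

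For the upper bound, I would combine Lemma~\ref{le:incompforb} with Lemma~\ref{le:Ginclusion}. Since $\incomp$ is hereditary and $G_3 \notin \incomp$ by Lemma~\ref{le:incompforb}, we have $\incomp \subseteq \Forb^*(\set{G_3})$, so Lemma~\ref{le:Ginclusion} applied with $r = 2$ immediately gives $\col(\incomp) \le 2$.

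For the matching lower bound, it suffices to exhibit some $s \in \set{0,1,2}$ with $\CC(2, s) \subseteq \incomp$. I would take $s = 2$ and check that every graph that is the disjoint union of at most two cliques is an incomparability graph: given such a graph $G$ with clique parts $A$ and $B$, put the partial order on $V(G)$ in which $a < b$ whenever $a \in A$ and $b \in B$, with no other comparabilities. Transitivity is then vacuous, and the associated incomparability graph is precisely $G$. (Alternatively, one could invoke Theorem~\ref{th:classes}, since $\CC(2,2) = \mathcal{TCG} = 3\NG \subseteq 2\NG = \incomp$.)

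Once $\col(\incomp) = 2$ is in hand, \eqref{eq:incompspeed} is simply the case $r = 2$ of \eqref{eq:speed}. There is no substantive obstacle here; the only non-mechanical step is the one-line partial-order construction for the lower bound.
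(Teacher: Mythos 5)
Your overall architecture is the same as the paper's: the upper bound $\col(\incomp)\le 2$ via Lemma~\ref{le:incompforb} and Lemma~\ref{le:Ginclusion}, the lower bound via $\CC(2,2)\subseteq\incomp$, and then \eqref{eq:incompspeed} from \eqref{eq:speed}. The upper bound and the final step are fine. But the lower bound as you wrote it has a genuine gap, caused by a misreading of the definition of $\CC(2,2)$. By the definition in Section~\ref{se:prelims}, $\CC(2,2)$ is the class of graphs whose vertex set can be \emph{partitioned} into two cliques, with no restriction on the edges \emph{between} the two cliques; this is the class of co-bipartite graphs, not the class $\mathcal{TCG}$ of disjoint unions of at most two cliques. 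For instance $P_3\in\CC(2,2)\setminus\mathcal{TCG}$. Consequently your claimed identity $\CC(2,2)=\mathcal{TCG}=3\NG$ is false, and your partial order (all of $A$ below all of $B$, nothing else) only realizes graphs with \emph{no} edges between $A$ and $B$, i.e.\ it only proves $\mathcal{TCG}\subseteq\incomp$. That inclusion does not yield $\col(\incomp)\ge 2$, because $\mathcal{TCG}$ is not of the form $\CC(2,s)$, and the coloring number requires containing some full class $\CC(2,s)$.

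The repair is short and is essentially what the paper does. Given $G\in\CC(2,2)$ with clique parts $A$ and $B$, declare $a<b$ for $a\in A$, $b\in B$ precisely when $ab\notin E(G)$, and impose no other relations. Transitivity is vacuous (every relation goes from $A$ to $B$, so there is no chain $x<y<z$), and the incomparability graph of this order is exactly $G$: pairs inside $A$ or inside $B$ are incomparable (matching the cliques), and a cross pair $ab$ is incomparable if and only if $ab\in E(G)$. Equivalently, in the paper's phrasing: every bipartite graph is a comparability graph (orient all edges from one color class to the other), and taking complements this is exactly the statement $\CC(2,2)\subseteq\incomp$. Note also that you could not dodge the issue by switching to $s=0$: $\CC(2,0)$ is the class of bipartite graphs, and $B_3=C_6$ is bipartite but not an incomparability graph by Lemma~\ref{le:incompforb}, so $\CC(2,0)\not\subseteq\incomp$.
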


\begin{remark}\label{re:incompspeed}
The asymptotic speed of the class of incomparability graphs (equation~\eqref{eq:incompspeed}) was first determined by Kleitman and Rothschild~\cite{KR70,KR75}.
\end{remark}

\begin{proof}[Proof of Lemma \ref{le:incompcol}.]
It is easy to see that every bipartite graph is a comparability graph, which is equivalent to the statement that $\CC(2, 2) \subseteq \incomp$.  It follows that $\col(\incomp) \geq 2$.  Lemmas \ref{le:incompforb} and~\ref{le:Ginclusion} then imply that $\col(\incomp) = 2$.  Finally, \eqref{eq:incompspeed} follows from~\eqref{eq:speed}.
\end{proof}

In the proof of Theorem~\ref{th:incomplimits}, instead of directly proving our results about incomparability graphs, we will find it convenient to prove the corresponding results for comparability graphs.  We let $\comp$ denote the class of comparability graphs.

\begin{proof}[Proof of Theorem \ref{th:incomplimits}.]
One inclusion is immediate from Lemma~\ref{le:incompforb}, Lemma~\ref{le:incompcol}, and Theorem~\ref{th:general}.  In order to show that
\[
\maxlimits{\incomp} \supseteq \bigl\{ W^2_a \, : \, a \in [0, 1] \bigr\},
\]
it is enough to prove that $1 - W^2_a \in \maxlimits{\comp}$ for every $a \in \oi$.

For $a \in \set{0, 1}$, the result follows from the fact that every
bipartite graph is a comparability graph.  Hence, we may suppose that $a \in
(0, 1)$. 
%For each $n$, let $X \cup Y \cup Z$ be a partition of~$[n]$ with $\abs{X} =
%an/2$, $\abs{Y} = n/2$, and $\abs{Z} = (1 - a)n/2$ (rounding to integers as
%necessary). 
%For each $n$, we define a random partial order on~$[n]$ such that $X$, $Y$,
%and $Z$ are incomparable sets; such that for each $x \in X$, elements
%of~$Y$ are covered by $x$ independently with probability~$1/2$ and are
%incomparable with $x$ otherwise; and such that for each $z \in Z$, elements
%of~$Y$ cover $z$ independently with probability~$1/2$ and are incomparable
%with $z$ otherwise.  Letting $G_n$ denote the corresponding comparability
%graph, it is easy to see that with high probability, every $x \in X$ is
%adjacent to every $z \in Z$. 
For each~$n$, we let $G_n = G(n, 1-W^2_a)$. 
Let $A_n = \set{i : X_i \in [0, a/2)}$, $C_n = \set{i : X_i \in [a/2,1/2)}$,
and $B_n = \set{i : X_i \in [1/2, 1)}$. 
Observe that with probability~one, each of $A_n$,~$B_n$, and~$C_n$ is an
incomparable set, and $A_n\cup C_n$ is a complete bipartite graph.  
Hence, if we orient edges from $A_n$ to~$B_n$, from $A_n$ to $C_n$ 
and from $B_n$ to $C_n$,
%the definition of~$G(n, 1- W^2_a)$ means that
then if $ab$ and
$bc$ are directed eges, then $ac$ is almost surely a directed edge.
Thus, each $G_n$ is almost surely a comparability graph. 
Because $G_n \to 1 - W^2_a$ almost surely and $\col(\comp) = \col(\incomp) =
2$, it follows from Theorem~\ref{th:maxlimits} that $1 - W^2_a \in
\limitsx{\comp} \cap R_2 = \maxlimits{\comp}$, as claimed. 
\end{proof}

\section{Random string graphs}\label{se:random}

Let $\QQ$ be any graph class.  The next result says that if the maximum
entropy of an element of~$\limitsx{\QQ}$ determines the speed of~$\QQ$, then
the maximum-entropy elements of~$\limitsx{\QQ}$ also determine the 
asymptotic structure
of a typical element of~$\QQ$.  This was proved in~\cite[Theorem 1.6]{HJS}
in the special case when $\maxlimits{\QQ}$ consists of a single element.
Since the proof of the general case is nearly identical to the argument
given in~\cite{HJS}, we omit it. 

Given $\delta > 0$, let $A_{\delta}(\QQ) = \{\Gamma \in \limitsx{\graphs} :
\deltacut(\Gamma, \maxlimits{\QQ}) < \delta\}$, 
where $\deltacut$ is the standard metric on the space of graph limits, see
\eg{}~\cite{LovaszBook}.

\begin{theorem}\label{th:convinprob}
Let $\QQ$ be a class of graphs and suppose that
\begin{equation}\label{eq:speedlimit}
	\lim_{n \to \infty} \dfrac{\log_2\abs{\QQ_n}}{\binom{n}{2}} = \max_{\Gamma \in \limitsx{\QQ}} \Ent(\Gamma).
\end{equation}
For each~$n$, let $G_n$ be a uniformly random unlabeled element of~$\QQ_n$.  For any $\delta > 0$,
\[
	\lim_{n \to \infty} \Pr\bigl(G_n \in A_{\delta}(\QQ)\bigr) = 1.
\]
Moreover, the same conclusion holds if we let each $G_n$ be a uniformly
random labeled element of~$\QQ_n$. \qed
\end{theorem}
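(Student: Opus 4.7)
The plan is to follow the proof of \cite[Theorem 1.6]{HJS} essentially verbatim, replacing cut-distance from a single maximum-entropy graph limit by cut-distance from the compact set $\maxlimits{\QQ}$. Write $m := \max_{\Gamma \in \limitsx{\QQ}} \Ent(\Gamma)$, so that by \eqref{eq:speedlimit} we have $\abs{\QQ^L_n} = 2^{(m + o(1))\binom{n}{2}}$, and likewise $\abs{\QQ_n} = 2^{(m + o(1))\binom{n}{2}}$, since the two speeds differ by a factor of at most $n! = 2^{o(n^2)}$. Fix $\delta > 0$ and set $K := \bigset{\Gamma \in \limitsx{\QQ} : \deltacut(\Gamma, \maxlimits{\QQ}) \geq \delta/2}$. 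The space of graph limits is compact in $\deltacut$ and $K$ is closed, hence $K$ is compact. By upper semi-continuity of $\Ent$ on a compact set, $m' := \sup_{\Gamma \in K} \Ent(\Gamma)$ is attained, and $m' < m$ because $K \cap \maxlimits{\QQ} = \emptyset$.

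Next, I would invoke the standard counting estimate: for every $\alpha > 0$ and every graph limit $\Gamma$ there exist $\eta = \eta(\Gamma, \alpha) > 0$ and $N = N(\Gamma, \alpha)$ such that for all $n \geq N$, the number of labeled graphs $G$ on $[n]$ whose associated graphon $W_G$ satisfies $\deltacut(W_G, \Gamma) < 2\eta$ is at most $2^{(\Ent(\Gamma) + \alpha)\binom{n}{2}}$. Choose $\alpha$ with $m' + \alpha < m$, cover the compact set $K$ by finitely many cut-balls $B(\Gamma_1, \eta_1), \dots, B(\Gamma_M, \eta_M)$ with centers $\Gamma_i \in K$, and let $\eta := \min_i \eta_i$.

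A short compactness argument gives $\sup_{G \in \QQ_n} \deltacut(W_G, \limitsx{\QQ}) \to 0$ as $n \to \infty$: otherwise some subsequence $W_{G_{n_k}}$ would converge in $\deltacut$ to a limit $\Gamma \in \limitsx{\QQ}$ while remaining bounded away from $\limitsx{\QQ}$, which is impossible. By the triangle inequality, therefore, for all sufficiently large $n$ every $G \in \QQ^L_n$ with $W_G \notin A_\delta(\QQ)$ satisfies $\deltacut(W_G, \Gamma_i) < 2\eta_i$ for some $i \leq M$. The number of such labeled graphs is then at most $M \cdot 2^{(m' + \alpha + o(1))\binom{n}{2}} = o(\abs{\QQ^L_n})$, whence $\Pr(G_n \notin A_\delta(\QQ)) \to 0$ in the labeled case. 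Since each unlabeled graph in $\QQ_n$ has at most $n!$ labelings in $\QQ^L_n$, the unlabeled bad count is dominated by the labeled bad count, and the comparison $\abs{\QQ_n} \geq \abs{\QQ^L_n}/n! = 2^{(m + o(1))\binom{n}{2}}$ yields the unlabeled conclusion as well.

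The main obstacle is the counting estimate itself --- a uniform upper bound of $2^{(\Ent(\Gamma) + \alpha)\binom{n}{2}}$ on the number of labeled graphs whose graphon lies within a small cut-ball around $\Gamma$. This is precisely the ingredient already used in \cite{HJS} for the case when $\maxlimits{\QQ}$ has a single element, so the only genuinely new element here is the finite sub-cover argument, which depends only on compactness of $\maxlimits{\QQ}$ (hence of $K$) in the cut metric.
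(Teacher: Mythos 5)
Your proposal is correct and matches the paper's intent: the paper omits the proof precisely because it is "nearly identical" to \cite[Theorem 1.6]{HJS}, and your argument is exactly that adaptation, with the compactness of $K$ and a finite subcover of counting balls replacing the single ball around the unique maximum-entropy limit, plus the standard labeled/unlabeled comparison via the $n!$ factor. All the ingredients you invoke (the HJS counting estimate, upper semicontinuity of entropy, compactness of the graph limit space, and closedness of $\limitsx{\QQ}$) are available, so the proof is sound.
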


\begin{remark}\label{re:heredspeed}
By Theorem~\ref{th:maxlimits}, equation~\eqref{eq:speedlimit} holds whenever
$\QQ$ is a hereditary property. 
\end{remark}

It is immediate from Theorem~\ref{th:convinprob} and Theorems
\ref{th:stringlimits} and~\ref{th:outerstringlimits}, respectively, that if
$G_n$ is a uniformly random (unlabeled or labeled)
string graph, then $G_n$ converges in probability as $n\to\infty$ to the
set~$\{W^4_a : a \in [0, 1]\}$, and similarly that a sequence of uniformly
random outer-string graphs converges in probability to the set~$\{W^3_a : a \in [0, 1]\}$.
In principle, it is possible that the sequence $G_n$ converges in
distribution to a limit that is a non-degenerate random graph limit in
$\set{W^4_a}$, or that the distributions oscillate and do not converge at
all, but it seems very probable that there exists some $a\in[0,1/2]$ such
that
$G_n \pto W^4_a$.
However, assuming that this really holds, what is the limiting $a$?
We believe that $a=1/2$, and thus that the following stronger results should also
hold. 

\begin{conjecture}\label{conj:stringconv}
If, for each~$n$, $G_n$ is a uniformly random unlabeled element of~$\SG_n$, then $G_n \pto W^4_{1/2}$.  Moreover, the same conclusion holds if each $G_n$ is a uniformly random labeled element of~$\SG^L_n$.
\end{conjecture}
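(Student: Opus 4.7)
My plan is to sharpen the enumeration underlying \refT{th:convinprob} from the $o(n^2)$ scale in the exponent down to the $o(n)$ scale; this finer precision is exactly what is required to distinguish different values of $a\in\oi$ through the multinomial coefficient associated with the block sizes of $W^4_a$. Since $W^4_a\cong W^4_{1-a}$, it is enough to show that for every $\epsilon>0$ the probability that $\deltacut(G_n,W^4_a)<\delta$ holds for some $a\in[0,1/2-\epsilon]$ tends to zero. Set $N(a,\delta)=\abs{\{G\in\SG^L_n:\deltacut(G,W^4_a)<\delta\}}$; the strategy is to bound $N(a,\delta)$ tightly from above and $N(1/2,\delta)$ tightly from below.

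For the lower bound, I would repeat the planar-clique-covering construction from the proof of \refT{th:stringlimits}, but with ordered partitions of $[n]$ into blocks of sizes $s_1=\lfloor an/4\rfloor$, $s_2=\lfloor n/4\rfloor-s_1$, and $s_3=s_4=s_5=\lfloor n/4\rfloor$, making each block a clique, the pair $(V_1,V_2)$ edge-free, and all edges between the other nine pairs of blocks arbitrary. Each such graph has clique-intersection graph a subgraph of $K_5-e$ and is therefore a string graph. A direct computation using $s_1+s_2=\lfloor n/4\rfloor$ gives $\sum_{\{i,j\}\neq\{1,2\}}s_is_j=3n^2/8-O(n)$, independent of $a$, and one checks that each graph produced has cut distance $O(n^{-1/2})$ from $W^4_a$. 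Hence
$$N(a,\delta)\;\geq\;\binom{n}{n/4,n/4,n/4,n/4}\cdot\binom{n/4}{s_1}\cdot 2^{3n^2/8-O(n)}.$$

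For the upper bound, I would prove the matching estimate $N(a,\delta)\leq\binom{n}{n/4,n/4,n/4,n/4}\cdot\binom{n/4}{s_1}\cdot 2^{3n^2/8+o(n)}$ via a refined stability theorem: every labeled string graph within cut distance $\delta$ of $W^4_a$ agrees, up to a choice of ordered five-block partition with block sizes within $o(n)$ of $(s_1,\ldots,s_5)$ and an additional $2^{o(n)}$ discrete freedom, with one of the constructions above. A plausible implementation combines Szemer\'edi regularity (to locate the blocks), the planar-clique-covering characterisation used in the proof of \refT{th:stringlimits} (to bound the residual freedom), and the hypergraph container method (to organise the enumeration). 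Combining the two bounds via Stirling, $\binom{n/4}{s_1}/\binom{n/4}{\lfloor n/8\rfloor}=2^{-(n/4)(1-h(a))+O(\log n)}$, and since $1-h(a)\geq c(\epsilon)>0$ on $[0,1/2-\epsilon]$, summing over a finite $\delta$-net of values of $a$ gives $\sum_a N(a,\delta)=2^{-\Omega(n)}\cdot N(1/2,\delta)$, which yields $G_n\pto W^4_{1/2}$ in the labeled case. The unlabeled statement follows because $n!=2^{o(n^2)}$.

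The main obstacle is unquestionably the upper bound. The Alekseev--Bollob\'as--Thomason framework underlying \refT{th:maxlimits} only controls the speed up to $o(n^2)$ error, which dwarfs the $\Theta(n)$ multinomial gap we wish to exploit. Adapting the Kleitman--Rothschild-style enumeration \cite{KR70,KR75} from posets to $\SG$ appears genuinely hard because, unlike the class of comparability graphs, string graphs lack an order-theoretic description that makes the structure of near-extremal examples combinatorially explicit; new ideas seem to be needed here.
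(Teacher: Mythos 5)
This statement is a \emph{conjecture} in the paper, not a theorem: the authors prove only the weaker set-valued convergence (via Theorem~\ref{th:convinprob}, $G_n$ converges in probability to the \emph{set} $\{W^4_a : a\in\oi\}$) and offer the multinomial heuristic ``$a=1/2$ gives the largest number of partitions'' as motivation, explicitly leaving the pinpointing of $a$ open. Your proposal correctly reproduces that heuristic and makes the lower bound $N(1/2,\delta)/N(a,\delta)\geq 2^{(n/4)(1-h(a))-O(\log n)}$ precise (modulo a small fix: not every graph in your construction with arbitrary edges between the nine ``free'' block pairs is within cut distance $\delta$ of $W^4_a$ --- only a $1-o(1)$ fraction is, so you should restrict to those), but it does not constitute a proof, and you say so yourself.

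The genuine gap is the upper bound $N(a,\delta)\leq \binom{n}{n/4,\dots,n/4}\binom{n/4}{s_1}2^{3n^2/8+o(n)}$, and it is not a routine technical step: it requires controlling the number of string graphs near $W^4_a$ with multiplicative error $2^{o(n)}$, whereas every tool invoked in the paper (Theorem~\ref{th:maxlimits}, the Alekseev--Bollob\'as--Thomason speed theorem, and the entropy argument behind Theorem~\ref{th:convinprob}) controls counts only up to $2^{o(n^2)}$. A factor of $2^{o(n^2)}$ completely swamps the $2^{\Theta(n)}$ gap you are trying to exploit, so no combination of regularity, the planar-clique-covering characterisation, and containers as currently available for $\SG$ delivers the needed precision; one would need a Kleitman--Rothschild-type structural enumeration for string graphs, which does not exist (string graphs lack the order-theoretic rigidity that makes the poset/comparability-graph case tractable). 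In addition, your stability step must rule out string graphs close to $W^4_a$ in cut metric that admit no five-clique decomposition of the prescribed shape at all; nothing in the paper or in your sketch supplies this. So the proposal is an informative refinement of the authors' heuristic, but the conjecture remains open.
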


\begin{conjecture}\label{conj:outerstringconv}
If, for each~$n$, $G_n$ is a uniformly random unlabeled element of~$\OSG_n$, then $G_n \pto W^3_{1/2}$.  Moreover, the same conclusion holds if each $G_n$ is a uniformly random labeled element of~$\OSG^L_n$.
\end{conjecture}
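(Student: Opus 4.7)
The task is to upgrade the convergence given by Theorem~\ref{th:convinprob} (which, combined with Theorem~\ref{th:outerstringlimits}, yields $\deltacut(G_n, \{W^3_a : a \in [0,1]\}) \pto 0$) to concentration at the single point $a = 1/2$. The natural strategy is to estimate, for each $a$, the number of labeled outer-string graphs cut-close to $W^3_a$, and to show this count is uniquely maximized at $a = 1/2$. The labeled and unlabeled versions are equivalent for this purpose since $|\OSG_n^L| \le n! \cdot |\OSG_n|$ and $n! = 2^{o(n^2)}$.

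For a lower bound on the count, I would parametrize ``pure'' configurations: labeled graphs on $[n]$ admitting a partition $V(G) = V_1' \sqcup V_1'' \sqcup V_2 \sqcup V_3$ of sizes $(s, t, u, v)$ such that each part is a clique and $V_1'$ has no edges to $V_1''$. Since the quotient graph on these four parts is $K_4 - e$, which is outerplanar, Lemma~\ref{le:osinclusion} guarantees that every such graph is outer-string, regardless of the between-part edges. Hence the count of such labeled graphs is at least
\[
  \binom{n}{s, t, u, v}\, 2^{E(s, t, u, v)}
\]
up to an $O(1)$ overcount, where $E(s, t, u, v) = (s+t)(u+v) + uv$ counts the potential between-part edges. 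Optimizing $E$ subject to $s + t + u + v = n$ pins the leading $n^2$ exponent on the ridge $s + t = n/3$, $u = v = n/3$, where $E = n^2/3$, matching $\col(\OSG) = 3$. Along this ridge $E$ is \emph{independent} of the split $(s, t)$, so the subleading multinomial decides: by Stirling, $\binom{n}{s, t, u, v}$ is sharply peaked at $s = t = n/6$ with Gaussian fluctuations of order $\sqrt{n}$, corresponding to $a = 1/2 + O(n^{-1/2})$.

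The main obstacle is the matching upper bound: one must prove that essentially all of $\OSG_n^L$ arises from near-pure configurations. This amounts to a Kleitman--Rothschild-type stability theorem asserting that almost every $G \in \OSG_n^L$ admits, after modifying $o(n^2)$ edges, such a four-clique partition with one missing between-block. This does not follow from the entropy framework used in Theorem~\ref{th:outerstringlimits} and appears to require substantial new combinatorial input. One possible route is to iterate Lemma~\ref{le:magic} together with a supersaturation estimate bounding how many graphs cut-close to $\maxlimits{\OSG}$ can deviate from the clique-partition structure without producing an induced copy of any of the forbidden graphs $G_4$, $B_4$, $H_4$. Once such a stability theorem is in place, summing the refined counts over $(s, t, u, v)$ and applying Laplace's method yields $a_n \pto 1/2$, as conjectured.
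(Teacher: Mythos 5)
This statement is left as an open conjecture in the paper; the authors give no proof, only the heuristic that $a=1/2$ maximizes the number of admissible vertex partitions, which is exactly the heuristic you formalize in your lower bound. So your proposal must be judged as a proof attempt of an open problem, and as you yourself acknowledge, it is not complete: the matching upper bound is missing, and that is the entire difficulty. Theorem~\ref{th:convinprob} together with Theorem~\ref{th:outerstringlimits} only yields convergence in probability to the \emph{set} $\{W^3_a : a\in[0,1]\}$; the entropy method is blind to factors of size $2^{o(n^2)}$, whereas distinguishing $a=1/2$ from a fixed $a\neq 1/2$ requires controlling the count of outer-string graphs near $W^3_a$ to within a factor $2^{o(n)}$ (the multinomial ridge you identify separates different values of $a$ only by factors of order $2^{cn}$). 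No tool in the paper --- not Lemma~\ref{le:magic}, not supersaturation near $\maxlimits{\OSG}$ --- delivers that precision; a genuine Kleitman--Rothschild-type stability and enumeration theorem for $\OSG$ would be needed, and you do not supply one.

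Two smaller points. First, your lower-bound computation is correct and consistent with the paper: $E(s,t,u,v)=(s+t)(u+v)+uv$ is maximized at $s+t=u=v=n/3$ giving $n^2/3\approx\frac23\binom n2$, matching $\col(\OSG)=3$, and along that ridge the multinomial peaks at $s=t=n/6$, i.e.\ $a=1/2$. Second, even granting a stability theorem saying almost every $G\in\OSG_n^L$ is $o(n^2)$-close to a pure configuration, you would still need to rule out that the number of \emph{impure} graphs clustered near $W^3_a$ for some $a\neq 1/2$ overwhelms the pure count at $a=1/2$ by a factor $2^{\Theta(n)}$ --- edit distance $o(n^2)$ permits $2^{\omega(n)}$ modifications, so closeness in cut norm alone does not reduce the problem to counting pure configurations. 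The proposal is a reasonable research plan, but the conjecture remains unproved.
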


We believe that these results should hold for two reasons.  First,
in the proof of \refT{th:stringlimits}, the value $a=1/2$ gives
the largest number of partitions of the vertex set. 
(And similarly for outer-string graphs, see the proof of
\refT{th:outerstringlimits}.) 
% The reason for the conjectures is that in the
% construction in the proof of \refT{th:stringlimits}, the value $a=1/2$ gives
% the largest number of partitions of the vertex set. 
% (And similarly for outer-string graphs, see the proof of
% \refC{cor:osmaxlimitsinclusion}.) 
% Nevertheless, this is
% only a weak evidence since the graphs
% constructed in that proof are not typical string graphs; most string graphs
% are close to one of these graphs (in the $\deltacut$ metric), but it is
% quite possible that it is easier to find string graphs among the neighboring
% graphs for another value of $a$, for example $a=0$, so that most string
% graphs actually are close to $W^4_0$, say, and not $W^4_{1/2}$. 
Second, the corresponding results for incomparability graphs (at least in the labeled case) and for two-clique graphs are known to hold.  The result for labeled incomparability graphs follows from the corresponding result for partial orders, which was proved in~\cite{KR75} (see also~\cite{BPS96}). We believe that the same result should hold for unlabeled incomparability graphs; it is a folklore result that almost every partial order has trivial automorphism group, and we believe that the same is true of incomparability graphs, but we do not know a proof of this statement.  The result for two-clique graphs is trivial; for a sketch of a nearly identical argument, see~\cite[Example 7.9]{Jan13b}.

\begin{theorem}\label{th:incompconv}
If, for each~$n$, $G_n$ is a uniformly random labeled element
of~$\incomp^L_n$, then $G_n \pto W^2_{1/2}$. \qed
\end{theorem}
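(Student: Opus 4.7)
The plan is to deduce \refT{th:incompconv} from the Kleitman--Rothschild theorem \cite{KR75} on uniform random labeled partial orders. Their result states that, with probability $1-o(1)$, a uniform random labeled partial order $P_n$ on $[n]$ has a \emph{three-level} structure: a partition $[n]=L_1\sqcup L_2\sqcup L_3$ with $|L_1|,|L_3|=(1+o(1))n/4$ and $|L_2|=(1+o(1))n/2$; each $L_i$ is an antichain; every element of $L_1$ lies below every element of $L_3$; and, conditional on the partition, the comparabilities between $L_1$ and $L_2$, and between $L_2$ and $L_3$, are independent with probability $1/2$ each.

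The first step is to identify the incomparability graph of such a $P_n$ as a $G(n,W^2_{1/2})$-random graph. Place $L_1$ into $[0,n/4)$, $L_3$ into $[n/4,n/2)$ and $L_2$ into $[n/2,n)$ under the usual embedding $[n]\to\ooi$. Then the three antichains become three diagonal cliques matching the three $1$-blocks of $W^2_{1/2}$ (two of size $n/4$ inside $I_1$ and one of size $n/2$ inside $I_2$); the complete comparabilities between $L_1$ and $L_3$ become the two $0$-blocks inside $I_1\times I_1$; and the random comparabilities between $L_2$ and $L_1\cup L_3$ become the $1/2$-blocks on $I_1\times I_2$ and $I_2\times I_1$. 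Proposition~11.32 of \cite{LovaszBook} then gives that the incomparability graph of a typical $P_n$ converges almost surely to $W^2_{1/2}$.

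The second step is to transfer this conclusion from uniform partial orders to uniform incomparability graphs. For a three-level poset, the incomparability graph determines the poset up to the reversal that swaps $L_1$ and $L_3$: the complete bipartite block of comparabilities between $L_1$ and $L_3$ forces their common direction once any single one of those edges is oriented, and with probability $1-o(1)$ the random edges to $L_2$ preclude any further transitive orientations. Hence the map from three-level labeled posets on $[n]$ to their incomparability graphs is, with probability $1-o(1)$, exactly $2$-to-$1$. Since both $|\incomp_n^L|$ and the total number of labeled posets equal $2^{(1/2+o(1))\binom{n}{2}}$ by \eqref{eq:incompspeed} and Kleitman--Rothschild, and non-three-level posets form a negligible fraction, it follows that a uniformly random element of $\incomp_n^L$ is, with probability $1-o(1)$, the incomparability graph of a uniformly random three-level labeled poset. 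Combining this with the first step yields $G_n\pto W^2_{1/2}$.

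The main obstacle is the second step. A priori a single incomparability graph could arise from very different numbers of partial orders, biasing the pushforward of the uniform measure; this possibility must be ruled out both by analyzing transitive orientations of the typical Kleitman--Rothschild comparability graph, to establish the approximate $2$-to-$1$ correspondence, and by showing that almost all labeled incomparability graphs indeed come from three-level posets.
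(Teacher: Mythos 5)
Your proposal matches the paper's (essentially unwritten) proof: the paper simply states that the theorem ``follows from the corresponding result for partial orders, which was proved in [KR75] (see also [BPS96])'', and that reduction via the Kleitman--Rothschild three-level structure is exactly what you carry out, with the correct identification of the resulting incomparability graph with $G(n,W^2_{1/2})$. The transfer step you flag as the main obstacle---that the comparability graph of almost every labeled poset has exactly two transitive orientations, so the incomparability map is asymptotically $2$-to-$1$ and the crude speed bound \eqref{eq:incompspeed} can be upgraded to $\abs{\incomp^L_n}=(1-o(1))\abs{\PP^L_n}/2$ for $\PP$ the class of posets---is indeed the real content of the reduction and is established in the literature the paper cites; your outline of it is correct.
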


\begin{theorem}\label{th:2cliqueconv}
If, for each~$n$, $G_n$ is a uniformly random unlabeled element
of~$\mathcal{TCG}_n$, then $G_n \pto W^1_{1/2}$.  If each $G_n$ is a
uniformly random labeled element of~$\mathcal{TCG}^L_n$, then $G_n$ converges in probability to the random graphon~$W^1_T$, where $T
\sim U(0, 1/2)$. \qed
\end{theorem}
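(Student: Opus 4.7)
The plan is to mimic the argument sketched in \cite[Example 7.9]{Jan13b} for the analogous setup. Each two-clique graph on $n$ vertices is determined, up to isomorphism, by the unordered pair of clique sizes $\{a, n-a\}$, so its graphon is equivalent to $W^1_{a/n}$. Modulo the identification $W^1_a \cong W^1_{1-a}$, it is parameterized by the smaller clique fraction $T_n := \min(a, n-a)/n \in [0, 1/2]$, and since $t \mapsto W^1_t$ is continuous (indeed Lipschitz) in the cut metric, convergence in probability of $G_n$ to a (possibly random) graphon of the form $W^1_t$ is equivalent to convergence of the scalar random variable $T_n$ to $t$.

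For the unlabeled part, I would analyze the distribution of $T_n$ induced by the uniform measure on $\mathcal{TCG}_n$ and show that $T_n \pto 1/2$; this yields $G_n \pto W^1_{1/2}$ via the continuity of $t \mapsto W^1_t$. The key input is an enumeration of the isomorphism classes in $\mathcal{TCG}_n$ that correctly weighs each representative so that the induced law on the clique fraction is concentrated.

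For the labeled part, after noting $|\mathcal{TCG}^L_n| = 2^{n-1}$ (each labeled two-clique graph on $[n]$ is determined by an unordered partition $\{A, [n] \setminus A\}$), I would analyze the distribution of $T_n$ under the uniform measure on $\mathcal{TCG}^L_n$ and show that it converges in distribution to $T \sim U(0, 1/2)$. Combining this with the Skorokhod representation theorem and the continuity of $t \mapsto W^1_t$ gives $G_n \pto W^1_T$ on a suitable coupling: on the Skorokhod coupling $T_n \to T$ almost surely, hence $\deltacut(W^1_{T_n}, W^1_T) \to 0$ almost surely, which implies the claimed convergence in probability.

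The main obstacle in each part is the precise asymptotic analysis of the law of $T_n$; as the paper notes, this computation is nearly identical to that in \cite[Example 7.9]{Jan13b}, which I would follow directly, together with the standard fact that the map sending a finite simple graph to its associated graphon (via its adjacency matrix) is close in cut metric to $W^1_{T_n}$ when the graph is a two-clique graph with smaller clique fraction $T_n$.
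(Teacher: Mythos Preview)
Your overall strategy---reduce to the scalar $T_n=\min(a,n-a)/n$ and then invoke the continuity of $t\mapsto W^1_t$ in cut distance---is exactly what the paper intends (it gives no argument beyond calling the result trivial and pointing to Example~7.9 of the cited Janson paper). However, the two concrete convergence claims you make about $T_n$ are reversed, and each would fail if you actually carried out the computation.

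In the \emph{unlabeled} case the isomorphism classes in $\mathcal{TCG}_n$ are in bijection with $\{0,1,\dots,\floor{n/2}\}$ via the smaller clique size~$a$; under the uniform measure each value of $a$ is equally likely, so $T_n=a/n$ converges in distribution to $U(0,1/2)$, \emph{not} to the point mass at $1/2$. In the \emph{labeled} case your own count $|\mathcal{TCG}^L_n|=2^{n-1}$ shows that a uniform element corresponds to a uniform unordered bipartition $\{A,[n]\setminus A\}$; then $|A|$ is a (folded) $\Bin(n,1/2)$ variable, and by the law of large numbers $T_n=\min(|A|,n-|A|)/n\pto 1/2$, \emph{not} $U(0,1/2)$.

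Thus your plan, correctly executed, yields the two conclusions with the labeled and unlabeled cases interchanged: labeled gives the deterministic limit $W^1_{1/2}$, while unlabeled gives the random limit $W^1_T$ with $T\sim U(0,1/2)$. The printed statement appears to have the two cases swapped; your method is sound, but as written the steps ``show that $T_n\pto 1/2$'' (unlabeled) and ``show that $T_n\dto U(0,1/2)$'' (labeled) are both false.
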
 

It is easy to see that the limiting distribution of a uniformly random
string graph, if it exists, determines the limiting distribution of the edge
density of a uniformly random string graph (and similarly for outer-string
graphs).  It also determines the limiting distribution of the proportion of
vertices of degree approximately~$cn$, where $c \in \oi$ is a constant;
it is convenient to state this using the distribution of the degree of a
uniformly random vertex. (See e.g.\ \cite[Section 4]{DHJ}.)
Thus, Conjectures \ref{conj:stringconv} and~\ref{conj:outerstringconv}
imply the following conjectures, including
the somewhat suprising statement that the degree distribution
of a typical string graph (and of a typical outer-string graph) is bimodal. 

Given a graph~$G$ on $n$ vertices, let $X_n$ denote the degree of a uniformly random vertex of~$G$.
%and $k = k(n)$, let $N_k(G)$ denote the number of vertices in $G$ of degree~$k$.

\begin{conjecture}\label{conj:sgedgesdegrees}
If, for each~$n$, $G_n$ is a uniformly random unlabeled element of~$\SG_n$, then
\begin{equation}\label{eq:sgedges}
\dfrac{e(G_n)}{\binom{n}{2}} \pto \dfrac{19}{32}.
\end{equation}
Furthermore, $X_n / n$ converges in distribution to a random variable~$Y$ such that $y \in \set{1/2, 5/8}$ almost surely and such that
\[
\Pr(Y = 1/2) = \dfrac{1}{4} \qquad \text{and} \qquad \Pr(Y = 5/8) = \dfrac{3}{4}.
\]
%\[
%\dfrac{1}{n}N_{5n/8}(G_n) \pto \dfrac{3}{4} \qquad \text{and} \qquad \dfrac{1}{n}N_{n/2}(G_n) \pto \dfrac{1}{4}.
%\]
Moreover, the same conclusions hold if each $G_n$ is a uniformly random labeled element of~$\SG^L_n$.
\end{conjecture}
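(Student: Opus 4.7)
The plan is to derive both claims of Conjecture~\ref{conj:sgedgesdegrees} directly from Conjecture~\ref{conj:stringconv}; granted $G_n \pto W^4_{1/2}$, what remains is a deterministic computation on the graphon $W^4_{1/2}$. I would split the argument into the edge density statement and the degree distribution statement.

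For the edge density, the $K_2$-density $e(G)/\binom{|V(G)|}{2} = \tind(K_2; G)$ is a bounded continuous functional on the space of graph limits, so $G_n \pto W^4_{1/2}$ implies $e(G_n)/\binom{n}{2} \pto \int_{\oi^2} W^4_{1/2}$. Specialising~\eqref{eq:Wdensity} to $k=4$, $a=1/2$ gives
\begin{equation*}
\int_{\oi^2} W^4_{1/2} = \dfrac{(1/2)^2 + (1/2)^2}{16} + \dfrac{3}{16} + \dfrac{12}{32} = \dfrac{1 + 6 + 12}{32} = \dfrac{19}{32},
\end{equation*}
which is exactly~\eqref{eq:sgedges}; the unlabelled and labelled cases follow identically once the corresponding form of Conjecture~\ref{conj:stringconv} is in hand.

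For the degree distribution, I would use the standard fact (see \eg~\cite[Section 4]{DHJ}) that if $G_n \pto W$, then the degree of a uniformly random vertex of $G_n$, normalised by $n$, converges in distribution to $d_W(U) := \int_0^1 W(U,y)\,dy$ where $U$ is uniform on $[0,1]$. A block-by-block calculation on $W^4_{1/2}$ shows the following: for $x \in I_1$, whichever of the two sub-blocks $[0,1/8)$ or $[1/8,1/4)$ contains $x$, the on-diagonal part contributes $1/8$, and each of the three off-diagonal blocks contributes $(1/4)(1/2) = 1/8$, for a total $d_{W^4_{1/2}}(x) = 1/2$; while for $x \in I_j$ with $j \in \set{2,3,4}$, the on-diagonal part contributes $1/4$ and the three off-diagonal blocks contribute $3/8$, giving $5/8$. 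Since $|I_1| = 1/4$ and $|I_2 \cup I_3 \cup I_4| = 3/4$, the random variable $Y := d_{W^4_{1/2}}(U)$ takes value $1/2$ with probability $1/4$ and value $5/8$ with probability $3/4$, as claimed.

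The entire obstacle is therefore the hypothesis itself. Theorem~\ref{th:convinprob} only localises $G_n$ within the cut-distance neighbourhood of the one-parameter family $\set{W^4_a : a \in [0,1]}$, and even showing that $G_n$ converges in distribution along the full sequence (rather than oscillating along subsequences of different $a$) is non-trivial. Pinning the limit to $a = 1/2$ would seem to require a sharper asymptotic count of $\abs{\SG_n}$ stratified by the ``imbalance parameter''~$a$ of the split clique, showing that the contribution to $\abs{\SG_n}$ is concentrated near $a = 1/2$, in the spirit of (though presumably more involved than) the Kleitman--Rothschild enumeration~\cite{KR75} of partial orders.
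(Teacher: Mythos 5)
Your derivation matches the paper's own treatment: the statement is left as a conjecture there precisely because it is only shown to follow from Conjecture~\ref{conj:stringconv} (via the edge-density computation from~\eqref{eq:Wdensity} and the degree computation on $W^4_{1/2}$, exactly as you carry them out), and you correctly identify that the sole remaining obstacle is pinning the limit to $a=1/2$, which neither you nor the paper proves. So this is a correct conditional argument in essentially the same spirit as the paper, not a proof of the (still open) statement itself.
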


\begin{conjecture}\label{conj:osgedgesdegrees}
If, for each~$n$, $G_n$ is a uniformly random unlabeled element of~$\OSG_n$, then
\begin{equation}\label{eq:osgedges}
\dfrac{e(G_n)}{\binom{n}{2}} \pto \dfrac{11}{18}.
\end{equation}
Furthermore, $X_n / n$ converges in distribution to a random variable~$Y$ such that $y \in \set{1/2, 2/3}$ almost surely and such that
\[
\Pr(Y = 1/2) = \dfrac{1}{3} \qquad \text{and} \qquad \Pr(Y = 2/3) = \dfrac{2}{3}.
\]
% \[
% \dfrac{1}{n}N_{2n/3}(G_n) \pto \dfrac{2}{3} \qquad \text{and} \qquad \dfrac{1}{n}N_{n/2}(G_n) \pto \dfrac{1}{3}.
% \]
Moreover, the same conclusions hold if each $G_n$ is a uniformly random labeled element of~$\OSG^L_n$.
\end{conjecture}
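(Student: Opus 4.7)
The plan is to deduce both claims from Conjecture~\ref{conj:outerstringconv}, which asserts that a uniformly random element $G_n$ of $\OSG_n$ (or of $\OSG^L_n$) satisfies $G_n \pto W^3_{1/2}$. Both the scaled edge count and the limiting distribution of the degree of a uniformly random vertex are continuous functionals of the limiting graphon (cf.\ \cite[Section 4]{DHJ}), so granting Conjecture~\ref{conj:outerstringconv} the entire statement reduces to explicit computations inside $W^3_{1/2}$.

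For~\eqref{eq:osgedges}, note that $e(G_n)/\binom{n}{2}$ differs by $O(1/n)$ from the homomorphism density $t(K_2,G_n)$, which is continuous on the space of graph limits; hence $G_n \pto W^3_{1/2}$ implies $e(G_n)/\binom{n}{2} \pto \int_0^1\!\int_0^1 W^3_{1/2}$. Substituting $k=3$ and $a=1/2$ into~\eqref{eq:Wdensity} gives
\[
  \int_0^1\!\!\int_0^1 W^3_{1/2}
  \;=\; \frac{1/4+1/4}{9} + \frac{2}{9} + \frac{1}{3}
  \;=\; \frac{1+4+6}{18}
  \;=\; \frac{11}{18},
\]
as required.

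For the degree distribution, write $d_W(x) := \int_0^1 W(x,y)\,dy$. By the argument in~\cite[Section 4]{DHJ}, if $G_n \pto W$ and $X_n$ is the degree of a uniformly chosen vertex of $G_n$, then $X_n/n \pto d_W(U)$, where $U$ is uniform on $\ooi$. We compute $d_{W^3_{1/2}}$ block by block. For $x \in I_1 = [0,1/3)$, the map $y \mapsto W^3_{1/2}(x,y)$ equals $1$ on exactly one of the two ``clique halves'' $[0,1/6)$ and $[1/6,1/3)$ of $I_1$ (the one containing $x$) and $0$ on the other, so $\int_{I_1} W^3_{1/2}(x,y)\,dy = 1/6$; the off-diagonal contribution is $\tfrac12 \cdot \tfrac23 = 1/3$, for a total of $1/2$. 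For $x \in I_2 \cup I_3$, the diagonal block contributes $\int_{I_j} 1\,dy = 1/3$, and the off-diagonal contribution is again $1/3$, for a total of $2/3$. Since $|I_1| = 1/3$ and $|I_2 \cup I_3| = 2/3$, the pushforward of Lebesgue measure on $\ooi$ under $d_{W^3_{1/2}}$ places mass $1/3$ on $1/2$ and mass $2/3$ on $2/3$, matching the distribution of $Y$ in the statement. The labeled and unlabeled cases are handled identically, since both are covered by Conjecture~\ref{conj:outerstringconv}.

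The only real obstacle is Conjecture~\ref{conj:outerstringconv} itself; once it is granted, the remainder is the two direct computations above, together with the standard fact that edge density and the empirical degree distribution are continuous with respect to the cut-metric convergence underlying $\pto$.
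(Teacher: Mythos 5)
This statement is a conjecture that the paper does not (and cannot) prove unconditionally; the paper merely asserts that it follows from Conjecture~\ref{conj:outerstringconv} via the continuity of the edge density and of the normalized degree distribution under graph-limit convergence, which is precisely the reduction you carry out, and your two computations ($\int_0^1\!\int_0^1 W^3_{1/2}=11/18$ from~\eqref{eq:Wdensity}, and the degree function taking the value $1/2$ on $I_1$ and $2/3$ on $I_2\cup I_3$) are both correct. So your conditional derivation is essentially the same argument the paper intends, with the only cosmetic quibble that $X_n/n$ converges in distribution (not in probability) to $d_{W^3_{1/2}}(U)$.
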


Let us also remark that because $W^4_{1/2}$ has minimum edge density in the
set~$\{W^4_a : a \in \oi\}$ (cf.~\eqref{eq:Wdensity}), the
assertion~\eqref{eq:sgedges} is
actually equivalent to Conjecture \ref{conj:stringconv};
moreover, \eqref{eq:sgedges} is 
also equivalent to saying that the edge
density of a random string graph converges in expectation to~$19/32$.
Similarly, \eqref{eq:osgedges} is equivalent to the analogous statements about
outer-string graphs.

\section{Proof of Lemma \ref{le:magic}}\label{se:removal}

We find it convenient to make two new definitions, closely related to
concepts in Section \ref{se:tools} but from a slightly
different point of view.
Recall \eqref{psiqfw}--\eqref{tindfw}.

\begin{definition}\label{D1}
Let $W$ be a graphon and $G$ a graph with vertex set $[n]$.
  \begin{romenumerate}[-15pt]
  \item 
$W$ is $G$-free if $\tind(G;W)=0$.
By \eqref{tindfw}, this is equivalent to
\begin{equation}
\psiqfw(\xxm)=0  
\qquad\text{for \aex{} $\xxm\in\oi$}.
\end{equation}
\item 
$W$ is completely $G$-free if 
\begin{equation}
\psiqfw(\xxm)=0  
\qquad\text{for {all} $\xxm\in\oi$}.
\end{equation}
In other words, $G$ is not $W$-constructible.
  \end{romenumerate}
\end{definition}
We emphasize that in (ii), the sequence $\xxm$ is completely arbitrary, and
may contain repetitions. In particular, we require $\psiqfw(x,\dots,x)=0$
for every $x\in\oi$, which by \eqref{psiqfw} implies $W(x,x)\in\setoi$ for
every $x\in\oi$.

As said in the introduction, 
by \cite[Theorem 3.3]{Jan13b}, 
if $\PP$ is hereditary, then a graphon $W\in\hcP$
if and only if $\tind(G;W)=0$ for every graph $G\notin\cP$; in other words,
if and only if $W$ is \ffree{} for every $G\notin\cP$.
The following theorem says that this can be strengthened to completely
\ffree.

\begin{theorem}\label{Tfree}
  Let $\cP$ be a hereditary graph property and $\gG$ a graph limit.
Then the following are equivalent.
\begin{romenumerate}
\item 
$\gG\in\hcP$.
\item 
Every graphon representing $\gG$ is \ffree{} for every $G\notin\cP$.
\item 
There exists a graphon representing $\gG$ that is \ffree{} 
for every $G\notin\cP$.
\item 
There exists a graphon representing $\gG$ that is completely \ffree{}
for every $G\notin\cP$.
\end{romenumerate}

Moreover, suppose that $\oi=\bigcup_{k=1}^r A_k$ is a partition into
finitely many measurable sets with positive measures, 
and that we are given a subset 
$\QE\subseteq [r]\times [r]$ and real numbers $a_{kl}\in \oi$ for $(k,l)\in
\QE$. 
Suppose further that 
the properties above hold 
and that there is a graphon $W$ representing $\gG$ such that 
\begin{equation}\label{waij}
  W(x,y)=a_{kl}
\quad\text{for all } (k,l)\in \QE
\text{ and all }(x,y)\in A_k\times A_l.
\end{equation}
Then there exists a graphon $W$ representing $\gG$ that is completely
\ffree{} for every $G\notin\cP$ and is such that \eqref{waij} holds. 
%for all $(x,y)\in A_k\times A_l$.
\end{theorem}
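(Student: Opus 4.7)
Plan. The equivalences (i) $\iff$ (ii) $\iff$ (iii) will follow from the fact that $\tind(G;\cdot)$ depends only on the equivalence class of a graphon, together with the characterization of $\hcP$ for hereditary $\cP$ from~\cite[Theorem~3.3]{Jan13b}: concretely (ii) $\iff$ (iii) is immediate from the definition of \ffree, (iii) $\implies$ (i) is the cited characterization, and (i) $\implies$ (ii) holds because $G_n \to \gG$ with $G_n \in \cP$ forces $\tind(G;\gG)=0$ for every $G \notin \cP$ (since $\cP$ is hereditary, $p(G;G_n)=0$ for all large $n$). The implication (iv) $\implies$ (iii) is trivial, so the substantive content is (iii) $\implies$ (iv) together with the moreover clause.

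For (iii) $\implies$ (iv), the plan is to enumerate the countably many isomorphism classes of graphs not in $\cP$ as $G_1, G_2, \dots$ and to apply Petrov's removal lemma~\cite[Theorem~1]{petrov} to handle them all simultaneously. For each $n$ separately, Petrov's lemma yields an equivalent graphon that is completely $G_n$-free by modifying $W$ on a null subset of $\oi^2$; I plan to use a row/column-localized form, so that the modification for each $G_n$ can be confined to $(N_n\times\oi)\cup(\oi\times N_n)$ for some null set $N_n\subseteq\oi$. Taking the master null set $N^* := \bigcup_n N_n$ (again null) and performing a single modification of $W$ on $(N^*\times\oi)\cup(\oi\times N^*)$ with consistently chosen constant values then yields a graphon $W'$ equivalent to $W$ and simultaneously completely $G_n$-free for every $n$.

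The main obstacle is that complete \ffree-ness is \emph{not} an equivalence-class invariant: a null-set modification curing some $G_{n+1}$ might create a new witnessing vector for some $G_m$ with $m\le n$, so naive iteration of Petrov's lemma is insufficient. To circumvent this, the consistent choice of modification values on the rows and columns of $N^*$ must be verified — using the hereditary structure of $\cP$ — to forbid any new witnessing vector that uses a point of $N^*$. Finally, for the moreover clause, apply the above construction to the given $W$ satisfying \eqref{waij}, and choose the constant replacement values on the rows and columns of $N^*$ inside each rectangle $A_k\times A_l$ with $(k,l)\in\QE$ to equal $a_{kl}$. Because $W$ is already constant $a_{kl}$ on $A_k\times A_l$ by hypothesis, this choice keeps $W'$ both pointwise equal to $a_{kl}$ on these rectangles and consistent with the verification above, so $W'$ satisfies \eqref{waij} while remaining completely \ffree{} for every $G\notin\cP$.
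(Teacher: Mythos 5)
Your treatment of (i)$\iff$(ii)$\iff$(iii) and of (iv)$\implies$(iii) matches the paper and is fine. The problem is (iii)$\implies$(iv), where your plan has a genuine gap at exactly the step you flag as ``must be verified.'' You propose to apply Petrov's removal lemma once per graph $G_n\notin\cP$, confine each modification to a set of the form $(N_n\times\oi)\cup(\oi\times N_n)$, and then argue that a single consistent modification on $N^*=\bigcup_n N_n$ cures all graphs at once. Two things go wrong. First, the ``row/column-localized form'' of Petrov's lemma is not what Petrov proves: his construction redefines $W$ off a set $Y$ of Lebesgue-density-type points of $\oi^2$, and $Y^c$ is a null set with no row/column structure; you would have to prove this localized variant yourself. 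Second, and more seriously, the consistency verification is the entire difficulty of the theorem and you give no argument for it. Modified values at $(x,z)$ introduced to kill a witnessing vector for $G_1$ can perfectly well combine with modified values at $(y,z')$ to create a new witnessing vector for $G_2$, and ``the hereditary structure of $\cP$'' does not obviously forbid this; nothing in your write-up shows how it would.

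The reason the paper's proof avoids this entirely is that Petrov's lemma is already stated for an arbitrary closed subset $M$ of the product space $\oi^{\cI}$, precisely so that one can handle countably many constraints in a \emph{single} application: one sets $M=\bigcap_{G\notin\cP}M_G$, where $M_G$ is the (closed) set of symmetric vectors $(w_{ij})$ with $\prod_{ij\in E(G)}w_{ij}\prod_{ij\notin E(G)}(1-w_{ij})=0$, checks that $(W(x_i,x_j))_{ij}\in M$ for a.e.\ sequence (countably many a.e.\ conditions), and obtains one graphon $W'$ with $(W'(x_i,x_j))_{ij}\in M$ for \emph{all} sequences. No iteration and no consistency check is needed. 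Your plan for the moreover clause has the same weakness: you assume you may freely prescribe the replacement values $a_{kl}$ on the modified set, but Petrov's statement gives no such freedom. The paper instead inspects Petrov's construction to note that $W'=W$ on the density set $Y$, so after rearranging the $A_k$ into intervals the open rectangles $A_k^\circ\times A_l^\circ$ lie in $Y$ and inherit the constant value $a_{kl}$ automatically; the finitely many boundary points are then fixed by a measure-preserving relabeling. To repair your proof you should replace the per-graph iteration by the single application to the intersection $M$, and replace the free choice of values in the moreover clause by an argument about where Petrov's construction leaves $W$ unchanged.
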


Obviously, it suffices to assume that \eqref{waij} holds for \aex{}
$(x,y)\in A_k\times A_l$, since we can begin by redefining $W$ on a set of
measure 0 so that \eqref{waij} holds for all $(x,y)\in A_k\times A_l$.

\begin{proof}
  The equivalences (i)$\iff$(ii)$\iff$(iii) follow, as said above, from 
\cite[Theorem 3.3]{Jan13b} and Definition \ref{D1}.
The implication (iv)$\implies$(iii) is trivial. 
It thus suffices to prove (iii)$\implies$(iv), and the final statement.

For this, suppose that $\gG$ is represented by a graphon $W$ that is
\ffree{}  for every 
$G\notin\cP$. 
Let $\cI$ be the set of all pairs $ij=(i,j)$ of distinct positive
integers, and 
for every labeled graph $G$, define $M_G\subset\oi^\cI$ 
%be the set of sequences $(w_{ij})_{i\neq  j}\in\oi^{\cI}$
by
\begin{multline}\label{mf}
M_G=
\Bigset{(w_{ij})\in\oi^{\cI}:w_{ij}=w_{ji}
\text{ for all $i,j\in\bbN$ ($i\neq j$)}
\\
\text{and }
\prod_{ij\in E(G)} w\qij
\prod_{ij\not\in E(G)}(1- w\qij)=0.
}
\end{multline}
Furthermore, define
\begin{equation}\label{m}
M=\bigcap_{G\notin\cP}M_G.  
\end{equation}
%, taking the intersection over all labeled graphs $G\notin\cP$. 
If $G\notin\cP$, then $W$ is assumed to be \ffree, which by \eqref{psiqfw}
and \eqref{mf} means that the infinite vector 
$(W(x_i,x_j))\qij\in M_G$ for
\aex{} sequence $x_1,x_2,\dots\in\oi$ (with the product measure).
Since the number of labeled graphs is countable, it follows that
$(W(x_i,x_j))\qij\in M$ for
\aex{} $x_1,x_2,\dots\in\oi$. 

By the general removal lemma of Petrov \cite[Theorem 1(2)]{petrov}
(taking $K=X=\oi$  and $k=2$ there),
there exists a graphon $W'$ such that $W'(x,y)=W(x,y)$ \aex{} 
(so $W'$ also represents $\gG$)
and moreover
$(W'(x_i,x_j))\qij\in M$ for
all $x_1,x_2,\dots\in\oi$.
By \eqref{mf}--\eqref{m} and \eqref{psiqfw}, it follows that $W'$ is
completely \ffree{} for every $G\notin\cP$, which proves (iv).

For the final statement, we may, by a suitable measure-preserving bijection
$\oi\to[0,1)$, assume that the graphons are defined on $[0,1)$ and 
that the sets~$A_k$ are half-open intervals $[a_k,b_k)$.
We look into the proof by Petrov  \cite{petrov}; there is defined a set
$Y\subseteq[0,1)^2$ consisting of all pairs $(x_1,x_2)\in[0,1)^2$ 
such that if $U$ is any open
set containing $f(x_1,x_2)$, 
$I_{x,m}=\set{y:\floor{my}=\floor{mx}}$  
(an interval of length~$1/m$ containing $x$),
and $\mu$ is Lebesgue measure in the plane, 
then
\begin{equation}
\lim_{m\to\infty}
m^2 \mu\bigset{(y_1,y_2)\in I_{x_1,m}\times I_{x_2,m}:f(y_1,y_2)\in U}
= 1.
\end{equation}
The construction of $W'$ is such that $W'=W$ on $Y$. 
It follows immediately from~\eqref{waij} that if $(k,l)\in \QE$, then
$A_k^\circ\times A_l^\circ\subseteq Y$,
where $A_k^\circ=(a_k,b_k)$,
and thus 
$W'=W=a_{kl}$ on $A_k^\circ\times A_l^\circ$.
Finally, pick $a_k'\in(a_k,b_k)$, and define $\phi:[0,1)\to[0,1)$
by $\phi(a_k)=a_k'$ and $\phi(x)=x$ for $x\notin\set{a_1,\dots,a_r}$.
Then $W''(x,y):=W'(\phi(x),\phi(y))$ equals $W$ \aex{}, is completely
$G$-free for every $G\notin\PP$, and satisfies \eqref{waij}.
\end{proof}

\begin{remark} 
  We derived the last statement in \refT{Tfree} by using the proof by
Petrov  \cite{petrov}. It is also possible to use only the statement together
  with a more complicated construction, which we sketch here:
Consider the finite set $B=\bigset{\set{i,j}:i,j\in[r]}$ of unordered pairs
or singletons
and the function $g:\oi^2\to B$ defined by $g=\set{i,j}$ on $A_i\times A_j$.
(We consider unordered pairs $\set{i,j}$ since we need $g$ to be symmetric.)
We apply Petrov's theorem to the function $\bigpar{W(x,y),g(x,y)}$
mapping $\oi^2$ into the compact set $K=\oi\times B$, and define 
$M\subset K^\cI=\oi^\cI\times B^\cI$  by the conditions in \eqref{mf} for
all $G\notin\cP$ together with conditions corresponding to 
\begin{romenumerate}
\item 
$g(x,y) \cap g(x,z)\neq\emptyset$;
\item 
$g(x,y)=\set{i,j}$ and $g(y,z)=\set{i,k}$ with $j\neq k$ $\implies$
$g(x,z)=\set{j,k}$;
\item 
$g(x,y)=\set{k,l} \text{ and } (k,l)\in\QE\implies W(x,y)=a_{kl}$.
\end{romenumerate}
Petrov's theorem yields $W'$ and $g'$ satisfying corresponding conditions
everywhere, and it can be seen that 
(at least if $r\ge3$, as we may assume by splitting some~$A_k$)
there exists a partition $\oi=\bigcup_k
A_k'$ such that $g'(x,y)=\set{k,l}$ and thus $W'(x,y)=a_{kl}$ on $A_k'\times
A_l'$, and furthermore $A_k\setminus A_k'$ has measure 0 for each
$k$. 
Finally we redefine $W'(x,y)$ when $x$ or $y\in A_k\setminus A_k'$ for some 
$k$. We omit the details. This argument can also be used when the 
condition $W(x,y)=a_{kl}$ in \eqref{waij} is replaced by $W(x,y)\in A^*_{kl}$
for some compact sets $A^*_{kl}$.
\end{remark}

\begin{proof}[Proof of Lemma \ref{le:magic}]
By \refT{Tfree}, there exists a graphon $W$ representing $\gG$ that is
completely \ffree{} for every $G\notin\cP$. 
If $G$ is $W$-constructible, then
$W$ is not completely $G$-free, so  
$G\in\cP$.

Moreover, if\/ $\Gamma \in \maxlimits{\PP}$  and  $r = \col(\PP)$, 
then \refT{th:maxlimits} shows that there exists 
a representing graphon $W \in R_r$.  If $r = \infty$, i.e., $\PP = \graphs$, then it is easy to see that the constant graphon~$W = 1/2$ possesses the required properties.
Otherwise, we can apply the last statement in \refT{Tfree} with
$A_k=I_k=[(k-1)/r,k/r)$, $\QE=\set{(k,\ell):k\neq\ell}$ and
$a_{k\ell}=1/2$ for $(k,\ell)\in\QE$.
This shows that there exists a representing graphon $W'$ that is completely
\ffree{} for every $G\notin \cP$  with $W'=1/2$ 
on $\bigcup_{i\neq j} I_i\times I_j$.
Furthermore, $W'=W\in\setoi$ \aex{} on each $I_i\times I_i$.
Define $W''$ as the modification of $W'$ obtained by letting $W''(x,y)=0$
if $x,y\in I_i$ for some $i$, and $0<W'(x,y)<1$, and otherwise $W''=W'$.
Then $W''=W'$ a.e., so $W''$ represents $\gG$,
$W''\in R_r$ and
$W''$ is still completely \ffree{} for every $G\notin\cP$. This completes
the proof.
\end{proof}

\section*{Acknowledgments}\label{se:ack}
We would like to thank J\'anos Pach 
and Graham Brightwell for helpful discussions. %conversations.

\bibliographystyle{plain}
\bibliography{graphlimitsbibnew}

\appendix
\section{String Representations and Arcwise-Connected Sets}\label{se:arcwiserep}

We have defined string graphs as graphs having an intersection
representation using curves in the plane.
It is well-known that the string graphs can also be defined in terms of intersection representations of other types of planar sets.
We collect some of these equivalences, and for completeness give a 
proof. 
%(We have not been able to find a detailed proof in the literature.)

\begin{lemma}\label{le:strings}
The following families of subsets of the plane $\mathbb R^2$
define the same class of intersection graphs, i.e., the string graphs:
  \begin{romenumerate}
  \item \label{a:curves}
curves 
(continuous images of $\oi$),
  \item \label{a:simple}
simple curves 
(homeomorphic images of $\oi$),
  \item \label{a:closed}
simple closed curves 
(homeomorphic images of $S^1$),
  \item \label{a:plinear}
piecewise-linear simple closed curves,
  \item \label{a:pathwise}
pathwise-connected sets,
  \item \label{a:arcwise}
arcwise-connected sets,
  \item \label{a:open}
open connected sets,
  \item \label{a:simply}
open simply connected sets.
  \end{romenumerate}
Furthermore, in \ref{a:curves}--\ref{a:plinear} we may assume that the
intersection representation is such that 
such that every pair of strings intersects finitely many times,
such that no three curves cross at a single point,
and such that if two strings cross, then they cross properly.
\end{lemma}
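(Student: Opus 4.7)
The plan is to establish the equivalence of the eight classes by a cycle of implications, many of which reduce to trivial containments. Simple curves are curves, PL simple closed curves are simple closed curves, and open simply connected sets are open connected sets, so the inclusions (ii)$\Rightarrow$(i), (iv)$\Rightarrow$(iii) and (viii)$\Rightarrow$(vii) of intersection-graph classes are immediate. Moreover, every curve is the continuous image of $\oi$ and hence path-connected, every open connected subset of $\mathbb R^2$ is locally path-connected and therefore path-connected, and in any Hausdorff space every path contains an arc between its endpoints (the standard ``arc lemma''), giving (i)$\Rightarrow$(v)$\Leftrightarrow$(vi) and (vii)$\Rightarrow$(vi). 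The only nontrivial task is therefore to convert a representation by arcwise-connected sets (the most general family) into one by PL simple closed curves (the most restrictive one).

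Given a representation $\{A_v\}_{v\in V(G)}$ by arcwise-connected sets, I would first reduce to a representation by finite plane trees: for each edge $uv\in E(G)$, fix a witness point $p_{uv}\in A_u\cap A_v$; inside each $A_v$, connect the finitely many marked points $\{p_{uv}:uv\in E(G)\}$ by arcs lying in $A_v$, and take $T_v\subseteq A_v$ to be their union. Each $T_v$ is then a finite tree in the plane, and the family $\{T_v\}$ is a new intersection representation of $G$ with the same intersection graph. A standard general-position perturbation next makes every $T_v$ piecewise-linear, arranges that any two of them meet in finitely many points, that no three share a point, and that at each intersection the relevant edges cross transversally; this also yields the additional properties claimed in the last sentence of the lemma.

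To pass to representations of types (iii), (iv), (vii) and (viii), I would choose for each $v$ a thin open PL tubular neighborhood $U_v$ of $T_v$, the thickenings simultaneously narrow enough that $U_u\cap U_v\neq\emptyset$ iff $T_u\cap T_v\neq\emptyset$, with overlaps confined to small neighborhoods of the existing crossings. Each $U_v$ is then open and simply connected, being a planar thickening of a contractible tree, which yields (vii) and (viii). Its boundary $\partial U_v$ is a PL simple closed curve, and two such boundaries meet iff the corresponding $U_v$'s overlap, giving (iv) and hence (iii). From each $\partial U_v$ one can finally remove a sufficiently short arc avoiding all intersections with other boundaries to obtain a simple arc homeomorphic to $\oi$, yielding (ii) and hence (i). The transversality, finiteness and no-triple-crossing conditions in the last sentence of the lemma are inherited throughout.

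The main obstacle is the simultaneous thickening step: one must ensure that shrinking $U_v$ to suppress \emph{spurious} intersections does not destroy any \emph{required} intersection. This is handled using the compactness of each $T_v$ and the finiteness of the intersection pattern produced by the general-position perturbation: the thickenings can be constructed locally, in small disjoint neighborhoods of the isolated crossing points, and then extended elsewhere by an $\varepsilon$-neighborhood with $\varepsilon$ smaller than the pairwise distance between non-intersecting components, so that all the required uniformity is controlled by a single small parameter.
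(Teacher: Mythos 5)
Your overall strategy is essentially the paper's: pick witness points $p_{uv}\in A_u\cap A_v$, build a compact one-dimensional skeleton inside each $A_v$ from arcs joining them, use compactness to get a positive separation $\varepsilon$ between non-adjacent skeleta, PL-approximate, pass to general position, and then thicken to obtain the open-set and closed-curve representations. However, there are two concrete gaps. First, your claim that $T_v$ is ``a finite tree in the plane'' is unjustified and, as stated, false: arcs inside an arbitrary arcwise-connected set can intersect one another in very complicated ways (shared sub-arcs, Cantor sets of intersection points), so their union need not be a tree. Even if you build a topological tree by a first-contact argument, the PL approximations of its edge-arcs can cross each other, so the resulting PL object is a connected union of segments with self-crossings. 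Its regular neighborhood $U_v$ then need not be simply connected, and $\partial U_v$ need not be a single simple closed curve --- which is exactly what your passage to (iii), (iv) and (viii) relies on. The paper confronts this head-on: it tubes each PL arc $\tilde C_i^j$ separately, takes the union $D_i=\bigcup_j\partial B_i^j$, observes that $D_i$ is a plane graph all of whose vertices have degree $2$ or $4$, extracts an Eulerian circuit that does not properly cross itself, and locally resolves the self-crossings to produce one PL simple closed curve. Some such step is needed in your argument and is currently missing.

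Second, your general-position perturbation must \emph{preserve} the required intersections, not just tame them. If $T_u$ and $T_v$ meet only at the single point $p_{uv}$, possibly tangentially, then ``arranging that at each intersection the relevant edges cross transversally'' is not achievable by a small generic perturbation: a tangential touching either disappears or must first be converted into a genuine crossing. The paper does this explicitly by attaching a small common line segment at each $p_{ij}$ to both $A_i$ and $A_j$ (so adjacent sets intersect in infinitely many points and cross robustly), and by anchoring the PL approximations at the fixed points $p_i$, $p_{ij}$ so that the subsequent random perturbation is applied only to the interior vertices. Relatedly, for the boundary curves you need an argument that $\partial U_u\cap\partial U_v\neq\emptyset$ whenever $U_u\cap U_v\neq\emptyset$ (thin nested tubes around nearly coincident skeleta could otherwise have disjoint boundaries); the paper sidesteps this by arranging that $p_{ij}$ itself lies on both boundary curves $D_i^j$ and $D_j^i$. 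With these two points repaired, your argument goes through and coincides in substance with the paper's.
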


Recall that a topological space is \emph{pathwise-connected} if any two
points in it can be connected by a curve, and \emph{arcwise-connected} if 
any two points can be connected by a simple curve. These notions are
actually equivalent for Hausdorff spaces (and thus for subsets of the
plane), see \cite[Problem 6.3.12]{Engelking}.
Any open connected set in the plane is pathwise-connected.

\begin{proof}
The implications
\ref{a:plinear}$\implies$\ref{a:closed}$\implies$\ref{a:curves}
$\implies$\ref{a:pathwise},
\ref{a:simple}$\implies$\ref{a:curves},
\ref{a:simple}$\implies$\ref{a:arcwise}$\implies$\ref{a:pathwise},
and 
\ref{a:simply}$\implies$\ref{a:open}$\implies$\ref{a:pathwise}
are trivial. To complete the proof it thus suffices to show that
\ref{a:pathwise}$\implies$\ref{a:simple},\ref{a:plinear} and
\ref{a:plinear}$\implies$\ref{a:simply}.

\ref{a:pathwise}$\implies$%
\ref{a:curves},\ref{a:simple},%
\ref{a:closed},\ref{a:plinear}. 
Suppose that $G$ has an intersection representation consisting of pathwise-connected sets~$\{A_v : v \in V(G)\}$.  
For convenience, we may assume that $G$ has no isolated vertices, since these
can be added at the end (or by trivial modifications in the argument below).
For each~$i$, choose a point~$p_i\in A_i$.  
For each~$j\neq i$ such that $A_i \cap A_j \neq \emptyset$, choose a
point~$p_{ij} \in A_i \cap A_j$.  
By hypothesis, for each~$i$ and for each~$j\neq i$ such that $A_i \cap
A_j \neq \emptyset$, we may choose a curve~$C_i^j \subseteq
A_i$ with endpoints $p_i$ and~$p_{ij}$.  
(Note that this notation is
\emph{not} symmetric: in general, $C_i^j \neq C_j^i$.)  
Define $A_i' = \bigcup_j \Cij$. Then $A_i'\subseteq A_i$, each $A_i'$ is
pathwise-connected and compact, and $\set{A_i'}$ yields an intersection
representation of $G$. Hence, by replacing $A_i$ by $A_i'$, we may assume
that each set $A_i$ is compact, which
implies that there exists~$\varepsilon > 0$ such that if $A_i \cap A_j =
\emptyset$, then $d(A_i, A_j) \geq 3\varepsilon$. 
Furthermore, we may at each point $p_{ij}$ add a small line segment (of length less than~$\varepsilon$ and with $p_{ij}$ as one endpoint) to both $A_i$ and
$A_j$ without creating any new 
intersecting pairs of sets; this guarantees that any pair of sets that
intersects will intersect in infinitely many points.

We now start again with these modified sets $A_i$ and choose new
points~$p_i\in A_i$ and 
$p_{ij} \in A_i \cap A_j$, 
for each~$j\neq i$ such that $A_i \cap A_j \neq \emptyset$.
We choose these points such
that $p_{ij} = p_{ji}$ when these points are defined.
However, we require that $p_{ij}
\neq p_{kl}$ otherwise; that 
$p_i \neq p_{jk}$ for all $i$,~$j$, and~$k$;
and that $p_i\neq p_j$ for $i\neq j$. (Note that this is possible by the
modifications above.)
Once again, choose a curve~$C_i^j \subseteq A_i$ with endpoints $p_i$ and~$p_{ij}$.  
Since $\Cij\subseteq A_i$,
$d(C_i, C_j) \geq \varepsilon$ whenever $A_i\cap A_j=\emptyset$, but
$C_i\cap C_j\supseteq\set{p_{ij}}$ if $A_i\cap A_j\neq \emptyset$.

For each~$i$ and for each~$j\neq i$ such that $A_i \cap A_j \neq \emptyset$, let
$\tilde{C}_i^j$ be a piecewise-linear approximation of~$C_i^j$ 
(with the same endpoints $p_i$ and $p_{ij}$, and at least two linear segments)
to within
distance~$\varepsilon/3$.  (That is, for all~$x \in \tilde{C}_i^j$, there
exists~$y \in C_i^j$ such that $d(x, y) < \varepsilon/3$.)  Furthermore, if
necessary, we may randomly perturb the endpoints of the segments in each
$\tilde{C}_i^j$ (except $p_i$ and $p_{ij}$)
such that every such endpoint is contained in only~one curve~$\tilde{C}_i^j$,
$p_i$ is not contained in any $\tC_k^\ell$ with $k\neq i$,
$p_{ij}$ is contained only in $\tC_i^j$ and $\tC_j^i$,
no point outside of~$\set{p_i}$ lies on three curves~$\tilde{C}_k^\ell$,
% cross at a single point,
and 
all segments have pairwise different directions.
Note that our choice of~$\varepsilon$ means that if $A_i \cap A_j =
\emptyset$, then for any $k$ and~$\ell$, we have $\tilde{C}_i^k \cap
\tilde{C}_j^{\ell} = \emptyset$. 

For each~$i$ and for each~$j$ such that $\tilde{C}_i^j$ is defined, let $N =
N_i^j$ denote the number of segments in the curve~$\tilde{C}_i^j$.  Let
$s_i^j(m)$ denote the $m$th segment in order of increasing distance from
$p_i$.  Furthermore, let $p_i = x_i^j(0)$, \dots, $x_i^j(N) = p_{ij}$ denote
the sequence of endpoints of segments in the curve~$\tilde{C}_i^j$, so that
$x_i^j(m-1)$ and $x_i^j(m)$ are the endpoints of~$s_i^j(m)$.

For each~$i$, for each~$j$ such that $\tilde{C}_i^j$ is defined, and for $m
\in \{0, \dots, N\}$, let $u_i^j(m)$ be a segment of length~$\eta$ centered
at~$x_i^j(m)$, where $\eta<\varepsilon/6$ is a small number.  
If $m \in \{1, \ldots, N-1\}$, then we define $\uij(m)$ to
lie on the angle-bisector of $\sij(m)$~and~$\sij(m+1)$.
We define $\uij(0)$ to be perpendicular to $\sij(1)$ and $\uij(N)$ to be perpendicular
to $\sij(N)$.  For each $m$, let $\xij(m)^+$ be the right-hand endpoint
of~$\uij(m)$ as $\tilde{C}_i^j$ is oriented from $p_i$ to~$p_{ij}$, and let
$\xij(m)^-$ denote the left-hand endpoint.  Let $\Tij(m)$ denote the
trapezoid with vertices $\xij(m-1)^+$, $\xij(m-1)^-$, $\xij(m)^-$,
and~$\xij(m)^+$. 

For each~$i$ and for each~$j$ such that $\tilde{C}_i^j$ is defined, let
$\Bij = \cup_{m=1}^N \Tij(m)$.  Note that the choices of~$\varepsilon$ and
$\eta$ still
ensure that if $A_i \cap A_j = \emptyset$, then for any $k$ and~$\ell$, we
have $B_i^k \cap B_j^{\ell} = \emptyset$.  
If $\eta$ is small enough, then 
the boundary $D_i^j=\partial B_i^j$ is a piecewise-linear simple closed curve,
and no point lies on three of these curves, except that $p_i$ lies on $D_i^j$
for each~$j$ such that this curve is defined;
making a small random perturbation of the segments through $p_i$, we may 
assume that no point at all lies on three of these curves, while
$D_i^j$ still intersects $ D_i^k$ %(close to $p_i$)
for any $j$ and~$k$ such that these curves are defined. Note also that
$p_{ij}\in D_i^j$. 
Furthermore, if necessary after another random perturbation of the segments
through $p_{ij}$,
all segments have pairwise different directions and since each string
consists of finitely many segments, all pairs of curves $D_i^j$ have
finitely many 
intersection points.  This and the fact that every endpoint is contained in
only~one such curve 
(if $\eta$ is small enough)
means that if two curves cross, then they cross properly.

For each~$i$, let $D_i = \bigcup_j D_i^j$, where once again the union is
taken over all~$j$ for which $B_i^j$ is 
defined. Once again, $D_i \cap D_j \neq \emptyset$ if and only if $A_i \cap
A_j \neq \emptyset$.  Each $D_i$ is a union of  polygonal curves, and
$D_i$ is connected. 
Furthermore,
viewing $D_i$ as a graph, each vertex has degree 2 or 4, so the graph is
Eulerian.  Moreover, viewing an Eulerian circuit as a directed curve, it is easy to see that the graph has an Eulerian circuit that does not properly cross itself.
%Moreover it is easy to see that we may find an Eulerian path that turns
%left or right at every vertex of degree 4. 
Hence, making a small
modification at each point where $D_i$ intersects itself, we may replace
$D_i$ by a piecewise-linear simple closed curve $\gamma_i$, without creating
or destroying any other intersections.

This yields an intersection representation~$\set{\gamma_i}$ of $G$
consisting of piecewise-linear simple closed curves (as in \ref{a:plinear}),
which furthermore satisfies the properties in the final part of the
statement.
By choosing a point $q_i$ on each $\gamma_i$, not on any other of the
curves, and deleting a small open interval about $q_i$, we can replace the
closed curves $\gamma_i$ by simple curves $\gamma_i'$ 
as in \ref{a:simple}.
This completes the proof that
\ref{a:pathwise}$\implies$\ref{a:curves},\ref{a:simple},\ref{a:closed},\ref{a:plinear}.

\ref{a:plinear}$\implies$\ref{a:simply}:
Let $\set{A_i}$ be an intersection representation of~$G$ consisting of
piecewise-linear simple curves. Extend the first and last segments a tiny
bit $\eta$ past the original endpoints.
Then, similarly to the proof above, define a region~$B_i\supset A_i$ as the
union 
of small trapezoids of width at most~$\eta$
surrounding the segments in $A_i$.
Then, if $\eta$ is small enough, the interiors $B_i^\circ$ form an
intersection representation of~$G$
consisting of open simply connected sets.
\end{proof}

A similar result holds for outer-string graphs.
We give one version, leaving further variations to the reader.

\begin{lemma}
Let $D$ be a closed disk in the plane $\mathbb R^2$.
The following families of subsets of $D$
define the same class of intersection graphs, i.e., the outer-string graphs:
  \begin{romenumerate}
  \item \label{o:curves}
curves with one endpoint in $\dD$,
  \item \label{o:simple}
simple curves with one endpoint in $\dD$,
  \item \label{o:plinear}
piecewise-linear simple curves
with one endpoint in $\dD$,
  \item \label{o:pathwise}
pathwise-connected sets $A$
with $A\cap\dD\neq\emptyset$,
%  \item \label{o:arcwise} arcwise-connected sets.
%  \item \label{o:open}
%open connected sets  $A\subseteq D^\circ$, 
%with $\overline A\cap\dD\neq\emptyset$.
%  \item \label{o:simply} open simply connected sets.
  \end{romenumerate}
Furthermore, in \ref{o:curves}--\ref{o:plinear} we may assume that 
both endpoints of the strings are on the boundary $\dD$, but that otherwise
the strings lie in the interior $D^\circ$,
and that the
intersection representation is such that 
such that every pair of strings intersects finitely many times,
such that no three curves cross at a single point,
and such that if two strings cross, then they cross properly.
\end{lemma}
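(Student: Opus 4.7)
My approach is to mirror the proof of Lemma~A.1, reducing the outer-string case to the string-graph case by working inside a slightly enlarged disk. The trivial implications \ref{o:plinear}$\Rightarrow$\ref{o:simple}$\Rightarrow$\ref{o:curves}$\Rightarrow$\ref{o:pathwise} are immediate, since each family is a special case of the next (a piecewise-linear simple curve with an endpoint on $\partial D$ is a simple curve is a curve, and a curve with an endpoint on $\partial D$ is in particular a pathwise-connected set meeting $\partial D$). The substantial step is \ref{o:pathwise}$\Rightarrow$\ref{o:plinear}, which will also yield the final strengthened properties.

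Given an intersection representation $\{A_i\}$ in $D$ by pathwise-connected sets with $A_i\cap\partial D\neq\emptyset$, first fix a point $p_i\in A_i\cap\partial D$ for each $i$. I would enlarge $D$ to a slightly larger concentric closed disk $D'$, and for each $i$ attach to $A_i$ a short straight radial segment $E_i$ running outward from $p_i$ to a distinct point $q_i\in\partial D'$. Arranging the $q_i$ at distinct angles makes the segments pairwise disjoint and guarantees that $E_i\cap D=\{p_i\}$ and $E_i\cap A_j=\emptyset$ for $j\neq i$. Thus the enlarged sets $\tilde A_i := A_i\cup E_i$ are pathwise-connected subsets of $D'$ whose intersection graph is still $G$.

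Next I would apply Lemma~\ref{le:strings} to $\{\tilde A_i\}$ inside $D'$ to obtain an intersection representation $\{\gamma_i\}$ by piecewise-linear simple closed curves enjoying the finite-intersection, no-triple-crossing, and proper-crossing properties. By inspecting the construction there, $\gamma_i$ is contained in an arbitrarily thin tubular neighborhood of the tree-like skeleton built inside $\tilde A_i$; by choosing the approximation parameter $\varepsilon$ and the trapezoid width $\eta$ small enough, and (if necessary) after a small perturbation of $\partial D$ to ensure transverse meetings, each $\gamma_i$ crosses $\partial D$ in exactly two points $p_i^+, p_i^-$ lying near $p_i$ along the tail $E_i$. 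Cutting $\gamma_i$ at $p_i^+$ and $p_i^-$ and keeping the portion $\gamma_i'\subset D$ produces a piecewise-linear simple curve with both endpoints on $\partial D$ and interior in $D^\circ$. Because the tails $E_i$ were chosen pairwise disjoint and disjoint from all other $\tilde A_j$, intersections among $\{\gamma_i'\}$ inside $D$ correspond exactly to intersections among $\{A_i\}$, so $\{\gamma_i'\}$ is an outer-string representation of $G$ of the claimed form.

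The main obstacle is the geometric control in the preceding paragraph: ensuring that each $\gamma_i$ crosses $\partial D$ in exactly two points, both near $p_i$. This requires shrinking $\varepsilon$ and $\eta$ in Lemma~\ref{le:strings} enough that the tubular neighborhood of the skeleton is confined very close to the skeleton itself; once that is arranged, the only place $\gamma_i$ can cross $\partial D$ is along the short tail $E_i$, and a generic perturbation handles degeneracies. Everything else (finite intersections, proper crossings, no triple crossings) is inherited directly from Lemma~\ref{le:strings}.
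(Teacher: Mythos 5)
Your overall strategy---enlarge the disk, anchor each set at a boundary point, run the construction of Lemma~\ref{le:strings}, and then open up the resulting closed curves---is the same as the paper's, and the trivial implications are handled correctly. But the step where you extract an open curve with endpoints on the boundary has a genuine gap. First, attaching the tail $E_i$ to $A_i$ and then applying Lemma~\ref{le:strings} as a (semi-)black box does not guarantee that $\gamma_i$ goes anywhere near $\partial D$: the skeleton built inside $\tilde A_i$ consists of curves from a hub point to the intersection points $p_{ij}$ with \emph{other} sets, and since $E_i$ meets nothing and contains no $p_{ij}$, the tail simply never enters the skeleton unless the hub is placed on it --- which you do not arrange ($p_i$ in your write-up is only the attachment point of the tail, not the hub of the star construction). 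The thin tube around the skeleton may then lie entirely in $D^\circ$ and never cross $\partial D$. This is exactly the point the paper handles by choosing the hub $p_i$ of the star construction to lie on $\partial D\cap A_i$.

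Second, even after fixing the hub, your plan to cut $\gamma_i$ where it crosses $\partial D$ and keep ``the portion inside $D$'' needs control of the \emph{number} of crossings with $\partial D$, and the claim that there are exactly two is not justified: $A_i$ is an arbitrary pathwise-connected subset of $D$ and may hug or even lie along $\partial D$ (e.g.\ an arc of $\partial D$), so the skeleton, and hence the tube, can meet $\partial D$ far from $p_i$ no matter how small $\varepsilon$ and $\eta$ are; moreover, near the tail all the curves $C_i^j$ emanating from an outer hub would run in parallel, producing several tube boundaries crossing any intermediate circle. The paper sidesteps all of this: it keeps $\gamma_i$ as a closed curve in the interior of the slightly larger disk $D'$, locates the point $q_i$ of $\gamma_i$ furthest from the centre along the radius through $p_i$ (which exists and is close to $p_i$ because the hub is at $p_i\in\partial D$), deletes a small interval about $q_i$, and attaches two fresh line segments out to $\partial D'$. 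No counting of crossings with any circle is needed. Your argument can be repaired by adopting that opening-up step (or by placing the hub at $q_i$ \emph{and} cutting at a circle chosen after the construction), but as written the reduction does not go through.
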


\begin{proof}
The implications 
\ref{o:plinear}$\implies$\ref{o:simple}$\implies$\ref{o:curves}
$\implies$\ref{o:pathwise} are trivial.

\ref{o:pathwise}$\implies$\ref{o:plinear}:
Suppose that $G$ has an 
intersection representation $\set{A_i}$ with
pathwise-connected sets as in \ref{o:pathwise}, and argue as in the proof of
\refL{le:strings}, now choosing $p_i\in \partial D$. (If necessary we may
add small arcs of $\dD$ to the $A_i$ in order to get all $p_i$ distinct.)
The construction above yields piecewise-linear closed curves $\gamma_i$
lying in the interior of a slightly larger disk $D'$. Let $q_i$ be the point
furthest from 
the center of $D$ where the radius through $p_i$ intersects $\gamma_i$ (such
a point exists, and is close to $p_i$); we
may then remove a small interval about $q_i$ from $\gamma_i$ and replace it
by two line segments to the boundary $\partial D'$.
This yields an intersection representation as in \ref{o:plinear}, which further
satisfies the conditions in the final statement.
\end{proof}

\end{document}